\theoremstyle{plain}      
\newtheorem{step}{Step} 
\newtheorem{thm}{Theorem}[section]     
\newtheorem{theorem}[thm]{Theorem}     
\newtheorem{corollary}[thm]{Corollary}     
\newtheorem{lemma}[thm]{Lemma}     
\newtheorem{prop}[thm]{Proposition}
\newtheorem*{caseI}{Case I}
\newtheorem*{subcaseI.1}{Subcase I.1}
\newtheorem*{subcaseI.2}{Subcase I.2}
\newtheorem*{caseII}{Case II}
\newtheorem*{subcaseII.1}{Subcase II.1}
\newtheorem*{subcaseII.2}{Subcase II.2}
\theoremstyle{remark}
\theoremstyle{definition}
\def\al{{\alpha}}         
\def\de{{\delta}}         
\def\om{{\omega}}         
\def\Om{{\Omega}}         
\def\la{{\lambda}}
\let\La\Lambda         
\def\ka{{c}}        
\def\si{{\sigma}}         
\def\Si{{\Sigma}}         
\def\ga{{\gamma}}         
\def\ep{{\varepsilon}}         
\def\Ga{{\Gamma}}         
\def\th{{\theta}}         
\let\theta\theta
\def\Th{{\Theta}}         
\def\phi{{\varphi}}
\let\pa\partial
\DeclareMathAlphabet{\doba}{U}{msb}{m}{n}
\gdef\mN{\doba{N}}
\gdef\mR{\doba{R}}
\def\grad{{\mathop{\rm grad}}} 
\def\Vol{{\mathop{\rm Vol}}}     
\let\vol\Vol
\def\Scal{{\mathop{\rm Scal}}}     
\def\lamin{\la_{\min}^+} 
\def\lamintilde{\widetilde{\la_{\min}^+}}
\def\taubar{\overline{\tau}}
\def\Cloc{C_{\mathrm{loc}}}
\let\ti\tilde   
\def\OmSpin{{{\Omega}^{\rm spin}_n}}
\def\ddt{\frac{d}{dt}}
\def\d2dt{\frac{d^2}{dt^2}}
\def\eref#1{{\rm (\ref{#1})}}   
\long\def\ignorethis#1{}
\newcommand{\definedas}{\mathrel{\raise.095ex\hbox{:}\mkern-5.2mu=}}
\def\Rmax{R_{\textrm{max}}}
\begin{document}     

\title{Surgery and the spinorial $\tau$-invariant}     
 
\author{Bernd Ammann} 
\address{NWF I -- Mathematik \\        
Universit\"at Regensburg \\
93040 Regensburg \\  
Germany}
\email{bernd.ammann@mathematik.uni-regensburg.de}

\author{Mattias Dahl} 
\address{Institutionen f\"or Matematik \\
Kungliga Tekniska H\"ogskolan \\
100 44 Stockholm \\
Sweden}
\email{dahl@math.kth.se}

\author{Emmanuel Humbert} 
\address{Institut \'Elie Cartan, BP 239 \\ 
Universit\'e de Nancy 1 \\
54506 Vandoeuvre-l\`es-Nancy Cedex \\ 
France}
\email{humbert@iecn.u-nancy.fr}

\begin{abstract}
We associate to a compact spin manifold $M$ a real-valued invariant
$\tau(M)$ by taking the supremum over all conformal classes of the
infimum inside each conformal class of the first positive Dirac
eigenvalue, when the metrics are normalized to unit volume. This
invariant is a spinorial analogue of Schoen's $\sigma$-constant, also
known as the smooth Yamabe invariant.

We prove that if $N$ is obtained from $M$ by surgery of codimension at
least $2$ then $\tau(N) \geq \min\{\tau(M),\La_n\}$, where $\La_n$ is
a positive constant depending only on $n = \dim M$. Various
topological conclusions can be drawn, in particular that $\tau$ is a
spin-bordism invariant below $\La_n$. Also, below $\La_n$ the values
of $\tau$ cannot accumulate from above when varied over all manifolds
of dimension $n$.
\end{abstract}     

\subjclass[2000]{53C27 (Primary) 55N22, 57R65 (Secondary)}

\date{August 26, 2008}

\keywords{Dirac operator, eigenvalue, surgery}

\thanks{We thank Victor Nistor for helpful discussions about Sobolev 
inequalities on non-compact manifolds. Also Mattias Dahl thanks the 
Institut \'Elie Cartan, Nancy, for its kind hospitality and support 
during visits when this work was initiated.}

\maketitle     


\tableofcontents


\section{Introduction}

\subsection{Spin manifolds and Dirac operators}

Let $M$ be a compact $n$-dimensional spin manifold without boundary. 
We will always consider spin manifolds as equipped with an orientation 
and a spin structure. The existence of these structures is
equivalent to the vanishing of the first and the second Stiefel-Whitney
classes. 

As explained in \cite{lawson.michelsohn:89, friedrich:00, hijazi:01}
one associates the spinor bundle $\Si_\rho^g M$ to 
the spin structure, together with a Riemannian metric $g$ on $M$ and a
complex irreducible representation $\rho$ of the Clifford algebra over
$\mR^n$. The Dirac operator $D^g_\rho$ is a self-adjoint elliptic
first order differential operator acting on smooth sections of the
spinor bundle $\Si^g M$. It has a spectrum consisting only of real
eigenvalues of finite multiplicity. The spectrum depends on the
choice of spin structure, on the metric~$g$, and a priori on the
representation $\rho$. In even dimensions $n$, the representation
$\rho$ is unique. In odd dimensions there are two choices
$\rho^+$ and $\rho^-$. Exchanging the representation results in
reversing the spectrum, that is if $\lambda$ is an eigenvalue of
$D^g_{\rho^+}$ then $-\lambda$ is an eigenvalue of
$D^g_{\rho^-}$ with the same multiplicity, and vice versa. This has
no effect if $n\equiv 1\mod 4$ since the real/quaternionic structure
on $\Si_\rho^g M $ anti-commutes with the Dirac operator and the
spectrum therefore is symmetric, see \cite[Section 1.7]{friedrich:00}.
However, in dimensions $n\equiv 3 \mod 4$ the choice of $\rho$
matters.  In this case we choose the representation such that Clifford
multiplication of $e_1\cdot e_2 \cdots e_n$ acts as the identity,
where $e_1,\ldots,e_n$ denotes the standard basis of $\mR^n$. We thus
can and will suppress $\rho$ in the notation.

\subsection{The $\tau$-invariant}

We denote by $\la_1^+(D^{\tilde{g}})$ the first non-negative
eigenvalue of $D^{\tilde{g}}$. For a metric $g$ on $M$ we 
define 
$$
\lamin(M,g) 
\definedas 
\inf \la_1^+(D^{\tilde{g}}) \vol(M,\tilde{g})^{1/n},
$$
where the infimum is taken over all metrics $\tilde{g}$ conformal to 
$g$. Further we define
$$
\tau^+(M) \definedas \sup \lamin(M,g),
$$
where the supremum is taken over all metrics $g$ on $M$. This yields
an invariant of the spin manifold $M$. Observe that we do not 
require $M$ to be connected. 

We begin by noting some simple properties of the invariant $\tau^+$.
Let $(S^n,\sigma^n)$ denote the unit sphere with its standard metric.
We have 
$$
\lamin(S^n,\sigma^n) = \frac{n}{2} \omega_n^{1/n},
$$
where $\omega_n$ is the volume of $(S^n,\sigma^n)$. Moreover it is 
shown in \cite{ammann:03,ammann.grosjean.humbert.morel:p07} that 
$$
\lamin(M,g) \leq \lamin(S^n,\sigma^n)
$$
for any compact Riemannian spin manifold $(M,g)$. Together with
Inequality (\ref{ineq.sup.hijazi}) below we get 
$$
\tau^+(S^n) = \lamin(S^n,\sigma^n) =  \frac{n}{2} \omega_n^{1/n},
$$
so for all compact spin manifolds $M$ we have 
$$
\tau^+(M) \leq \tau^+(S^n).
$$ 
If the kernel of $D^g$ is non-trivial, then obviously $\lamin(M,g)=0$.
Conversely, it was shown in \cite{ammann:03} that if the kernel of
$D^g$ is trivial, that is if $D^g$ is invertible, then
$\lamin(M,g)>0$. It follows that $\tau^+(M)>0$ if and only if there is
a metric $g$ on $M$ for which the Dirac operator $D^g$ is
invertible. It is a further fact that $\tau^+(M)=0$ precisely when
$\al(M)\neq 0$, where $\al(M)$ is the alpha-invariant which equals the
index of the Dirac operator for any metric on $M$, see
\cite{ammann.dahl.humbert:p06}.

For compact Riemannian spin manifolds $(M_1,g_1)$ and $(M_2,g_2)$ we
denote by $M_1 \amalg M_2$ the disjoint union of $M_1$ and $M_2$  
with the natural metric $g_1 \amalg g_2$. It is not difficult
to see that    
$$
\lamin(M_1 \amalg M_2, g_1 \amalg g_2) 
= 
\min \{ \lamin(M_1,g_1),\lamin(M_2,g_2) \}.
$$
This implies 
$$
\tau^+(M_1  \amalg M_2) 
= 
\min \{\tau^+(M_1),\tau^+(M_2) \}.
$$
We denote by $-M$ the manifold $M$ equipped with the opposite
orientation. The Dirac operator changes sign when the orientation of
the manifold is reversed. If $M$ has dimension ${}\not\equiv 3\mod 4$
this does not change the first positive eigenvalue of $D$ since the
spectrum is symmetric, so we then have $\lamin(-M,g) = \lamin(M,g)$
and $\tau^+(-M) = \tau^+(M)$. For manifolds $M$ of dimension 
${}\equiv 3\mod 4$ we define $\la_{\rm min}^-(M,g)$ and $\tau^-(M)$
similar to $\lamin(M,g)$ and $\tau^+(M)$ by replacing $\la_1^+$ by the
absolute value of the first non-positive eigenvalue.  We then have
$\lamin(-M,g) = \la_{\rm min}^-(M,g)$ and $\tau^+(-M) = \tau^-(M)$.

\subsection{The $\sigma$-constant}

The $\tau$-invariant is a spinorial analogue of the $\sigma$-constant
\cite{kobayashi:87,schoen:89} which is defined for a compact manifold
$M$ by
\begin{equation*} 
\si(M)
\definedas \sup \inf 
\frac{\int\Scal^{\ti g} \,dv^{\ti g}}
{\Vol(M,{\ti g})^{\frac{n-2}{n}}} ,
\end{equation*}
where the infimum runs over all metrics $\ti g$ in a conformal class
and the supremum runs over all conformal classes. $\si(M)$ is also
known as the smooth Yamabe invariant of $M$. When $\si(M)$ is positive
it can be computed in a way analogous to $\tau^+(M)$ using the
lowest eigenvalue of the conformal Laplacian 
$L^g = 4 \frac{n-1}{n-2} \Delta^g + \Scal^g$ instead of
$\la^+_1(D^g)$. Hijazi's inequality \cite{hijazi:86,hijazi:91} gives a
comparison of the two invariants,
\begin{equation} \label{ineq.sup.hijazi}
\tau^\pm(M)^2 \geq \frac{n}{4(n-1)} \sigma(M).
\end{equation}
For $M=S^n$ equality is attained in \eref{ineq.sup.hijazi}.
Upper bounds for $\tau^\pm(M)$ may help to determine the 
$\sigma$-constant.

Surgery formulas for the $\sigma$-constant analogous to those obtained 
in this paper have been proved in~\cite{ammann.dahl.humbert:p08a}.

\subsection{Geometric constants}

We are going to prove a surgery formula for the invariant $\tau^+$. 
This formula involves geometric constants $\La_{n,k}$ which 
we now define.

For a complete spin manifold $(V,g)$ we set
$$ 
\lamintilde(V,g) 
\definedas
\inf \la\in [0,\infty] ,
$$
where the infimum is taken over all $\la\in (0,\infty)$ for which
there is a non-zero spinor field 
$\phi\in L^\infty(V) \cap L^2(V) \cap \Cloc^1(V)$ such that 
$\|\phi\|_{L^\frac{2n}{n-1}(V)} \leq 1$, and
\begin{equation}  \label{eq.conf} 
D^g \phi
=
\la |\phi|^\frac{2}{n-1} \phi.
\end{equation} 
If there are no such solutions of (\ref{eq.conf}) on $V$ then
$\lamintilde(V,g)=\infty$. 

For a positive integer $k$ we let $\xi^k$ denote the Euclidean metric 
on $\mR^k$. For $\ka \in \mR$ we denote by
$\eta^{k+1}_\ka \definedas e^{2 \ka t} \xi^k + dt^2$ the hyperbolic
metric of sectional curvature $-\ka^2$ on $\mR^{k+1}$. As above 
$\sigma^{n-k-1}$ denotes the metric of sectional curvature $1$ on 
$S^{n-k-1}$. We define the product metric 
$$
G_\ka 
\definedas 
\eta^{k+1}_\ka + \sigma^{n-k-1}
$$
on $\mR^{k+1} \times S^{n-k-1}$, and we define our geometric
constants as  
$$
\Lambda_{n,k} 
\definedas
\inf_{\ka \in [-1,1]} \lamintilde(\mR^{k+1} \times S^{n-k-1}, G_\ka ),
$$
and
$$
\La_{n}\definedas\min_{0\leq k\leq n-2} \La_{n,k}. 
$$
Note that the infimum could as well be taken over $\ka \in [0,1]$
since $G_{\ka}$ and $G_{-\ka}$ are isometric. It is
easy to see that $\La_{n,0} = \lamin(S^n,\si^n)$.  For $k>0$ we are
not able to compute these constants, but at least we can show that
they are positive.
\begin{theorem} \label{lamin_hyperb}
For $0 \leq k \leq n-2$ we have $\Lambda_{n,k}>0$.
\end{theorem}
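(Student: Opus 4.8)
The plan is to show that a non-trivial solution $\phi$ of $D^{G_\ka}\phi = \la|\phi|^{2/(n-1)}\phi$ on $\mR^{k+1}\times S^{n-k-1}$ with $\|\phi\|_{L^{2n/(n-1)}}\leq 1$ cannot exist when $\la$ is too small, by deriving a contradiction with a Sobolev-type inequality for the Dirac operator on this non-compact product manifold. The first step is to establish such a Sobolev inequality: there is a constant $C = C(n,k) > 0$, uniform in $\ka\in[-1,1]$, such that
$$
\|\psi\|_{L^{2n/(n-1)}(\mR^{k+1}\times S^{n-k-1}, G_\ka)}
\leq
C\,\|D^{G_\ka}\psi\|_{L^{2n/(n+1)}(\mR^{k+1}\times S^{n-k-1}, G_\ka)}
$$
for all compactly supported smooth spinors $\psi$ (equivalently, the inverse of $D^{G_\ka}$ maps $L^{2n/(n+1)}$ to $L^{2n/(n-1)}$ boundedly). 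Here the key point is uniformity in $\ka$: the metrics $G_\ka$ are complete with bounded geometry, have a uniform positive lower bound on the injectivity radius, and the curvature is controlled independently of $\ka\in[-1,1]$; moreover the Dirac operator on $(\mR^{k+1}\times S^{n-k-1},G_\ka)$ is invertible (for $\ka\neq 0$ one uses that the hyperbolic factor contributes a spectral gap, and for $\ka = 0$ the $S^{n-k-1}$ factor together with a Fourier/separation-of-variables argument on the Euclidean factor gives invertibility since $n-k-1\geq 1$). A compactness-of-parameter argument then upgrades the individual estimates to a uniform one over the compact interval $[-1,1]$.

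Given the Sobolev inequality, the second step is the standard Hölder interpolation: if $D^g\phi = \la|\phi|^{2/(n-1)}\phi$ then $\|D^g\phi\|_{L^{2n/(n+1)}} = \la\,\big\||\phi|^{2/(n-1)}\phi\big\|_{L^{2n/(n+1)}} = \la\,\|\phi\|_{L^{2n/(n-1)}}^{(n+1)/(n-1)}$, so combining with the Sobolev inequality yields
$$
\|\phi\|_{L^{2n/(n-1)}}
\leq
C\la\,\|\phi\|_{L^{2n/(n-1)}}^{(n+1)/(n-1)},
$$
hence $1 \leq C\la\,\|\phi\|_{L^{2n/(n-1)}}^{2/(n-1)} \leq C\la$ using $\|\phi\|_{L^{2n/(n-1)}}\leq 1$ and $\phi\neq 0$. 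Therefore $\la\geq 1/C$, which forces $\lamintilde(\mR^{k+1}\times S^{n-k-1},G_\ka)\geq 1/C$, and taking the infimum over $\ka\in[-1,1]$ gives $\La_{n,k}\geq 1/C > 0$. A technical wrinkle is that the test spinor $\phi$ in the definition of $\lamintilde$ lies only in $L^\infty\cap L^2\cap \Cloc^1$ rather than being compactly supported, so one must justify applying the Sobolev inequality to it by a cutoff/approximation argument; this is routine given the $L^2\cap L^\infty$ control and elliptic regularity, but should be written out.

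The main obstacle is the first step — proving the Sobolev inequality with a constant uniform in $\ka$. Invertibility of $D^{G_\ka}$ for each fixed $\ka$ is not hard, but the boundedness of $(D^{G_\ka})^{-1}\colon L^{2n/(n+1)}\to L^{2n/(n-1)}$ on a non-compact manifold, and especially the uniformity of the operator norm as $\ka$ ranges over $[-1,1]$ (including the degeneration $\ka\to 0$, where $\eta^{k+1}_\ka$ flattens to the Euclidean $\xi^{k+1}$), requires care: one needs either an explicit parametrix construction exploiting the product and homogeneous structure, or a bounded-geometry argument à la those for Sobolev embeddings on manifolds with uniformly bounded curvature and injectivity radius bounded below, combined with a direct check that no $L^2$-normalizable near-kernel develops as $\ka\to 0$. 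I expect most of the work of the proof to be concentrated here.
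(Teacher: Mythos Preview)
Your strategy is plausible in outline, but it is \emph{not} the paper's route, and the step you correctly flag as ``the main obstacle'' --- a uniform $L^{2n/(n+1)}\!\to\! L^{2n/(n-1)}$ bound for $(D^{G_\ka})^{-1}$ over $\ka\in[-1,1]$ --- is genuinely nontrivial on these non-compact manifolds and would require substantial analytic work (bounded-geometry $L^p$ theory for Dirac plus a careful uniformity argument as $\ka\to 0$). The paper avoids this entirely.

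Here is what the paper does instead. It argues by contradiction and blow-up. Suppose $\La_{n,k}=0$, so there are $\ka_i\in[-1,1]$ and nonzero $\psi_i\in L^\infty\cap L^2\cap C^1_{\mathrm{loc}}$ solving $D^{G_{\ka_i}}\psi_i=\la_i|\psi_i|^{2/(n-1)}\psi_i$ with $\|\psi_i\|_{L^{2n/(n-1)}}\le 1$ and $\la_i\to 0$. The only analytic input needed is the elementary $L^2$ spectral bound coming from the sphere factor (Lemma~\ref{sphere_bundle}):
\[
\int |D\psi|^2 \;\ge\; \tfrac{(n-k-1)^2}{4}\int |\psi|^2.
\]
Plugging the equation into this gives $(n-k-1)^2/4\le \la_i^2\,m_i^{4/(n-1)}$ with $m_i=\|\psi_i\|_{L^\infty}$, so $m_i\to\infty$. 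Using the homogeneity of $(\mR^{k+1}\times S^{n-k-1},G_{\ka_i})$ one recentres so that $|\psi_i(P)|>m_i/2$ at a fixed point, rescales the metric by $m_i^{4/(n-1)}$ (Lemma~\ref{diffeom}) so that the rescaled metrics converge to the flat $\xi^n$, and passes to a limit (Lemma~\ref{lim_sol}) to produce a nonzero $L^{2n/(n-1)}$ spinor $\psi$ on $\mR^n$ with $D^{\xi^n}\psi=0$. Conformal change to $S^n$ and removal of the point singularity then give $0\ge\lamin(S^n,\si^n)$, a contradiction.

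In short: you try to prove a strong uniform Sobolev inequality directly; the paper replaces that by the trivial $L^2$ gap from the $S^{n-k-1}$ factor plus a rescaling/compactness argument that reduces everything to the known nonexistence of nontrivial $L^{2n/(n-1)}$ harmonic spinors on $\mR^n$. Your approach, if carried out, would give a quantitative lower bound $\La_{n,k}\ge 1/C$; the paper's approach is softer but much easier to make rigorous with the tools already developed in the paper.
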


\subsection{Joining manifolds}

We are going to study the behaviour of $\tau^+$ when two compact
Riemannian spin manifolds are joined along a common submanifold.  
Let~$M_1$ and $M_2$ be spin manifolds of dimension $n$ and let $N$ be
obtained by joining $M_1$ and $M_2$ along a common submanifold as
described in Section \ref{joining}. The manifold $N$ is spin and from
the construction there is a natural choice of spin structure on $N$.
The following results make it possible to compare 
$\tau^+(M_1 \amalg M_2)$ and $\tau^+(N)$.
\begin{theorem} \label{main}
Let $(M_1,g_1)$ and $(M_2,g_2)$ be compact Riemannian spin manifolds
of dimension $n$ for which both $D^{g_1}$ and $D^{g_2}$ have trivial
kernel. Let $W$ be a compact spin manifold of dimension $k$
embedded into $M_1$ and $M_2$ with trivializations of the
corresponding normal bundles given. Assume that $0 \leq k \leq n-2$,
and let $N$ be obtained by joining $M_1$ and $M_2$ along $W$. 
 Then there is a family of metrics $g_\th$, 
$\th\in(0,\th_0)$ on $N$
satisfying
\begin{eqnarray*}
\min \{\lamin(M_1 \amalg M_2, g_1\amalg g_2),\La_{n,k} \} 
&\leq &
\liminf_{\th\to 0}  \lamin(N,g_\th) )\\
 &\leq &\limsup_{\th\to 0}  \lamin(N,g_\th)\\
& \leq&  
\lamin(M_1 \amalg M_2, g_1\amalg g_2 ).
\end{eqnarray*}
\end{theorem}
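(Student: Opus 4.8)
The plan is to prove the upper and lower bounds for $\lamin(N,g_\theta)$ separately, since they are of a very different nature. For the upper bound $\limsup_{\theta\to 0}\lamin(N,g_\theta)\le\lamin(M_1\amalg M_2,g_1\amalg g_2)$, the idea is to start from a test metric $\tilde g$ conformal to $g_1\amalg g_2$ with $\lambda_1^+(D^{\tilde g})\vol(M_1\amalg M_2,\tilde g)^{1/n}$ close to $\lamin(M_1\amalg M_2,g_1\amalg g_2)$, together with a corresponding (near-)eigenspinor $\phi$. One then transplants this data to $N$: the metrics $g_\theta$ are constructed so that, away from a $\theta$-small neighbourhood of $W$, $N$ looks like $M_1\amalg M_2$ (with two thin necks glued in), so cutting off $\phi$ with a logarithmic cutoff function supported away from the gluing region produces a test spinor on $(N,g_\theta)$ whose Rayleigh-type quotient converges to that of $\phi$ as $\theta\to0$. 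Since $D^{g_1}$ and $D^{g_2}$ are assumed invertible, the min-max characterisation of $\lambda_1^+$ via the quadratic form (together with the fact that on $N$ the relevant eigenvalue stays bounded away from $0$ for small $\theta$, or else the bound is trivial) then gives the desired limsup inequality after taking the conformal normalisation into account.

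For the lower bound $\min\{\lamin(M_1\amalg M_2,g_1\amalg g_2),\Lambda_{n,k}\}\le\liminf_{\theta\to0}\lamin(N,g_\theta)$, the natural approach is a contradiction/blow-up argument. Suppose the liminf is some $\lambda<\min\{\lamin(M_1\amalg M_2,g_1\amalg g_2),\Lambda_{n,k}\}$. By the conformal reformulation of the variational problem (the infimum in $\lamin$ is realised, up to regularity, by a spinor solving the conformally-covariant Dirac equation $D^{\hat g_\theta}\psi_\theta=\lambda_\theta|\psi_\theta|^{2/(n-1)}\psi_\theta$ with $\|\psi_\theta\|_{L^{2n/(n-1)}}=1$, $\lambda_\theta\to\lambda$), one obtains a sequence of normalised solutions on $(N,g_\theta)$. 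One then studies where the $L^{2n/(n-1)}$-mass of $\psi_\theta$ concentrates as $\theta\to0$. If a definite amount of mass stays in the part of $N$ isometric to a fixed region of $M_1$ or $M_2$, a standard limiting argument produces a nonzero solution on $M_1$ or $M_2$ contradicting $\lambda<\lamin(M_i,g_i)\le\lamin(M_i\amalg M_j,\cdot)$ (using invertibility of $D^{g_i}$ to rule out mass escaping to zero). If instead the mass concentrates near $W$, one rescales around the gluing region; because the metrics $g_\theta$ near $W$ are modelled (after rescaling) on the product $G_\kappa=\eta^{k+1}_\kappa+\sigma^{n-k-1}$ on $\mathbb R^{k+1}\times S^{n-k-1}$ for some $\kappa\in[-1,1]$, the rescaled spinors subconverge in $\Cloc^1$ to a nonzero $\phi\in L^\infty\cap L^2\cap\Cloc^1$ on $(\mathbb R^{k+1}\times S^{n-k-1},G_\kappa)$ solving \eref{eq.conf} with parameter $\lambda$, hence $\lambda\ge\lamintilde(\mathbb R^{k+1}\times S^{n-k-1},G_\kappa)\ge\Lambda_{n,k}$, a contradiction. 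Theorem \ref{lamin_hyperb} guarantees that $\Lambda_{n,k}>0$, which is what makes the model term in the conclusion nontrivial.

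The technical heart — and the step I expect to be the main obstacle — is the concentration analysis in the lower bound: proving that the normalised solutions $\psi_\theta$ cannot ``leak'' mass in a way that escapes both limiting pictures. Concretely, one needs uniform elliptic estimates for the Dirac equation on the degenerating family $(N,g_\theta)$, a removable-singularity / no-small-mass-concentration lemma near $W$ (so that a nontrivial limit genuinely appears after rescaling rather than the mass vanishing into ever-smaller scales with no limit), and control on the region interpolating between the ``standard'' part and the model neck, where the metric is neither $g_i$ nor exactly $G_\kappa$. This is where the hypothesis $k\le n-2$ (codimension $\ge2$) enters decisively: it is needed both for the capacity/cutoff argument in the upper bound (a logarithmic cutoff near a codimension-$\ge2$ set has small $W^{1,2}$-type energy) and for the model space $\mathbb R^{k+1}\times S^{n-k-1}$ with $n-k-1\ge1$ to carry a nondegenerate round sphere factor. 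A secondary technical point is checking that the spin structures and normal-bundle trivializations match up so that the Dirac operators and their eigenspinors are compatible across all three manifolds; this is bookkeeping but must be done carefully.
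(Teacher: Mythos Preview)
Your overall strategy matches the paper's closely: the upper bound via cutoff test spinors and conformal invariance of the functional $J^g$, and the lower bound via a blow-up analysis of the minimizers $\psi_\theta$ of $J^{g_\theta}$ produced by Theorem~\ref{attained}. However, your dichotomy for the lower bound is incomplete, and this is a genuine gap rather than a missing detail.

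You split according to \emph{where the $L^{2n/(n-1)}$-mass concentrates} (away from $W$ versus near $W$). The paper's primary split is different: first according to whether $m_\theta \definedas \|\psi_\theta\|_{L^\infty(N)}$ stays bounded (Case~II) or not (Case~I). Your two cases correspond only to the paper's Subcases~II.1 and~II.2. In Case~I, where $m_\theta\to\infty$, the ``standard limiting argument'' you invoke to produce a nonzero limit on $M_i$ fails: without an $L^\infty$ bound there is no $C^0$-compactness, and the mass may concentrate at a single point (either away from or near $W$). The paper handles this by rescaling by $m_\theta^{2/(n-1)}$ around a maximum point, obtaining a nontrivial limit solution on $(\mR^n,\xi^n)$; after conformal transplantation to $S^n$ and removal of the point singularity (Corollary~\ref{removal}), this forces $\bar\lambda\geq\lamin(S^n,\sigma^n)\geq\lambda$. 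Without this case your argument has a hole: nothing in your outline excludes point-concentration with $L^\infty$ blow-up.

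A second point you understate: in the neck-concentration case you assert that the rescaled limit $\phi$ on $(\mR^{k+1}\times S^{n-k-1},G_\kappa)$ lies in $L^2$. This is not automatic from the $L^{2n/(n-1)}$ normalization and is exactly what is needed to invoke the definition of $\Lambda_{n,k}$. The paper obtains it via a nontrivial use of Lemma~\ref{sphere_bundle} (the spectral gap $(n-k-1)^2/4$ on the $S^{n-k-1}$ factor, which requires $k\leq n-2$) to bound $\int_{U^N(a)}|\psi_\theta|^2\,dv^{g_\theta}$ uniformly in $\theta$ before passing to the limit. The same lemma is also what produces the uniform \emph{lower} bound on $\|\psi_\theta\|_{L^\infty}$ along a suitable sequence $a_\theta\to 0$ (Step~2 in the paper), ensuring the limit is nonzero. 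You should build this ingredient into your outline explicitly. Finally, a minor remark: an ordinary cutoff (not a logarithmic one) already suffices for the upper bound, since the relevant gradient term is measured in $L^{2n/(n+1)}$ and codimension $\geq 2$ makes the tubular volume decay fast enough.
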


\noindent Taking the supremum over all metrics on $M_1 \amalg M_2$ the first
inequality gives us the following corollary. 
\begin{corollary} \label{firstcortomain}
In the situation of Theorem \ref{main} we have 
$$
\tau^+(N)
\geq 
\min \{ \tau^+(M_1 \amalg M_2),\Lambda_{n,k} \}
\geq 
\min \{ \tau^+(M_1), \tau^+(M_2),\Lambda_{n} \}.
$$
\end{corollary}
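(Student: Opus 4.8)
The statement bundles two inequalities, and the plan is to treat them separately. The second inequality, $\min\{\tau^+(M_1 \amalg M_2),\La_{n,k}\} \geq \min\{\tau^+(M_1),\tau^+(M_2),\La_{n}\}$, is purely formal: the disjoint union identity $\tau^+(M_1 \amalg M_2) = \min\{\tau^+(M_1),\tau^+(M_2)\}$ recorded in the introduction lets us rewrite the left-hand side as $\min\{\tau^+(M_1),\tau^+(M_2),\La_{n,k}\}$, and this is $\geq \min\{\tau^+(M_1),\tau^+(M_2),\La_{n}\}$ because $\La_{n,k}\geq\La_n$ by the very definition $\La_n = \min_{0\leq j\leq n-2}\La_{n,j}$. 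So no work is needed there.

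For the first inequality, $\tau^+(N) \geq \min\{\tau^+(M_1 \amalg M_2),\La_{n,k}\}$, the idea is to feed near-optimal metrics into Theorem \ref{main} and pass to the limit. If $\tau^+(M_1 \amalg M_2)=0$ the right-hand side is zero (recall $\La_{n,k}>0$ by Theorem \ref{lamin_hyperb}, though one only needs $\La_{n,k}\geq 0$ here) and there is nothing to prove, so I would assume $\tau^+(M_i)>0$ for $i=1,2$. Fix $\ep>0$ with $\tau^+(M_i)-\ep>0$, and use the definition of $\tau^+$ to pick metrics $g_i$ on $M_i$ with $\lamin(M_i,g_i) > \tau^+(M_i)-\ep$. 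The key observation — and the one place where a little care is needed — is that $\lamin(M_i,g_i)>0$ automatically forces $D^{g_i}$ to be invertible, by the fact from \cite{ammann:03} recalled above (a non-trivial kernel would give $\lamin(M_i,g_i)=0$). Hence $(M_1,g_1)$ and $(M_2,g_2)$ meet the hypotheses of Theorem \ref{main}.

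Theorem \ref{main} then produces metrics $g_\th$ on $N$, $\th\in(0,\th_0)$, with
$$
\liminf_{\th\to 0}\lamin(N,g_\th) \;\geq\; \min\{\lamin(M_1\amalg M_2, g_1\amalg g_2),\La_{n,k}\},
$$
and using $\lamin(M_1\amalg M_2, g_1\amalg g_2) = \min\{\lamin(M_1,g_1),\lamin(M_2,g_2)\}$ together with the choice of the $g_i$, the right side exceeds $\min\{\tau^+(M_1),\tau^+(M_2),\La_{n,k}\}-\ep$. Since $\tau^+(N) = \sup_g\lamin(N,g) \geq \lamin(N,g_\th)$ for every $\th$, one gets $\tau^+(N)\geq\liminf_{\th\to 0}\lamin(N,g_\th) > \min\{\tau^+(M_1),\tau^+(M_2),\La_{n,k}\}-\ep$; letting $\ep\to 0$ and applying the disjoint union identity once more yields exactly $\tau^+(N)\geq\min\{\tau^+(M_1\amalg M_2),\La_{n,k}\}$. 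The only genuine obstacle is the one already flagged: arranging that the approximately optimal conformal geometry can be chosen so as to satisfy the invertibility hypothesis of Theorem \ref{main}, which is precisely what the characterisation of positivity of $\lamin$ supplies; everything else is bookkeeping with minima and suprema.
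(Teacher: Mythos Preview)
Your argument is correct and follows exactly the approach the paper indicates: take the first inequality of Theorem~\ref{main} and pass to the supremum over metrics on $M_1\amalg M_2$. Your version is more careful than the paper's one-line justification in that you make explicit why the near-optimal metrics $g_i$ can be taken to satisfy the invertibility hypothesis of Theorem~\ref{main} (namely, $\lamin(M_i,g_i)>0$ forces $\ker D^{g_i}=0$), a point the paper leaves implicit.
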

Note that these estimates on $\tau^+$ would be trivial without 
Theorem \ref{lamin_hyperb}.

\subsection{Surgery and bordism}

Performing surgery on a spin manifold is a special case of joining
manifolds, this is discussed in more detail in 
Section~\ref{joining}. From Corollary~\ref{firstcortomain} we
get an inequality relating the $\tau$-invariant before and after
surgery. For a compact spin manifold $M$ of dimension $n$ we define 
$$
\taubar^+(M) \definedas \min \{ \tau^+(M), \La_{n} \}.
$$
We also define
$$
\taubar(M) \definedas \min \{ \tau^+(M),\tau^-(M), \La_{n} \}.
$$
If $n\not\equiv 3 \mod 4$ then $\taubar(M)=\taubar^+(M)$. As noted
before, all results for $\taubar^+(M)$ also hold for 
$\taubar^-(M) \definedas \min\{\tau^-(M), \La_{n} \}$. 

\begin{corollary} \label{cortausurgery}
Assume that $M$ is a spin manifold of dimension $n$
and that $N$ is obtained from $M$ by a surgery of codimension 
$n-k \geq 2$. Then 
$$
\tau^+(N)
\geq 
\min \{ \tau^+(M),\Lambda_{n,k} \}
\geq 
\min \{ \tau^+(M),\Lambda_{n} \}.
$$
\end{corollary}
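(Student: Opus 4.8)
The plan is to derive this directly from Corollary~\ref{firstcortomain} by exhibiting the surgered manifold $N$ as the result of joining $M$ with a round sphere. First I would dispose of the trivial case: if $\tau^+(M)=0$ there is nothing to prove, since $\tau^+(N)\geq 0$ always (the first non-negative Dirac eigenvalue and the volume are non-negative) and $\La_{n,k}\geq 0$ — indeed $\La_{n,k}>0$ by Theorem~\ref{lamin_hyperb} — so that $\min\{\tau^+(M),\La_{n,k}\}=0$. Hence we may assume $\tau^+(M)>0$. By the characterization recalled in the introduction this means that $M$ carries metrics $g$ with $D^g$ invertible, and moreover $\tau^+(M)=\sup\lamin(M,g)$ with the supremum taken only over such metrics (the others contribute $\lamin=0$).

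Next I would set up the joining. Let $S^k\subset M$ be the embedded sphere along which the surgery of codimension $n-k\geq 2$ is performed, equipped with the given trivialization of its normal bundle, and consider the standardly embedded $S^k\subset S^n$ with its standard trivialized normal bundle. Removing an open tubular neighborhood of this $S^k$ from $S^n$ leaves a copy of $D^{k+1}\times S^{n-k-1}$, so that gluing $M$ minus a tube around its $S^k$ to $S^n$ minus a tube around its $S^k$ along the common boundary $S^k\times S^{n-k-1}$ reproduces exactly the surgery: deleting $S^k\times D^{n-k}$ from $M$ and replacing it by $D^{k+1}\times S^{n-k-1}$. That this joining, carried out as in Section~\ref{joining}, produces $N$ together with the spin structure determined by that of $M$ and the chosen trivialization, is precisely the content of the statement that surgery is a special case of joining. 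Since the round sphere satisfies $\lamin(S^n,\sigma^n)=\tfrac n2\omega_n^{1/n}>0$, the Dirac operator $D^{\sigma^n}$ has trivial kernel, and as $S^k$ is spin with $0\leq k\leq n-2$, the data $(M,g)$, $(S^n,\sigma^n)$, $W=S^k$ satisfy all the hypotheses of Theorem~\ref{main}.

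Finally I would apply Corollary~\ref{firstcortomain} with $M_1=M$, $M_2=S^n$ and $W=S^k$. Because $\tau^+(S^n)=\tfrac n2\omega_n^{1/n}$ is the maximal value of $\tau^+$ among compact spin $n$-manifolds, we get $\tau^+(M_1\amalg M_2)=\min\{\tau^+(M),\tau^+(S^n)\}=\tau^+(M)$, so Corollary~\ref{firstcortomain} yields $\tau^+(N)\geq\min\{\tau^+(M),\La_{n,k}\}$; and $\La_{n,k}\geq\La_n=\min_{0\leq j\leq n-2}\La_{n,j}$ immediately gives the second inequality. The only step that is not pure formality is the topological and spinorial identification of surgery with joining along $S^k$ — in particular matching up the normal-bundle trivializations and checking that the spin structures agree — and this is exactly what Section~\ref{joining} is set up to provide; granting it, the corollary follows at once.
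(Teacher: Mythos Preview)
Your proof is correct and matches the paper's approach: the paper does not write out a separate proof of Corollary~\ref{cortausurgery} but simply observes (just before stating it) that surgery is a special case of joining manifolds as described in Section~\ref{joining}, with $M_1=M$, $M_2=S^n$, $W=S^k$, and then invokes Corollary~\ref{firstcortomain}. Your treatment is in fact more explicit than the paper's, in particular your separate handling of the case $\tau^+(M)=0$ and your observation that $\tau^+(S^n)$ is maximal so that $\min\{\tau^+(M),\tau^+(S^n)\}=\tau^+(M)$.
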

Corollary~\ref{cortausurgery} tells us that
$$
\taubar^+(N) \geq \taubar^+(M), \quad \taubar(N)\geq \taubar(M).
$$
Two compact spin manifolds $M$ and $N$ are spin bordant if there is a 
spin diffeomorphism from their disjoint union to the boundary of a
spin manifold of one dimension higher, and this diffeomorphism
respects the orientation of $N$ and reverses that of $M$. This happens
if and only if $N$ can be obtained from $M$ by a sequence of
surgeries. To apply Corollary~\ref{cortausurgery} we need to know when
this sequence of surgeries can be chosen to include only surgeries of
codimension at least two. The theory of handle decompositions of
bordisms tells us that this can be done when $N$ is connected, see 
\cite[VII Theorem 3]{kirby:89} for dimension $3$, and 
\cite[VIII Proposition 3.1]{kosinski:93} for higher dimensions.

\begin{corollary} \label{lem.bord.inv}
Let $M$ and $N$ be spin bordant manifolds of dimension at least $3$
and assume that $N$ is connected. Then $\taubar(N)\geq\taubar(M)$.
In particular, if $M$ is also connected we have $\taubar(N)=\taubar(M)$.
\end{corollary}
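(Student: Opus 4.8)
The plan is to deduce Corollary~\ref{lem.bord.inv} directly from Corollary~\ref{cortausurgery} together with the handle-decomposition facts already cited in the text. First I would recall that two spin bordant manifolds $M$ and $N$ are related by a finite sequence of surgeries; the content we need is that when $N$ is connected and $\dim N \ge 3$, this sequence can be arranged so that \emph{all} surgeries have codimension at least $2$ (equivalently, the attaching of handles of index $0$ and $1$ can be traded away). This is exactly what \cite[VII Theorem 3]{kirby:89} gives in dimension $3$ and \cite[VIII Proposition 3.1]{kosinski:93} gives in dimensions $\ge 4$. Granting this, we obtain a chain $M = N_0, N_1, \ldots, N_m = N$ in which each $N_{i+1}$ is obtained from $N_i$ by a surgery of codimension $n-k_i \ge 2$.

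Next I would apply Corollary~\ref{cortausurgery} to each step. That corollary yields $\tau^+(N_{i+1}) \ge \min\{\tau^+(N_i), \La_{n,k_i}\} \ge \min\{\tau^+(N_i), \La_n\}$, and taking the minimum with $\La_n$ on both sides gives $\taubar^+(N_{i+1}) \ge \taubar^+(N_i)$, as already observed in the text right after Corollary~\ref{cortausurgery}. Chaining these inequalities along $i = 0, \ldots, m-1$ produces $\taubar^+(N) \ge \taubar^+(M)$. The same argument applied to the reversed spectrum (using that, as noted, all statements for $\taubar^+$ hold verbatim for $\taubar^-$, and that a surgery on $M$ is simultaneously a surgery on $-M$) gives $\taubar^-(N) \ge \taubar^-(M)$. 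Since $\taubar = \min\{\tau^+, \tau^-, \La_n\} = \min\{\taubar^+, \taubar^-\}$ (interpreting $\tau^-$ as $\tau^+$ when $n \not\equiv 3 \bmod 4$), we conclude $\taubar(N) \ge \taubar(M)$.

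For the final sentence: if $M$ is also connected, then since spin bordism is symmetric, $M$ is obtained from $N$ by a sequence of surgeries as well, and the same handle-trading argument (now using connectedness of $M$) lets us assume all of these have codimension $\ge 2$. The inequality above, applied with the roles of $M$ and $N$ exchanged, gives $\taubar(M) \ge \taubar(N)$, and hence $\taubar(M) = \taubar(N)$.

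The one genuinely non-trivial point is the handle-decomposition input: one must make sure that the surgeries realizing the bordism can be chosen of codimension $\ge 2$, i.e.\ that index-$0$ and index-$1$ handles can be eliminated, and that this can be done \emph{compatibly with the spin structures}. The dimension hypothesis $\dim \ge 3$ and the connectedness of $N$ are precisely what make this possible, and it is the reason those two references are invoked; everything else is a routine induction on the length of the surgery sequence using Corollary~\ref{cortausurgery}.
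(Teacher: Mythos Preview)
Your proposal is correct and follows essentially the same approach as the paper: the paper's argument is contained in the paragraph immediately preceding the corollary, which explains that spin-bordant manifolds are related by surgeries and that, when $N$ is connected and $\dim\ge 3$, the cited handle-decomposition results allow these surgeries to be taken of codimension $\ge 2$, so Corollary~\ref{cortausurgery} applies inductively. Your treatment is slightly more explicit than the paper's (in particular, you spell out why the inequality passes from $\taubar^+$ to $\taubar$ via $\taubar^-$ and orientation reversal), but the strategy is identical.
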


Corollary~\ref{lem.bord.inv} can also be shown in dimension $2$ with
similar arguments \cite[Theorem 1.3]{ammann.humbert:p06a}. 
 
The spin bordism group $\OmSpin$ is the set of equivalence classes of
spin bordant manifolds of dimension $n$ with disjoint union as
addition. Since every element in $\OmSpin$ can be represented by a
connected manifold we obtain a well-defined map 
$\taubar: \OmSpin \to [0,\La_{n}]$ which sends the equivalence class
$[M]$ of a connected spin manifold $M$ to $\taubar(M)$.

\begin{corollary}
There is a positive constant $\ep_n$ such that
$$
\tau^+(M) \in \{0\} \cup [\ep_n,\lamin(S^n,\sigma^n)].
$$
for all spin manifolds $M$ of dimension $n$.
\end{corollary}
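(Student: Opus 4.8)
The plan is to deduce the statement from the surgery inequality of Corollary~\ref{cortausurgery} together with the finite generation of the spin bordism group $\OmSpin$. Recall from the introduction that $\tau^+(M)=0$ precisely when $\al(M)\neq 0$, that $\tau^+(M)\leq\lamin(S^n,\si^n)$ for every compact spin manifold, and that $\tau^+(M_1\amalg M_2)=\min\{\tau^+(M_1),\tau^+(M_2)\}$. It therefore suffices to produce a constant $\ep_n>0$ with $\tau^+(M)\geq\ep_n$ for every \emph{connected} spin manifold $M$ of dimension $n$ with $\al(M)=0$, the general (possibly disconnected) case following by taking the minimum over the components. We may also assume $n\geq 3$; the cases $n\leq 2$ are either trivial ($n=0$) or covered by the low-dimensional versions of our results, see \cite{ammann.humbert:p06a} for $n=2$.

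First I would convert the surgery inequality into a connected sum inequality. For compact spin $n$-manifolds $A$ and $B$ the connected sum $A\# B$ is obtained from $A\amalg B$ by a surgery on an embedded $S^0$, which has codimension $n\geq 2$; Corollary~\ref{cortausurgery} therefore gives $\tau^+(A\# B)\geq\min\{\tau^+(A\amalg B),\La_n\}=\min\{\tau^+(A),\tau^+(B),\La_n\}$, and taking the minimum with $\La_n$ yields
\begin{equation*}
\taubar^+(A\# B)\ \geq\ \min\{\taubar^+(A),\taubar^+(B)\}.
\end{equation*}
Iterating this, $\taubar^+(D_1\#\cdots\# D_N)\geq\min_{1\leq j\leq N}\taubar^+(D_j)$ for any compact spin $n$-manifolds $D_1,\dots,D_N$. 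I would also observe that the proof of Corollary~\ref{lem.bord.inv} goes through verbatim with $\taubar^+$ in place of $\taubar$, since it uses only the $\tau^+$-part of Corollary~\ref{cortausurgery}: two connected spin manifolds of dimension $n\geq 3$ that are spin bordant have the same value of $\taubar^+$.

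Next I would fix connected spin $n$-manifolds $B_1,\dots,B_r$ and $B_1',\dots,B_r'$ so that $[B_1],\dots,[B_r]$ generate the subgroup $\ker\bigl(\al\colon\OmSpin\to\mZ\bigr)$ (or $\ker\bigl(\al\colon\OmSpin\to\mZ/2\bigr)$, according to $n\bmod 8$) and $[B_i']=-[B_i]$; this is possible because $\OmSpin$ is finitely generated, hence so is every one of its subgroups, and because every spin bordism class of positive dimension has a connected representative. Since each $B_i$ and $B_i'$ lies in $\ker\al$ we have $\al(B_i)=\al(B_i')=0$, hence $\tau^+(B_i),\tau^+(B_i')>0$, so
\begin{equation*}
\ep_n\ \definedas\ \min\bigl\{\La_n,\ \taubar^+(B_1),\dots,\taubar^+(B_r),\ \taubar^+(B_1'),\dots,\taubar^+(B_r')\bigr\}
\end{equation*}
is strictly positive, the positivity of $\La_n$ being precisely Theorem~\ref{lamin_hyperb}. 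Now let $M$ be a connected spin manifold of dimension $n\geq 3$ with $\al(M)=0$ and write $[M]=\sum_i a_i[B_i]$ in $\OmSpin$. Let $C$ be the connected sum of $S^n$ together with, for each $i$, $|a_i|$ copies of $B_i$ when $a_i\geq 0$ and of $B_i'$ when $a_i<0$; the extra $S^n$-summand keeps $C$ nonempty and is harmless because $\taubar^+(S^n)=\La_n\geq\ep_n$. Since connected sum realizes addition in $\OmSpin$ we have $[C]=[M]$, so $C$ and $M$ are spin bordant connected manifolds of dimension $\geq 3$, whence $\taubar^+(M)=\taubar^+(C)$. By the iterated connected sum inequality $\taubar^+(C)\geq\ep_n$, because every connected summand of $C$ is one of $S^n,B_1,\dots,B_r,B_1',\dots,B_r'$ and each of these has $\taubar^+\geq\ep_n$. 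Therefore $\tau^+(M)\geq\taubar^+(M)=\taubar^+(C)\geq\ep_n$, which is the desired bound.

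The analytic heart of the matter --- positivity of $\La_n$ and the surgery estimate --- is already contained in Theorem~\ref{lamin_hyperb} and Corollary~\ref{cortausurgery}, so granting these the present statement is essentially formal. Accordingly I expect the only delicate points to be bookkeeping: verifying that the connected-sum construction yields a connected manifold representing the prescribed spin bordism class (accounting for the torsion summands of $\OmSpin$ and the degenerate case $[M]=0$), that connected sums carry compatible spin structures in the sense of Section~\ref{joining}, and that Corollary~\ref{lem.bord.inv} may legitimately be applied to $\taubar^+$ and not just to $\taubar$.
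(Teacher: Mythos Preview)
Your proof is correct and takes essentially the same approach as the paper: define $\ep_n$ as the minimum of $\La_n$ and the $\taubar$-values of finitely many connected generators of $\ker(\al:\OmSpin\to KO_n)$, then invoke the bordism invariance of Corollary~\ref{lem.bord.inv}. The paper works with $\taubar$, which is orientation-independent (so $\taubar(-N_i)=\taubar(N_i)$ and inverse classes need no separate treatment), whereas you work with $\taubar^+$ and therefore include explicit representatives $B_i'$ of $-[B_i]$; this is a cosmetic difference and both choices produce a valid positive $\ep_n$.
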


\begin{proof}
The spin bordism group $\OmSpin$ is finitely generated 
\cite[page 336]{stong:68}. This implies that the kernel of the map 
$\al:\OmSpin\to KO_n$ is also finitely generated. Let 
$[N_1], \ldots, [N_r]$ be generators of this kernel, we assume that
the manifolds $N_i$ are all connected. Since $\tau(M) = 0$ if and only
if $\al(M) \neq 0$ we obtain the corollary for 
$$
\ep_n \definedas \min \{ \La_{n}, \taubar(N_1), \dots, \taubar(N_r) \}.
$$ 
\end{proof}

The $\al$-map is injective when $n<8$, and then $\ep_n=\La_n$. 
We do not know whether there are $n \in \mN$ with 
$\ep_n < \La_n$. In other words, we do not know if there are 
$n$-dimensional manifolds $M$ with $0<\tau^+(M)<\La_n$. If such 
manifolds exist, the following observations might be interesting.

First, if $M$ is a spin manifold with $\tau^+(M)<\La_n$, then it 
follows from Corollary~\ref{lem.bord.inv} that the $\si$-constant 
of any manifold $N$ spin bordant to $M$ satisfies
$$
\si(N)\leq \frac{4(n-1)}{n}\tau^+(M)^2.
$$

For the next observation we define
$$
S(t) \definedas \{[M]\in \OmSpin \,|\, \taubar(M)\geq t\}, 
\quad 
S^+(t) \definedas \{[M]\in \OmSpin \,|\, \taubar^+(M)\geq t\},
$$
and 
$$
T(t) \definedas \{[M]\in \OmSpin \,|\, \taubar(M)> t\},
\quad 
T^+(t) \definedas \{[M]\in \OmSpin \,|\, \taubar^+(M)> t\}.
$$
Obviously $S(t)=S^+(t)$ and $T(t)=T^+(t)$ in dimensions 
$n\not\equiv 3 \mod 4$.

\begin{corollary}
$S(t)$ is a subgroup of $\OmSpin$ for $t\in[0,\La_{n}]$ and 
$T(t)$ is a subgroup of $\OmSpin$ for $t\in[0,\La_{n})$. 
If $n\equiv 3 \mod 4$, then $S^+(t)$ and $T^+(t)$ are submonoids.
\end{corollary}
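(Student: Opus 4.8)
The plan is to show that $S(t)$, $T(t)$ (and their $+$-variants) are closed under the disjoint-union addition of $\OmSpin$, which makes them submonoids; and, because $\OmSpin$ is a group in which every element has an inverse represented by the orientation-reversal $-M$, we then need only check closure under $[M] \mapsto -[M]$ to upgrade submonoids to subgroups. The key inputs are the additivity formula $\tau^+(M_1 \amalg M_2) = \min\{\tau^+(M_1), \tau^+(M_2)\}$, which forces $\taubar^+(M_1 \amalg M_2) = \min\{\taubar^+(M_1), \taubar^+(M_2)\}$ and likewise with $\taubar^-$, hence $\taubar(M_1 \amalg M_2) = \min\{\taubar(M_1), \taubar(M_2)\}$; the bordism invariance $\taubar(N) = \taubar(M)$ for connected spin bordant $M, N$ of Corollary~\ref{lem.bord.inv}, which guarantees these values only depend on the class in $\OmSpin$; and the identity $\tau^+(-M) = \tau^-(M)$, which gives $\taubar^-(-M) = \taubar^+(M)$ and $\taubar^+(-M) = \taubar^-(M)$, so that $\taubar(-M) = \min\{\taubar^+(-M), \taubar^-(-M), \La_n\} = \min\{\taubar^-(M), \taubar^+(M), \La_n\} = \taubar(M)$.

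First I would record that $\taubar$ descends to a well-defined function $\OmSpin \to [0, \La_n]$ (already noted in the text via connected representatives and Corollary~\ref{lem.bord.inv}), and similarly $\taubar^+$ in dimensions $n \equiv 3 \bmod 4$ — here one must be slightly careful, because the $\al$-map and bordism invariance were stated for $\taubar$, not $\taubar^+$; however Corollary~\ref{cortausurgery} and the surgery description of spin bordism give $\taubar^+(N) \geq \taubar^+(M)$ whenever $N$ is obtained from $M$ by codimension-$\geq 2$ surgery, and when both $M$ and $N$ are connected one can run surgeries in both directions, yielding $\taubar^+(N) = \taubar^+(M)$; so $\taubar^+$ is also well-defined on $\OmSpin$. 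Next, from $\taubar(M_1 \amalg M_2) = \min\{\taubar(M_1), \taubar(M_2)\}$ it is immediate that if $\taubar(M_1) \geq t$ and $\taubar(M_2) \geq t$ then $\taubar(M_1 \amalg M_2) \geq t$, so $S(t)$ is closed under addition and contains $0 = [\emptyset]$ (with $\taubar(\emptyset) = \La_n \geq t$ for $t \in [0, \La_n]$); the same argument with strict inequalities shows $T(t)$ is an additive submonoid for $t \in [0, \La_n)$, the half-open interval being needed so that $\taubar(\emptyset) = \La_n > t$ still holds. Then $\taubar(-M) = \taubar(M)$ shows $[M] \in S(t) \iff -[M] \in S(t)$ and likewise for $T(t)$, so these submonoids are in fact subgroups. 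Finally, for $n \equiv 3 \bmod 4$ one has $\tau^-(-M) = \tau^+(M)$ but in general no relation between $\tau^+(M)$ and $\tau^-(M)$, so $\taubar^+$ is not invariant under orientation reversal; nonetheless $\taubar^+(M_1 \amalg M_2) = \min\{\taubar^+(M_1), \taubar^+(M_2)\}$ still gives closure under addition, so $S^+(t)$ and $T^+(t)$ are submonoids (for $t$ in the closed resp.\ half-open interval), completing the proof.

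The main obstacle I anticipate is the well-definedness of $\taubar^+$ on $\OmSpin$ in the case $n \equiv 3 \bmod 4$: the clean bordism-invariance statement in Corollary~\ref{lem.bord.inv} is phrased for $\taubar$, and one needs to be sure the same machinery (the handle-decomposition result of Kosinski/Kirby reducing a bordism between connected manifolds to codimension-$\geq 2$ surgeries, applied in both directions) legitimately transfers to $\taubar^+$ via Corollary~\ref{cortausurgery}. Everything else is a formal consequence of the min-additivity of $\taubar$ and $\taubar^\pm$ under disjoint union together with the behaviour under orientation reversal; the only other point requiring a word is the role of the empty manifold as the identity and the precise endpoint conventions ($[0, \La_n]$ for the $S$'s, $[0, \La_n)$ for the $T$'s), which the statement already encodes correctly.
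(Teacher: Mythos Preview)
Your proof is correct and supplies the details that the paper omits (the corollary is stated without proof there). The argument via min-additivity of $\taubar$ and $\taubar^+$ under disjoint union, bordism invariance from Corollary~\ref{lem.bord.inv}, and the orientation-reversal identity $\taubar(-M)=\taubar(M)$ is exactly the intended one.

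Two small remarks. First, your concern about the well-definedness of $\taubar^+$ on $\OmSpin$ is already handled in the paper: immediately before Corollary~\ref{lem.bord.inv} it is recorded that Corollary~\ref{cortausurgery} yields $\taubar^+(N)\geq\taubar^+(M)$ under codimension-$\geq 2$ surgery, so the same handle-decomposition argument that proves Corollary~\ref{lem.bord.inv} for $\taubar$ works verbatim for $\taubar^+$. Second, rather than invoking $\taubar(\emptyset)$, which is not covered by the paper's definitions, it is cleaner to verify that the zero class lies in $S(t)$ and $T(t)$ by taking the connected representative $S^n$: since $\tau^\pm(S^n)=\lamin(S^n,\sigma^n)=\La_{n,0}\geq\La_n$, one has $\taubar(S^n)=\taubar^+(S^n)=\La_n$, which gives $0\in S(t)$ for $t\in[0,\La_n]$ and $0\in T(t)$ for $t\in[0,\La_n)$ as required.
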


\begin{corollary}
The values of $\taubar$ cannot accumulate from above. 
\end{corollary}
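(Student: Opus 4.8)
The plan is to argue by contradiction, turning an accumulation of values of $\taubar$ into a strictly increasing infinite chain of subgroups of $\OmSpin$, which is impossible because $\OmSpin$ is finitely generated.

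First I would reformulate the statement. Every value of $\taubar$ lies in $[0,\La_n]$, so to say that the values accumulate from above is the same as saying that there is a strictly decreasing sequence $t_1 > t_2 > t_3 > \cdots$ of real numbers, each of which is attained, i.e.\ $t_i = \taubar(M_i)$ for some connected spin manifold $M_i$ of dimension $n$: any such sequence automatically converges to some $t_\infty \ge 0$ with $t_i > t_\infty$ for all $i$, and conversely, from an accumulation point from above one extracts such a sequence. By Corollary~\ref{lem.bord.inv} the quantity $\taubar$ is a spin bordism invariant, hence descends to the well-defined map $\taubar \colon \OmSpin \to [0,\La_n]$, and $\taubar([M_i]) = t_i$.

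Next I would look at the subsets $S(t_i) = \{[M] \in \OmSpin \mid \taubar(M) \ge t_i\}$. Since each $t_i \in [0,\La_n]$, each $S(t_i)$ is a subgroup of $\OmSpin$, and since a smaller threshold imposes a weaker condition, the chain $t_1 > t_2 > \cdots$ gives $S(t_1) \subseteq S(t_2) \subseteq S(t_3) \subseteq \cdots$. These inclusions are strict: $[M_{i+1}] \in S(t_{i+1})$ because $\taubar([M_{i+1}]) = t_{i+1} \ge t_{i+1}$, while $[M_{i+1}] \notin S(t_i)$ because $t_{i+1} < t_i$. Hence $S(t_i) \subsetneq S(t_{i+1})$ for every $i$.

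Finally I would invoke that $\OmSpin$ is finitely generated \cite[page~336]{stong:68}, hence a Noetherian $\mZ$-module, so it admits no infinite strictly increasing chain of subgroups; this contradicts the chain just constructed, and the proof is complete. The argument is entirely formal once Corollary~\ref{lem.bord.inv} and the subgroup property of the $S(t)$ are in hand, so I do not expect any genuine obstacle here; the only point that needs a little care is the elementary reduction of ``accumulation from above'' to the existence of a strictly decreasing sequence of attained values, which uses only that the set of values is bounded.
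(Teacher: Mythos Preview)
Your proof is correct and follows essentially the same route as the paper's: both arguments build the ascending chain $S(t_1)\subset S(t_2)\subset\cdots$ of subgroups of the finitely generated group $\OmSpin$ and use the ascending chain condition to reach a contradiction. The only cosmetic difference is that the paper phrases the ACC by saying that the union $\bigcup_i S(t_i)=T(t_\infty)$ is finitely generated and hence equal to some $S(t_I)$, while you invoke the Noetherian property directly; these are equivalent.
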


\begin{proof}
Assume that $t_i \definedas \taubar(M_i)$, $i\in \mN$, is a decreasing
sequence of values of $\taubar$ which converges to a limit $t_\infty$.
We want to show that $t_i = t_\infty$ for all but finitely many $i$.
 
We have $S(t_i) \subset S(t_{i+1})$, and hence $\bigcup_i S(t_i) =
T(t_\infty)$ is a subgroup of the finitely generated group
$\OmSpin$. It is thus finitely generated itself and we choose a finite
set of generators. There must then be an $I \in \mN$ such that
$S(t_I)$ contains this finite set, and thus $S(t_I) = T(t_\infty)$. 
Hence $[M_i] \in S(t_I)$ for all $i$, which implies $t_i \geq t_I$.
We conclude that $t_i = t_I = t_\infty$ for $i \geq I$. 
%
\end{proof}

We do not know whether $\taubar^+$ can accumulate from above in
dimensions $n\equiv 3\mod 4$.

\subsection{Variants of the results}

We already remarked earlier that if the alpha-genus $\al(M)$ of a spin
manifold $M$ does not vanish, then the index theorem tells us that the
kernel of $D^g$ is non-trivial for any metric $g$ on $M$, and hence
$\tau^+(M)=0$. For a connected spin manifold $M$ the index theorem
implies that the kernel of the Dirac operator has at least dimension 
$$
a(M)
\definedas
\begin{cases}
|\widehat{A}(M)|, &\text{if $n \equiv 0 \mod 4$;} \\
1, &\text{if $n \equiv 1 \mod 8$ and $\alpha(M)\neq 0$;} \\
2, &\text{if $n \equiv 2 \mod 8$ and $\alpha(M)\neq 0$;} \\
0, &\text{otherwise.}\\  
\end{cases}
$$

Let us modify the definition of $\tau^+$ and use the $k$-th 
non-negative eigenvalue of the Dirac operator instead of the first
one. The quantity thus obtained, denoted by $\tau^+_k(M)$, is zero if
$k\leq a(M)$. It follows from \cite{ammann:03} and
\cite{ammann.dahl.humbert:p06} that $\tau^+_{a(M)+1}(M)>0$. We expect
that our methods generalize to this situation and yield similar
surgery formulas for $\tau^+_k$.

\section{Preliminaries}
\label{sec.prelim} 

\subsection{Notation for balls and neighbourhoods} 
\label{notation}

We write $B^n(r)$ for the open ball of radius $r$ around $0$
in $\mR^n$, and set $B^n \definedas B^n(1)$. For a Riemannian manifold 
$(M,g)$ we let $B^g(p,r)$ denote the open ball of radius $r$ around 
$p \in M$. If the Riemannian metric is clear from the context we will
write $B(p,r)$. For a Riemannian manifold $(M,g)$ and a subset 
$S \subset M$ we let $U^g(S,r) \definedas \bigcup_{x \in S} B^g(x,r)$
denote the $r$-neighbourhood of $S$. Again, if the Riemannian metric
is clear from the context we abbreviate to $U(S,r)$.

\subsection{Joining manifolds along submanifolds} 
\label{joining}

We are now going to describe how two manifolds are joined
along a common submanifold with trivialized normal bundle. Strictly
speaking this is a differential topological construction, but since we
work with Riemannian manifolds we will make the construction adapted
to the Riemannian metrics and use distance neighbourhoods defined by
the metrics etc. 

Let $(M_1,g_1)$ and $(M_2,g_2)$ be complete Riemannian manifolds of
dimension $n$. Let $W$ be a compact manifold of dimension $k$, where 
$0 \leq k \leq n$. We assume that $W$ is embedded in both $M_1$ and
$M_2$ with trivializations of the normal bundle, we desribe these
embeddings as follows.

Let $\bar{w}_i: W \times \mR^{n-k} \to TM_i$,
$i=1,2$, be smooth embeddings. We assume that $\bar{w}_i$ restricted
to $W \times \{ 0 \}$ maps to the zero section of $TM_i$ (which we
identify with $M_i$) and thus gives an embedding $W \to M_i$. The
image of this embedding is denoted by $W_i'$. Further we assume that
$\bar{w}_i$ restrict to linear isomorphisms $\{ p \} \times \mR^{n-k}
\to N_{\bar{w}_i(p,0)} W_i'$ for all $p \in W_i$, where $N W_i'$
denotes the normal bundle of $W_i'$ defined using $g_i$.

We set $w_i \definedas \exp^{g_i} \circ \bar{w}_i$. For $i=1,2$ this
gives embeddings $w_i: W \times B^{n-k}(\Rmax) \to M_i$ for some 
$\Rmax > 0$. We have $W_i' = w_i(W \times \{ 0 \})$ and we define
the disjoint union
$$ 
(M,g) \definedas (M_1 \amalg M_2, g_1 \amalg g_2),
$$
and 
$$
W' \definedas W_1' \amalg W_2'.
$$
Let $r_i$ be the function on $M_i$ giving the distance to $W_i'$. 
Then $r_1 \circ w_1 (w,x)= r_2 \circ w_2(w,x)=|x|$ for $w\in W$, 
$x\in B^{n-k}(\Rmax)$. Let
$r$ be the function on $M$ defined by $r(x) \definedas r_i(x)$ for 
$x \in M_i$, $i=1,2$. For $0 < \ep$ we set 
$U_i(\ep) \definedas \{ x \in M_i \, : \, r_i(x) < \ep \}$ and
$U(\ep) \definedas U_1(\ep) \cup U_2(\ep)$. For $0 < \ep < \th$ we
define
$$
N_{\ep} 
\definedas
( M_1 \setminus U_1(\ep) ) \cup ( M_2 \setminus U_2(\ep) )/ {\sim},
$$
and 
$$
U^N_\ep (\th)
\definedas
(U(\th) \setminus U(\ep)) / {\sim}
$$
where ${\sim}$ indicates that we identify $x \in \partial U_1(\ep)$
with $w_2 \circ w_1^{-1} (x) \in \partial U_2(\ep)$. Hence
$$
N_{\ep} 
=
(M \setminus U(\th) ) \cup U^N_\ep (\th).
$$
We say that $N_\ep$ is obtained from $M_1$, $M_2$ (and $\bar{w}_1$, 
$\bar{w}_2$) by a connected sum along $W$ with parameter $\ep$. 

The diffeomorphism type of $N_\ep$ is independent of $\ep$, hence we
will usually write $N = N_\ep$. However, in some situations where
dropping the index $\ep$ might cause ambiguites we will write
$N_\ep$. For example the function $r:M_1 \amalg M_2\to [0,\infty)$ 
also defines a continuous function $r:N_\ep\to [\ep,\infty)$ whose 
definition depends on $\ep$. We will also keep the $\ep$-subscript for  
$U_\ep^N(\th)$ as important estimates for spinors will be carried 
out on  $U_\ep^N(\th)$.
As the embeddings $w_1$ and $w_2$ preserve the spin
structure, the manifold $N$ carries a spin structure such that its
restriction to $(M_1\setminus w_1(W\times B^{n-k})) \amalg  
(M_2\setminus w_2(W\times B^{n-k}) )$ coincides with the restriction
of the given spin structure on $M_1 \amalg M_2$. If $W$ is not
connected, then this choice is not unique. The statements of our 
theorem hold for any such spin structure on $N$. 

The surgery operation on a manifold is a special case of taking
connected sum along a submanifold. Indeed, let $M$ be a compact
manifold of dimension $n$ and let $M_1 = M$, $M_2 = S^n$, 
$W = S^k$. Let $w_1 : S^k \times B^{n-k} \to M$ be an embedding
defining a surgery and let $w_2 :  S^k \times B^{n-k} \to S^n$ be the
standard embedding. Since $S^n \setminus w_2 (S^k \times B^{n-k})$ is
diffeomorphic to $B^{k+1} \times S^{n-k-1}$ we have in this situation
that $N$ is obtained from $M$ using surgery on $w_1$, see 
\cite[Section VI.9]{kosinski:93}.

\subsection{Comparing spinors for different metrics}  
\label{bgt}

Let $M$ be a spin manifold of dimension~$n$ and let $g$, $g'$ be
Riemannian metrics on $M$. The goal of this paragraph is to identify
the spinor bundles of $(M,g)$ and $(M,g')$ following Bourguignon and
Gauduchon \cite{bourguignon.gauduchon:92}.   

There exists a unique endomorphism $b^{g}_{g'}$ of $TM$ which is
positive, symmetric with respect to $g$, and satisfies 
$g(X,Y) = g'(b^{g}_{g'} X, b^{g}_{g'}Y)$ for all $X,Y \in TM$.
This endomorphism maps $g$-orthonormal frames at a point to
$g'$-orthonormal frames at the same point and we get a map
$b^{g}_{g'}: \mathrm{SO}(M,g) \to \mathrm{SO}(M,g')$ of
$\mathrm{SO}(n)$-principal bundles. If we assume that
$\mathrm{Spin}(M,g)$ and $\mathrm{Spin}(M,g')$ are equivalent spin
structures on $M$ then the map $b^{g}_{g'}$ lifts to a map
$\beta^{g}_{g'}$ of $\mathrm{Spin}(n)$-principal bundles,
\dgARROWLENGTH=2em
$$
\begin{diagram}   
\node{\mathrm{Spin}(M,g)}\arrow[2]{e,t}
{\beta^{g}_{g'}}\arrow{s}\node[2]
{\mathrm{Spin}(M,g')}\arrow{s}\\   
\node{\mathrm{SO}(M,g)}\arrow[2]{e,t}
{b^{g}_{g'}}\node[2]{\mathrm{SO}(M,g')}   
\end{diagram}.
$$
From this we get a map between the spinor bundles $\Sigma^{g} M$ and
$\Sigma^{g'} M$ denoted by the same symbol and defined by    
\begin{equation*} 
\begin{aligned}
\beta^{g}_{g'}:
\Sigma^{g} M = 
\mathrm{Spin}(M,g) \times_\rho \Sigma_n 
&\to
\mathrm{Spin}(M,g')\times_\rho \Sigma_n 
= \Sigma^{g'} M , \\   
\psi=[s,\phi]
&\mapsto
[\beta^{g}_{g'} s,\phi]  = \beta^{g}_{g'} \psi ,
\end{aligned}
\end{equation*} 
where $(\rho,\Sigma_n)$ is the complex spinor representation, and 
where $[s,\phi] \in \mathrm{Spin}(M,g) \times_\rho \Sigma_n$ denotes
the equivalence class of 
$(s,\phi) \in \mathrm{Spin}(M,g) \times \Sigma_n$ for the equivalence
relation given by the action of $\mathrm{Spin}(n)$. The map
$\beta_{g'}^g$ of Hermitian vector bundles is fiberwise an isometry.

We define the Dirac operator ${}^{g\mkern-4mu}D^{g'}$ acting on
sections of the spinor bundle for $g$ by
$$
{}^{g\mkern-4mu}D^{g'}
\definedas
(\beta^{g}_{g'})^{-1} \circ D^{g'} \circ \beta^{g}_{g'} .
$$
In \cite[Theorem 20]{bourguignon.gauduchon:92} the operator 
${}^{g\mkern-4mu}D^{g'}$ is computed in terms of $D^g$ and some extra
terms which are small if $g$ and $g'$ are close. Formulated in a way
convenient for us the relationship is 
\begin{equation} \label{relD}
{}^{g\mkern-4mu}D^{g'} \psi 
= 
D^g \psi 
+
A^{g}_{g'}(\nabla^g \psi) 
+
B^{g}_{g'}(\psi) ,
\end{equation}
where $A^{g}_{g'} \in \hom(T^*M \otimes \Sigma^{g} M, \Sigma^{g} M)$
satisfies 
\begin{equation} \label{boundA^g_g'}
| A^{g}_{g'} | \leq C | g - g' |_g , 
\end{equation}
and $B^{g}_{g'} \in \hom(\Sigma^{g} M, \Sigma^{g} M)$ satisfies 
\begin{equation} \label{boundB^g_g'}
| B^{g}_{g'} | \leq C ( | g - g' |_g + | \nabla^g (g - g') |_g ) 
\end{equation}
for some constant $C$.

In the special case that $g'$ and $g$ are conformal with $g'= F^2 g$
for a positive smooth function $F$ the formula simplifies considerably,
and one obtains  
\begin{equation} \label{confD}
{}^{g\mkern-4mu}D^{g'}( F^{-\frac{n-1}{2}} \psi) 
= 
F^{-\frac{n+1}{2}} D^g \psi,
\end{equation}
see for instance \cite{hitchin:74, baum:81}.

\subsection{Regularity results}  
\label{regularity}

By standard elliptic theory we have the following lemma (see for
example \cite[Chapter 3]{ammann:habil} where the corresponding results
of \cite{gilbarg.trudinger:77} are adapted to the Dirac operator). 

\begin{lemma}
Let $(V,g)$ be a Riemannian spin manifold and $\Omega \subset V$ an
open set with compact closure in $V$. Let also $r \in  (1,\infty)$. 
Then there is a constant $C$ so that 
\begin{equation} \label{ellip_est}
\int_{\Omega} |\nabla^g \phi|^r\,dv^g 
\leq 
C \left( \int_{\Omega} |D^g \phi|^r\,dv^g 
+ \int_{\Omega} | \phi|^r\,dv^g \right)
\end{equation}
for all $\phi \in \Gamma(\Sigma^{g} {\Omega})$ which are 
of class $C^1$ and compactly supported in $\Om$. 
\end{lemma}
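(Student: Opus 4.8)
The plan is to deduce \eref{ellip_est} from the classical Calder\'on--Zygmund $L^r$ estimates for constant-coefficient elliptic systems on $\mR^n$, patched together with a partition of unity; this is the standard route that upgrades the $L^2$ Bochner-type estimate to all exponents $r\in(1,\infty)$. First I would reduce to a local statement. Since $\overline\Om$ is compact, cover it by finitely many coordinate balls $U_1,\dots,U_m$ over which the spinor bundle is trivialized --- for instance via the Bourguignon--Gauduchon identification with the Euclidean metric of the chart, or a radially parallel frame --- and pick a subordinate partition of unity $\{\chi_j\}$ with $\sum_j\chi_j\equiv 1$ on a neighbourhood of $\overline\Om$. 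For $\phi$ as in the statement one has $\nabla^g(\chi_j\phi)=d\chi_j\otimes\phi+\chi_j\nabla^g\phi$ and $D^g(\chi_j\phi)=\grad\chi_j\cdot\phi+\chi_j D^g\phi$, so a local bound $\int_{U_j}|\nabla^g(\chi_j\phi)|^r\le C\int_{U_j}(|D^g(\chi_j\phi)|^r+|\chi_j\phi|^r)$ for each $j$ implies \eref{ellip_est} after summing, applying the triangle inequality in $L^r$, and absorbing the harmless terms $\int|d\chi_j\otimes\phi|^r$ and $\int|\grad\chi_j\cdot\phi|^r$ into $C\int_\Om|\phi|^r$.

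Next I would establish the local estimate. In a fixed chart with the bundle trivialized, $D^g$ reads $D^g u=\sum_{k=1}^n\gamma^k_g(x)\,\partial_k u+Q(x)u$ with $\gamma^k_g$, $Q$ smooth and matrix-valued, and $\gamma^k_g(x_0)$ equal to the constant Euclidean Clifford matrices $\gamma^k$ at the centre $x_0$; set $D_0\definedas\sum_k\gamma^k\partial_k$. Its symbol $\sigma(\xi)=\sum_k i\xi_k\gamma^k$ satisfies $\sigma(\xi)^2=|\xi|^2\,\mathrm{Id}$, hence is invertible for $\xi\neq0$, and the matrix-valued multipliers $m_j(\xi)\definedas i\xi_j\,\sigma(\xi)^{-1}=-|\xi|^{-2}\xi_j\sum_k\xi_k\gamma^k$ are smooth on $\mR^n\setminus\{0\}$ and homogeneous of degree $0$. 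By the Mikhlin--H\"ormander multiplier theorem (vector-valued version) each $m_j(D)$ is bounded on $L^r(\mR^n)$ for $1<r<\infty$, so $\sum_k\|\partial_k u\|_{L^r}\le C_r\|D_0 u\|_{L^r}$ for $u\in C^1_c$. Restricting to a ball $B_\delta$ of small radius around $x_0$, write $D^g=D_0+P_\delta$ there, where the principal part of $P_\delta$ has operator norm $\le\ep(\delta)$ with $\ep(\delta)\to0$ as $\delta\to0$ by continuity of the $\gamma^k_g$. For $u$ supported in $B_\delta$, comparing $|\nabla^g u|_g$ with $\sum_k|\partial_k u|$ up to zeroth-order connection and metric terms gives
$$
\|\nabla^g u\|_{L^r}\le C_r\|D_0 u\|_{L^r}+C\|u\|_{L^r}\le C_r\|D^g u\|_{L^r}+C_r\ep(\delta)\|\nabla^g u\|_{L^r}+C'\|u\|_{L^r},
$$
and choosing $\delta$ with $C_r\ep(\delta)\le\tfrac12$ lets the gradient term be absorbed into the left-hand side, yielding the desired local estimate.

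Finally I would re-cover $\overline\Om$ by finitely many such small balls, adapt the partition of unity to this cover, and sum as in the first paragraph to get \eref{ellip_est} with $C=C(V,g,\Om,r)$. The only step that is more than bookkeeping is the $L^r$-boundedness of the singular-integral operators $m_j(D)$ --- the Calder\'on--Zygmund / Mikhlin input, which is precisely what makes the estimate work for $r\neq2$ --- together with the observation that the variable-coefficient perturbation $P_\delta$ can be made small in principal part on a small enough ball, which holds because the principal symbol of $D^g$ is continuous and agrees with that of $D_0$ at the centre. Alternatively one may simply quote the interior $L^r$ estimate for first-order elliptic systems from \cite{gilbarg.trudinger:77}, as adapted to the Dirac operator in \cite[Chapter 3]{ammann:habil}, and skip the explicit multiplier computation.
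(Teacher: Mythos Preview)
Your proof is correct and follows the standard route for $L^r$ elliptic estimates: reduce to a local statement by a partition of unity, invoke Calder\'on--Zygmund/Mikhlin boundedness for the constant-coefficient Euclidean Dirac operator, and absorb the variable-coefficient remainder on small balls. There is nothing to object to in the argument.

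As for comparison with the paper: the paper does not prove this lemma at all. It is stated with the preface ``By standard elliptic theory we have the following lemma (see for example \cite[Chapter 3]{ammann:habil} where the corresponding results of \cite{gilbarg.trudinger:77} are adapted to the Dirac operator)'' and no further argument is given. Your write-up is therefore strictly more detailed than what appears in the paper, and your final paragraph --- where you note that one may simply quote the interior $L^r$ estimate from \cite{gilbarg.trudinger:77} as adapted in \cite[Chapter 3]{ammann:habil} --- is exactly what the paper does. So you have both supplied the proof the paper omits and correctly identified the citation that the paper relies on in lieu of a proof.
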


For a compact Riemannian manifold with invertible Dirac operator we
have the following special case.

\begin{lemma}
Let $(V,g)$ be a compact Riemannian spin manifold such that 
$D^g$ is invertible. Then there exists a constant $C$ such that 
\begin{equation} \label{ellip_est2}
\int_V |\nabla^g \phi|^{\frac{2n}{n+1}} \,dv^g 
\leq 
C \int_V |D^g \phi|^{\frac{2n}{n+1}} \,dv^g
\end{equation}
for all $\phi \in \Gamma(\Sigma^g V)$ of class $C^1$. 
\end{lemma}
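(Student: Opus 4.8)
The plan is to combine the elliptic estimate \eref{ellip_est} with the invertibility of $D^g$. Set $p \definedas \frac{2n}{n+1}$, which lies in $(1,\infty)$. Applying \eref{ellip_est} with $\Om = V$ --- which is allowed since $V$ is compact without boundary, so every $C^1$ spinor is compactly supported --- gives a constant $C$ with
$$
\int_V |\nabla^g\phi|^p\,dv^g \le C\left( \int_V |D^g\phi|^p\,dv^g + \int_V |\phi|^p\,dv^g \right)
$$
for all $\phi\in\Gamma(\Sigma^g V)$ of class $C^1$. Hence it suffices to absorb the zeroth order term, that is, to prove a Poincar\'e-type inequality $\|\phi\|_{L^p(V)} \le C \|D^g\phi\|_{L^p(V)}$.

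First I would note that it is enough to prove this inequality for all $\phi$ in the Sobolev space $W^{1,p}(\Sigma^g V)$: both sides depend continuously on $\phi$ in the $W^{1,p}$-norm (recall $D^g$ is a first order operator), and smooth spinors, in particular $C^1$ ones, are dense. Then I would argue by contradiction. If no such constant exists, pick $\phi_i \in W^{1,p}(\Sigma^g V)$ with $\|\phi_i\|_{W^{1,p}} = 1$ and $\|D^g\phi_i\|_{L^p} \to 0$. By the Rellich--Kondrachov theorem on the compact manifold $V$ the embedding $W^{1,p}(\Sigma^g V) \hookrightarrow L^p(\Sigma^g V)$ is compact, so after passing to a subsequence $(\phi_i)$ converges in $L^p$. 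Inserting the differences $\phi_i - \phi_j$ into \eref{ellip_est} shows that $(\phi_i)$ is Cauchy in $W^{1,p}$; denote its limit by $\phi_\infty$, so $\|\phi_\infty\|_{W^{1,p}} = 1$ while $D^g\phi_\infty = 0$ weakly. By elliptic regularity $\phi_\infty$ is smooth, and since $D^g$ is invertible its kernel is trivial, so $\phi_\infty = 0$, contradicting $\|\phi_\infty\|_{W^{1,p}} = 1$.

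With the Poincar\'e inequality in hand, substituting it into the displayed elliptic estimate above yields
$$
\int_V |\nabla^g\phi|^p\,dv^g \le C \int_V |D^g\phi|^p\,dv^g ,
$$
which is \eref{ellip_est2}. The step I expect to require the most care is the compactness argument: one must check that the weak limit really is annihilated by $D^g$, and then use elliptic regularity to pass from the hypothesis that $D^g$ has trivial kernel on smooth spinors (that is, $L^2$-invertibility) to $\phi_\infty = 0$; the remaining steps are routine given \eref{ellip_est}. As an alternative to the contradiction argument one could instead quote that $D^g\colon W^{1,p}(\Sigma^g V) \to L^p(\Sigma^g V)$ is Fredholm of index zero with trivial kernel, hence an isomorphism, which gives $\|\phi\|_{W^{1,p}} \le C\|D^g\phi\|_{L^p}$ at once.
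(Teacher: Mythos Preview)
Your argument is correct. The paper itself gives no proof of this lemma: it is simply stated as ``the following special case'' of the preceding estimate \eref{ellip_est}, with a general reference to standard elliptic theory (\cite[Chapter~3]{ammann:habil} adapting \cite{gilbarg.trudinger:77}). Your proposal supplies exactly the details one would expect behind that phrase: apply \eref{ellip_est} with $\Omega=V$ and then use invertibility of $D^g$ to absorb the $L^p$-term via a compactness/contradiction argument (equivalently, via the Fredholm isomorphism $D^g\colon W^{1,p}\to L^p$). Both the Rellich--Kondrachov step and the elliptic regularity step are standard on a closed manifold, and your remark that one must pass from weak vanishing of $D^g\phi_\infty$ to smoothness before invoking trivial kernel is the right point of care. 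Nothing is missing.
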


\subsection{The associated variational problem} 
\label{functional}

Let $(M,g)$ be a compact spin manifold of dimension~$n$ with 
$\ker D^g=\{0\}$. We define the functional $J^g$ acting on smooth
spinor fields $\psi \in \Gamma(\Sigma^g M)$ by
$$
J^g(\psi)
\definedas
\frac{\left( 
\int_M |D\psi|^{\frac{2n}{n+1}} \,dv^g 
\right)^\frac{n+1}{n}}
{\int_M \< D\psi,\psi\>\,dv^g},
$$
whenever the denominator is non-zero. Using techniques from
\cite{lott:86} it was proved in \cite{ammann:03} that    
\begin{equation} \label{funct}   
\lamin(M,g)
=
\inf_\psi J^g(\psi) ,   
\end{equation}   
where the infimum is taken over the set of smooth spinor fields
satisfying  
$$
\int_M  \< D\psi,\psi\>\,dv^g  > 0.
$$   
If $g$ and $\tilde{g}= F^2 g$ are conformal metrics on $M$ and if
$J^g$ and $J^{\tilde{g}}$ are the associated functionals, then by
Relation (\ref{confD}) one computes that 
\begin{equation} \label{funct_conf}
J^{\tilde{g}} (F^{-\frac{n-1}{2}}\psi) = J^g(\psi)
\end{equation} 
for smooth $\psi \in \Ga (\Si^g M))$.

The following result gives a universal upper bound on $\lamin(M,g)$.
\begin{prop} \label{aubin} 
Let $(M,g)$ be a compact spin manifolds of dimension $n \geq 2$.      
Then
\begin{equation} \label{aubin1}
\lamin(M,g) \leq \lamin(S^n,\sigma^n) 
= \frac{n}{2} \, \om_n^{1/n}, 
\end{equation} 
where $\om_n$ is the volume of $(S^n,\sigma^n)$.
\end{prop}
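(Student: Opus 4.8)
The plan is to establish \eref{aubin1} by an Aubin-type concentration argument, built on the variational characterization \eref{funct} of $\lamin(M,g)$ together with its conformal invariance \eref{funct_conf}. First I would record the value on the sphere: the Killing spinors $\ph$ on $(S^n,\si^n)$ satisfy $D^{\si^n}\ph=\tfrac{n}{2}\ph$ and have constant pointwise length, so inserting $\ph$ into $J^{\si^n}$ gives exactly $\tfrac{n}{2}\,\om_n^{1/n}$; since $\tfrac{n}{2}$ is the lowest positive Dirac eigenvalue of $(S^n,\si^n)$, this number is also the infimum, i.e.\ $\lamin(S^n,\si^n)=\tfrac{n}{2}\,\om_n^{1/n}$. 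Stereographic projection identifies $(S^n\setminus\{\mathrm{pt}\},\si^n)$ conformally with $(\mR^n,\xi^n)$, and under \eref{funct_conf} the Killing spinor corresponds to an explicit smooth spinor field $\psi$ on $(\mR^n,\xi^n)$ of the form $\psi(x)=\big(\tfrac{2}{1+|x|^2}\big)^{n/2}(1-x)\cdot\phi_0$, where $\phi_0$ is a parallel spinor of $\xi^n$ and $x$ acts by Clifford multiplication, still satisfying $J^{\xi^n}(\psi)=\lamin(S^n,\si^n)$. Composing $\psi$ with the dilation $x\mapsto x/\ep$ together with the natural spinorial rescaling produces a family $\psi_\ep$ which concentrates at the origin as $\ep\to 0$ and still has $J^{\xi^n}(\psi_\ep)=\lamin(S^n,\si^n)$ for every $\ep$.

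Next I would transplant these bubbles onto $M$. Fix a point $p\in M$ and a small $r_0>0$; using $g$-normal coordinates, identify $B(p,r_0)$ with $B^n(r_0)\subset\mR^n$, so that $|g-\xi^n|_{\xi^n}=O(|x|^2)$ and $|\na^{\xi^n}(g-\xi^n)|_{\xi^n}=O(|x|)$ there. Fix a cut-off $\chi$ equal to $1$ on $B(p,r_0/2)$ and supported in $B(p,r_0)$, and use the Bourguignon--Gauduchon identification $\beta^{\xi^n}_g$ of Section~\ref{bgt} to turn $\chi\,\psi_\ep$ into a smooth, compactly supported spinor field $\Psi_\ep$ on $(M,g)$. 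Because \eref{funct} yields $\lamin(M,g)\le J^g(\Psi_\ep)$ as soon as $\int_M\<D^g\Psi_\ep,\Psi_\ep\>\,dv^g>0$, it suffices to show that $J^g(\Psi_\ep)\to\lamin(S^n,\si^n)$ as $\ep\to 0$, and in particular that this denominator is positive for small $\ep$.

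The convergence reduces to two elementary, independent error estimates, which is where the actual work sits. The cut-off $\chi$ alters $\psi_\ep$ only on the fixed annulus $r_0/2\le|x|\le r_0$, away from which the bubble concentrates, and a direct computation shows that the extra terms it produces affect the numerator and denominator of $J^g(\Psi_\ep)$ only at strictly lower order than the leading terms. The discrepancy between $D^g$ and the flat operator ${}^{\xi^n}\mkern-3mu D^{g}$ is controlled by \eref{relD}: by \eref{boundA^g_g'}--\eref{boundB^g_g'} the extra terms are bounded pointwise by $C\big(|g-\xi^n|_{\xi^n}\,|\na^{\xi^n}\psi_\ep|+(|g-\xi^n|_{\xi^n}+|\na^{\xi^n}(g-\xi^n)|_{\xi^n})\,|\psi_\ep|\big)$, and after the substitution $x=\ep y$ the relevant integrals pick up a factor $O(\ep^2)$, since the mass of $\psi_\ep$ concentrates at scale $\ep$ where $|g-\xi^n|_{\xi^n}=O(\ep^2)$; the comparison of the volume forms and of the connections in normal coordinates contributes errors of the same type. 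Hence $J^g(\Psi_\ep)$ agrees with $J^{\xi^n}(\psi_\ep)=\lamin(S^n,\si^n)$ up to an error tending to $0$, and since $\int_{\mR^n}\<D^{\xi^n}\psi,\psi\>\,dv^{\xi^n}>0$ the denominator of $J^g(\Psi_\ep)$ is positive for small $\ep$; this proves \eref{aubin1}. I expect the only mildly delicate point to be the uniform bookkeeping of these rescaled error terms; the conceptual content is supplied entirely by \eref{funct_conf}, which turns the constant $\lamin(S^n,\si^n)$ into a purely Euclidean quantity insensitive to the local geometry of $M$. (This is in essence the argument of \cite{ammann:03}; compare also \cite{ammann.grosjean.humbert.morel:p07}.)
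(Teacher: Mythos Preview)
Your proposal is correct and is exactly the approach the paper itself points to. The paper does not give a self-contained proof of Proposition~\ref{aubin}; immediately after the statement it refers to \cite{ammann:03} for $n\geq 3$ (``geometric methods'') and then says that ``another method that yields the proposition in full generality is to construct for any $p\in M$ and $\ep>0$ a suitable test spinor field $\psi_\ep$ supported in $B^g(p,\ep)$ satisfying $J^g(\psi_\ep)\leq \lamin(S^n,\si^n)+o(\ep)$,'' citing \cite{ammann.grosjean.humbert.morel:p07}. Your concentration argument with the rescaled Killing spinor transplanted via normal coordinates and the Bourguignon--Gauduchon map is precisely this second method, carried out with the correct ingredients (the explicit Euclidean model spinor, the dilations, the cut-off, and the error control from \eref{relD}--\eref{boundB^g_g'}).

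Two small remarks. First, the variational characterization \eref{funct} is stated in the paper only under the hypothesis $\ker D^g=\{0\}$; you should note that when $\ker D^g\neq\{0\}$ one has $\lamin(M,g)=0$ and \eref{aubin1} is trivial, so the test-spinor argument is only needed in the invertible case. Second, for \eref{aubin1} you only need $J^g(\Psi_\ep)=\lamin(S^n,\si^n)+o(1)$; your claimed rate $O(\ep^2)$ is stronger than necessary and in fact requires more careful expansion (as in \cite{ammann.grosjean.humbert.morel:p07}), but the weaker $o(1)$ statement already follows from your outline and suffices here.
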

Proposition \ref{aubin} was proven for $n\geq 3$ in \cite{ammann:03}
using geometric methods. In the case $n=2$ the article
\cite{ammann:03} only provides a proof if $\ker D=\{0\}$. Another
method that yields the proposition in full generality is to construct
for any $p\in M$ and $\ep>0$ a suitable test spinor field $\psi_\ep$
supported in $B^g(p,\ep)$ satisfying 
$J^g (\psi_\ep)\leq \lamin(S^n,\si^n)+o(\ep)$, 
see \cite{ammann.grosjean.humbert.morel:p07} for details.

If Inequality (\ref{aubin1}) holds strictly then one can show that the
infimum in Equation (\ref{funct}) is attained by a spinor field $\phi$. 
The following theorem will be a central ingredient in the proof of
Theorem \ref{main}. 
\begin{theorem}[\cite{ammann:p03, ammann:habil}] 
\label{attained}
Let $(M,g)$ be a compact spin manifold of dimension $n$ for which 
Inequality (\ref{aubin1}) holds strictly. Then there exists a spinor
field  
$\phi \in C^{2,\alpha}(\Si M) \cap 
C^{\infty}(\Si M \setminus \phi^{-1}(0))$ where 
$\al\in(0,1)\cap (0,2/(n-1)]$ such that
$\| \phi \|_{L^\frac{2n}{n-1}(M)} = 1$ and
$$ 
D\phi
= 
\lamin(M,g) |\phi|^{\frac{2}{n-1}} \phi.
$$
\end{theorem}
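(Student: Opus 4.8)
The plan is to establish Theorem~\ref{attained} by a direct variational argument, treating the functional $J^g$ (equivalently, its conformally invariant reformulation) via a subcritical approximation scheme. First I would recast the problem: by \eref{funct} the quantity $\lamin(M,g)$ is the infimum of $J^g$, and the Euler--Lagrange equation of $J^g$ is, up to normalization, exactly $D\phi = \lamin(M,g)|\phi|^{2/(n-1)}\phi$ with $\|\phi\|_{L^{2n/(n-1)}}=1$; the exponent $\frac{2n}{n-1}$ is the critical Sobolev exponent for the embedding $H^{1/2}\hookrightarrow L^{2n/(n-1)}$ associated with $D$, which is why the direct method does not immediately apply. So I would introduce for $p$ slightly below the critical value $\frac{2n}{n-1}$ the perturbed functional (or equivalently consider minimizing $\int\<D\psi,\psi\>$ subject to $\|\,|D\psi|^{\cdots}\|=1$ with a subcritical power), obtain a minimizer $\phi_p$ solving the subcritical equation $D\phi_p = \lambda_p |\phi_p|^{p-2}\phi_p$ by standard compactness (the subcritical embedding is compact on the compact manifold $M$), and record that $\lambda_p \to \lamin(M,g)$ as $p \uparrow \frac{2n}{n-1}$.

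The heart of the argument is then the passage to the limit $p \uparrow \frac{2n}{n-1}$, where one must rule out concentration (bubbling). Here is where the strict inequality hypothesis \eref{aubin1}, i.e. $\lamin(M,g) < \lamin(S^n,\sigma^n) = \frac{n}{2}\omega_n^{1/n}$, does its work. The plan is a concentration-compactness dichotomy in the spirit of Lions: the $L^{2n/(n-1)}$-mass of $\phi_p$ either concentrates at finitely many points or stays spread out. A blow-up analysis at a concentration point, rescaling the metric and the spinor, produces in the limit a nonzero solution on flat $\mathbb{R}^n$ (or on $S^n$ after stereographic projection, using the conformal covariance \eref{confD}) of the model equation $D\phi = \Lambda|\phi|^{2/(n-1)}\phi$; the conformal invariance of the functional forces the energy of such a bubble to be at least that of the round sphere, i.e. it costs at least $\lamin(S^n,\sigma^n)$. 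Combined with $\lambda_p \to \lamin(M,g) < \lamin(S^n,\sigma^n)$, a bubble cannot form, so no mass is lost, $\phi_p$ converges strongly in $L^{2n/(n-1)}$ (after possibly passing to a subsequence and using elliptic estimates for $D$ to upgrade the convergence), and the limit $\phi$ is a nonzero weak solution with $\|\phi\|_{L^{2n/(n-1)}} = 1$ and $D\phi = \lamin(M,g)|\phi|^{2/(n-1)}\phi$.

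It remains to address regularity, namely that $\phi \in C^{2,\alpha}(\Sigma M) \cap C^\infty(\Sigma M \setminus \phi^{-1}(0))$ with $\alpha \in (0,1)\cap(0,2/(n-1)]$. Away from the zero set of $\phi$ the nonlinearity $|\phi|^{2/(n-1)}$ is smooth in $\phi$, so elliptic bootstrapping for the Dirac operator (using \eref{ellip_est} and Schauder estimates) gives $C^\infty$ regularity there. At points where $\phi$ vanishes the right-hand side $|\phi|^{2/(n-1)}\phi$ is only Hölder continuous of exponent $1 + \frac{2}{n-1}$ in the spinor, which limits the global regularity: starting from $\phi \in L^{2n/(n-1)}$ and $D\phi \in L^{\,2n/(n+1)}$, one iterates $L^r$-elliptic estimates \eref{ellip_est} to reach $\phi \in C^{0,\alpha}$, then $D\phi \in C^{0,\alpha'}$ for $\alpha' = \alpha(1 + \frac{2}{n-1})$ capped at the Schauder threshold, and a final application of Schauder theory yields $\phi \in C^{2,\alpha}$ with the stated restriction on $\alpha$. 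The main obstacle is clearly the exclusion of bubbling in the critical limit: one must carry out the blow-up analysis carefully, identify the bubble limit as a solution on $S^n$ via \eref{confD}, and invoke (or reprove) the sharp lower energy bound $\lamin(S^n,\sigma^n)$ for such solutions — this is exactly the point where the known inequality $\lamin(M,g) \leq \lamin(S^n,\sigma^n)$ must be strict, and it is the analogue of Aubin's $\varepsilon$-argument in the Yamabe problem.
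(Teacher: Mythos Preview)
The paper does not contain a proof of this statement; Theorem~\ref{attained} is quoted from the external references \cite{ammann:p03, ammann:habil} and used as a black box. Your sketch --- subcritical regularization to obtain minimizers $\phi_p$, a concentration--compactness/blow-up argument in which the strict inequality \eref{aubin1} excludes bubbling (the bubble, pulled back to $S^n$ via \eref{confD}, would cost at least $\lamin(S^n,\sigma^n)$), and then an elliptic/Schauder bootstrap whose global H\"older exponent is capped by the regularity of $|\cdot|^{2/(n-1)}$ at the zero set --- is correct and is precisely the strategy carried out in those references.
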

Furthermore the infimum in the definition of $\lamin(M,g)$ is
attained by the generalized conformal metric $\ti g=|\phi|^{4/(n-1)}g$, 
see \cite{ammann:p03} for details.

\section{Preparations for proofs}

\subsection{Removal of singularities}

The following theorem gives a condition for when singularities of
solutions to Dirac equations can be removed. 
\begin{theorem} \label{thm.removal}
Let $(V,g)$ be a (not necessarily complete) Riemannian spin manifold 
and let $S$ be a compact submanifold of $V$ of codimension 
$m \geq 2$. Assume that $\phi\in L^p(\Si(V\setminus S))$, 
$p\geq m/(m-1)$, satisfies the equation
$$
D\phi = \rho
$$
weakly on $V\setminus S$ where 
$\rho\in L^1(\Si(V\setminus S)) = L^1(\Si V)$. Then this equation
holds weakly on $V$. In particular the singular support of the 
distribution $D\phi$ is empty. 
\end{theorem}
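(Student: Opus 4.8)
The plan is to prove the removal of singularities statement by a cutoff argument, testing the equation against smooth compactly supported spinors and showing that the contribution from a neighbourhood of the singular set $S$ vanishes in the limit.

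First I would set up the weak formulation. The equation $D\phi = \rho$ holding weakly on $V\setminus S$ means that $\int_{V\setminus S} \<\phi, D\psi\>\,dv^g = \int_{V\setminus S}\<\rho,\psi\>\,dv^g$ for all $\psi\in\Gamma_c(\Sigma(V\setminus S))$. I need to extend the class of allowed test spinors to all $\psi\in\Gamma_c(\Sigma V)$ (compactly supported near or on $S$). Fix such a $\psi$; since $\rho\in L^1(\Sigma V)$ and $\phi\in L^p$ with $p\ge m/(m-1)$, both integrals over $V\setminus S$ converge absolutely and equal the corresponding integrals over $V$. So it suffices to show $\int_V\<\phi, D\psi\>\,dv^g = \int_V\<\rho,\psi\>\,dv^g$.

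The key step is to introduce logarithmic cutoff functions $\chi_\delta$ supported away from $S$, equal to $1$ outside $U^g(S,\sqrt\delta)$, equal to $0$ on $U^g(S,\delta)$, with $|\nabla^g\chi_\delta|\le C/(r\log(1/\delta))$ where $r$ is the distance to $S$ — this is the standard device exploiting that $S$ has codimension $m\ge 2$, so that the capacity-type integral $\int_{U^g(S,\sqrt\delta)\setminus U^g(S,\delta)} |\nabla\chi_\delta|^m\,dv^g$ is bounded (it behaves like $1/(\log(1/\delta))^{m-1}\to 0$). Apply the weak equation with test spinor $\chi_\delta\psi$, which is legitimate since it is supported in $V\setminus S$:
\begin{equation*}
\int_V \<\phi, D(\chi_\delta\psi)\>\,dv^g = \int_V \<\rho, \chi_\delta\psi\>\,dv^g.
\end{equation*}
Now $D(\chi_\delta\psi) = \chi_\delta D\psi + \nabla^g\chi_\delta\cdot\psi$ (Clifford multiplication of the gradient). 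The right-hand side converges to $\int_V\<\rho,\psi\>$ by dominated convergence ($|\chi_\delta|\le 1$, $\chi_\delta\to 1$ a.e., $\rho\in L^1$). The term $\int_V\<\phi,\chi_\delta D\psi\>$ converges to $\int_V\<\phi,D\psi\>$ likewise. The remaining error term $\int_V\<\phi, \nabla^g\chi_\delta\cdot\psi\>\,dv^g$ must be shown to vanish as $\delta\to 0$. Its integrand is supported in the annular region $A_\delta \definedas U^g(S,\sqrt\delta)\setminus U^g(S,\delta)$, and by Hölder with exponents $p$ and $p' = p/(p-1)\le m$ (here I use $p\ge m/(m-1)$, i.e. $p'\le m$),
\begin{equation*}
\left|\int_{A_\delta}\<\phi,\nabla^g\chi_\delta\cdot\psi\>\,dv^g\right|
\le \|\psi\|_{L^\infty}\,\|\phi\|_{L^p(A_\delta)}\,\|\nabla^g\chi_\delta\|_{L^{p'}(A_\delta)}.
\end{equation*}
Here $\|\phi\|_{L^p(A_\delta)}\to 0$ since $\bigcap_\delta A_\delta$ has measure zero and $\phi\in L^p$, while $\|\nabla^g\chi_\delta\|_{L^{p'}(A_\delta)}$ stays bounded (when $p' = m$) or even tends to $0$ (when $p' < m$, directly from the pointwise bound and $\mathrm{Vol}(A_\delta)\to 0$); in the borderline case $p' = m$ one needs the logarithmic cutoff precisely to keep this factor bounded. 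Either way the product tends to $0$.

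The main obstacle I expect is the borderline exponent $p = m/(m-1)$, where a naive power-function cutoff $\chi_\delta = \min(1,\max(0,\ldots))$ with $|\nabla\chi_\delta|\sim 1/\delta$ on a band of width $\delta$ gives $\|\nabla\chi_\delta\|_{L^m}^m \sim \delta^{-m}\cdot\mathrm{Vol}(A_\delta)$, and $\mathrm{Vol}(A_\delta)\sim \delta^m$ only cancels this to a constant, not to something small — which is why the $\phi\in L^p(A_\delta)\to 0$ decay must carry the argument, and why one needs the logarithmic cutoff so that this bounded geometric factor is actually available. A secondary technical point is justifying the integration by parts identity $D(\chi_\delta\psi) = \chi_\delta D\psi + \nabla\chi_\delta\cdot\psi$ and the legitimacy of using a merely Lipschitz (not smooth) cutoff as a test spinor, which is routine by a further mollification but should be mentioned. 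Once the error term is controlled, comparing the two limits gives $\int_V\<\phi,D\psi\> = \int_V\<\rho,\psi\>$ for all $\psi\in\Gamma_c(\Sigma V)$, which is exactly the assertion that $D\phi=\rho$ weakly on all of $V$; in particular $D\phi$ has no singular part along $S$.
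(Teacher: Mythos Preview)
Your proof is correct and follows essentially the same cutoff-and-H\"older strategy as the paper. The one point worth correcting: the paper uses the \emph{simple} cutoff with $|\nabla\chi_\ep|\le 2/\ep$ on the annulus $U(S,2\ep)\setminus U(S,\ep)$, not a logarithmic one, and this already suffices. Your remark that ``one needs the logarithmic cutoff precisely to keep this factor bounded'' in the borderline case $p'=m$ is mistaken: with the simple cutoff one has $\|\nabla\chi_\ep\|_{L^m}^m \lesssim \ep^{-m}\cdot\vol(U(S,2\ep))\sim \ep^{-m}\cdot\ep^m = O(1)$, which is exactly the bounded geometric factor you want, and then $\|\phi\|_{L^p(U(S,2\ep))}\to 0$ (absolute continuity of the integral) finishes the estimate --- as you yourself note. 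The logarithmic cutoff makes the geometric factor actually tend to zero, which is more than is needed here; it is the device one uses when $\phi$ is only assumed bounded in $L^p$ on the annuli rather than globally $L^p$, but under the present hypotheses the simpler cutoff of the paper does the job.
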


\begin{proof}
Let $\psi$ be a smooth compactly supported spinor. We have to show
that  
\begin{equation} \label{eqn.thm.removal}
\int_V \<\phi,D \psi\> \,dv =  \int_V \< \rho,\psi\> \,dv.
\end{equation}
Recall that for $\ep>0$ we denote the set of points in $V$ of
distance less than $\ep$ to $S$ by $U(S,\ep)$. 
We choose a smooth cut-off function
$\chi_\ep:V\to[0,1]$ with support in 
$U(S,2\ep)$, $\chi_\ep = 1$ on $U(S,\ep)$, and 
$|\grad \chi_\ep| \leq 2/\ep$. We then have 
\begin{equation*}
\begin{split}
\int_V \<\phi,D \psi\> \,dv - \int_V \<\rho,\psi\> \,dv 
&=
\int_V \< \phi,D ( (1-\chi_\ep) \psi + \chi_\ep \psi) \> \,dv
-
\int_V \< \rho,\psi \> \,dv\\
&=
\int_V \< D\phi , (1-\chi_\ep) \psi \> \,dv
+\int_V \< \phi,\chi_\ep D \psi \> \,dv \\
&\quad 
+\int_V\<\phi,\grad \chi_\ep \cdot \psi\>\,dv 
-\int_V\<\rho,\psi\> \,dv \\
&=  
-\int_V \<\rho,\chi_\ep \psi \> \,dv
+\int_V \< \phi,\chi_\ep D \psi\>\,dv \\
&\quad
+\int_V \<\phi,\grad \chi_\ep \cdot \psi\> \,dv, 
\end{split} 
\end{equation*}
where $D\phi = \rho$ is used in the last equality. Let $q$ be related
to $p$ via $1/q+1/p=1$. It follows that
\begin{equation*}
\begin{split}
\left| \int_V \<\phi,D \psi\> \,dv
- \int_V \<\rho,\psi\> \,dv \right|
&\leq  
\left( \sup_{U(S,2\ep)} |\psi|\right) 
\int_{U(S,2\ep)} |\rho| \, dv \\
&\quad
+
\left( \sup_{U(S,2\ep)} |D\psi| +
\frac{2}{\ep} \sup_{U(S,2\ep)} |\psi| \right) 
\int_{U(S,2\ep)} |\phi| \,dv  \\
&\leq 
o(1) + 
\frac{C}{\ep} \|\phi\|_{L^p( U(S,2\ep) )} 
\vol(U(S,2\ep))^{1/q} \\
&\leq 
o(1) +  
C  \|\phi\|_{L^p( U(S,2\ep) )} \ep^{(m/q) - 1},
\end{split}
\end{equation*}
where $o(1)$ denotes a term tending to $0$ as $\ep\to 0$. Since 
$p \geq m/(m-1)$ is equivalent to $m/q \geq 1$ we see that 
(\ref{eqn.thm.removal}) holds.
\end{proof}
Applying Theorem \ref{thm.removal} to the non-linear Dirac equation in
Theorem \ref{attained} we get the following corollary.  
\begin{corollary} \label{removal}
Let $V$ and $S$ be as in Theorem \ref{thm.removal}. Then any
$L^p$-solution, $p=2n/(n-1)$, of
\begin{equation} \label{eq.singsol}
D\phi = \la |\phi|^{p-2} \phi
\end{equation}
on $V\setminus S$ is also a weak $L^p$-solution of (\ref{eq.singsol})
on $V$.
\end{corollary}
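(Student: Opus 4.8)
The plan is to obtain Corollary~\ref{removal} as a direct application of Theorem~\ref{thm.removal}. The only thing that needs checking is that the right-hand side of the nonlinear equation~\eref{eq.singsol}, namely $\rho \definedas \la|\phi|^{p-2}\phi$, actually lies in $L^1(\Si(V\setminus S))$, so that the hypotheses of Theorem~\ref{thm.removal} are met with the given exponent $p = 2n/(n-1)$.

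First I would verify the integrability of $\rho$. Pointwise $|\rho| = |\la|\,|\phi|^{p-1}$, so $\int_{V\setminus S} |\rho| = |\la| \int_{V\setminus S} |\phi|^{p-1}$. Since $\phi \in L^p(\Si(V\setminus S))$ and, in the relevant situation, $S$ is compact and hence $V\setminus S$ has a neighbourhood of $S$ of finite volume while away from $S$ we are on a compact piece, one concludes $\phi \in L^{p-1}_{\mathrm{loc}}$; more cleanly, on the region where the removal is performed (a bounded neighbourhood of the compact $S$) H\"older's inequality with exponents $\frac{p}{p-1}$ and $p$ gives $\int |\phi|^{p-1} \le \|\phi\|_{L^p}^{p-1}\,\vol(\,\cdot\,)^{1/p} < \infty$. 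Hence $\rho \in L^1(\Si(V\setminus S)) = L^1(\Si V)$, the last identification being valid because $S$ has measure zero (it has codimension $m\ge 2$).

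Next I would check the exponent condition. Theorem~\ref{thm.removal} requires $p \ge m/(m-1)$. Here $p = 2n/(n-1)$ and the codimension is $m \ge 2$; since $t \mapsto t/(t-1)$ is decreasing, $m/(m-1) \le 2/(2-1) = 2$, while $2n/(n-1) = 2 + 2/(n-1) > 2 \ge m/(m-1)$, so the condition holds for every admissible $m$ and $n$. With $\phi \in L^p(\Si(V\setminus S))$ satisfying $D\phi = \rho$ weakly on $V\setminus S$ and $\rho \in L^1(\Si V)$, Theorem~\ref{thm.removal} applies verbatim and tells us that $D\phi = \rho$ holds weakly on all of $V$; that is precisely the assertion that $\phi$ is a weak $L^p$-solution of~\eref{eq.singsol} on $V$.

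There is essentially no obstacle here: the corollary is a bookkeeping exercise matching the nonlinear equation to the linear removal theorem. The only mild point to be careful about is the $L^1$-bound on the nonlinearity $|\phi|^{p-1}$, which is where the compactness of $S$ (equivalently, the finiteness of the volume of the neighbourhood on which one localizes) enters; everything else is a numerical comparison of exponents.
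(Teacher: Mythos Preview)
Your proposal is correct and follows exactly the approach intended by the paper, which merely states that the corollary is obtained by applying Theorem~\ref{thm.removal} to the nonlinear Dirac equation. You have simply spelled out the two hypotheses that need checking---the exponent inequality $p=2n/(n-1)>2\geq m/(m-1)$ and the local integrability of $\rho=\la|\phi|^{p-2}\phi$ via H\"older---both of which are routine, so there is nothing to add.
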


\subsection{Limit spaces and limit solutions}

In the proofs of the main theorems we will construct limit solutions
of a Dirac equation on certain limit spaces. For this we need the following
two lemmas. In the statement of these results, in order to simplify the notations, we write $\al \to 0$
instead of $\al_i \to 0$ as $i \to \infty$ when $(\al_i)_{i \in \mN}$ is a
sequence of positive numbers converging to $0$. In the same way, the
subsequences of $(\al_i)$ will also be denoted by $(\al)$.

\begin{lemma} \label{diffeom}
Let $V$ be an $n$-dimensional manifold. Let $(p_{\al})$
be a sequence of points in $V$ which converges to a point $p$ as 
$\al \to 0$. Let $(\ga_\al)$ be a sequence of metrics defined on a 
neighbourhood $O$ of $p$ which converges to a metric $\ga_0$ in 
the $C^2(O)$-topology. Finally, let $(b_\al)$ be a sequence of
positive real numbers such that $\lim_{\al \to 0} b_\al = \infty$. 
Then for $r>0$ there exists for $\alpha$ small enough a diffeomorphism  
$$
\Theta_\al: 
B^n(r)
\to
B^{\ga_\al} (p_\al, b_\al^{-1} r)
$$
with $\Theta_\al(0)= p_\al$ such that the metric 
$\Theta_{\al}^*(b_\al^2 \ga_\al)$ tends to the  
Euclidean metric $\xi^n$ in $C^1(B^n(r))$. 
\end{lemma}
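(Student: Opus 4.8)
The plan is to produce the diffeomorphism $\Theta_\al$ as a composition of three maps: first use normal coordinates for the fixed limit metric $\ga_0$ to identify a small ball in $V$ around $p$ with a Euclidean ball; then translate the center from $p$ to $p_\al$; and finally rescale by the factor $b_\al$. The point is that all the analytic work is concentrated in controlling how the metric transforms under each step, and the key fact to exploit is that $C^2$-convergence $\ga_\al\to\ga_0$ together with the blow-up $b_\al\to\infty$ forces the rescaled pulled-back metrics to flatten out.

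First I would fix geodesic normal coordinates $x: B^n(\delta)\to O'\subset O$ for the metric $\ga_0$ centered at $p$, so that $x^*\ga_0$ equals the Euclidean metric $\xi^n$ at the origin with vanishing first derivatives there, and in particular $x^*\ga_0 \to \xi^n$ in $C^1$ as one shrinks the ball. Since $p_\al\to p$, for $\al$ small the point $p_\al$ lies in $O'$ and has normal coordinate $x(p_\al)=:a_\al\to 0$. Define the preliminary map $\Psi_\al(y):= x^{-1}(a_\al + y)$ on a fixed ball $B^n(\delta/2)$; this is a diffeomorphism onto a neighbourhood of $p_\al$ with $\Psi_\al(0)=p_\al$, and $\Psi_\al^*\ga_0$ converges in $C^1$ (indeed $C^2$) on $B^n(\delta/2)$ to $(x^{-1})^*\ga_0$ translated, which at the origin is close to $\xi^n$; more importantly, as we next rescale, this error will be killed. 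Now set
$$
\Theta_\al(z) \definedas \Psi_\al(b_\al^{-1} z), \qquad z\in B^n(r),
$$
which for $\al$ large enough (so that $b_\al^{-1}r < \delta/2$) is a well-defined diffeomorphism onto its image, with $\Theta_\al(0)=p_\al$, and whose image is a coordinate ball shrinking to $p_\al$. One checks that this image is contained in, and eventually exhausts the relevant part of, the geodesic ball $B^{\ga_\al}(p_\al, b_\al^{-1}r)$ up to a reparametrization of $r$ — since the metrics $\ga_\al$ are uniformly close to $\ga_0$, geodesic balls of radius $b_\al^{-1}r$ differ from coordinate balls of comparable radius by a factor tending to $1$, so after a harmless adjustment of the radius we may assume $\Theta_\al$ maps onto $B^{\ga_\al}(p_\al,b_\al^{-1}r)$ exactly.

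It remains to estimate $\Theta_\al^*(b_\al^2\ga_\al)$ in $C^1(B^n(r))$. Write $\Theta_\al^*(b_\al^2\ga_\al) = b_\al^2\,\Psi_\al^*\ga_\al$ evaluated after the dilation $z\mapsto b_\al^{-1}z$; in coordinates, if $(\Psi_\al^*\ga_\al)_{ij}(y)$ are the components, then
$$
\bigl(\Theta_\al^*(b_\al^2\ga_\al)\bigr)_{ij}(z) = (\Psi_\al^*\ga_\al)_{ij}(b_\al^{-1}z),
$$
because the factor $b_\al^2$ exactly cancels the two factors of $b_\al^{-1}$ from the differential of the dilation. Since $\ga_\al\to\ga_0$ in $C^2(O)$ and $\Psi_\al$ converges smoothly, $(\Psi_\al^*\ga_\al)_{ij}$ converges in $C^1$ on a fixed ball to $(\Psi_0^*\ga_0)_{ij}$, which by our choice of normal coordinates equals $\delta_{ij}$ at $y=0$. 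Therefore $(\Psi_\al^*\ga_\al)_{ij}(b_\al^{-1}z)\to\delta_{ij}$ uniformly on $B^n(r)$ since $b_\al^{-1}z\to 0$; and for the first derivatives, $\pa_k\bigl[(\Psi_\al^*\ga_\al)_{ij}(b_\al^{-1}z)\bigr] = b_\al^{-1}(\pa_k(\Psi_\al^*\ga_\al)_{ij})(b_\al^{-1}z)$, which tends to $0$ uniformly because the first-derivative terms are bounded (by $C^2$-convergence) while the prefactor $b_\al^{-1}\to 0$. This gives $\Theta_\al^*(b_\al^2\ga_\al)\to\xi^n$ in $C^1(B^n(r))$, as required.

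\textbf{The main obstacle} I anticipate is the bookkeeping around the image: making precise that the coordinate ball $\Theta_\al(B^n(r))$ really is (or can be arranged to be) the geodesic ball $B^{\ga_\al}(p_\al, b_\al^{-1}r)$, rather than merely comparable to it. The clean way around this is to not insist on equality at the outset: one shows the coordinate ball of radius $b_\al^{-1}r$ contains $B^{\ga_\al}(p_\al, (1-o(1))b_\al^{-1}r)$ and is contained in $B^{\ga_\al}(p_\al, (1+o(1))b_\al^{-1}r)$, using that $\ga_\al$ is uniformly close to $\ga_0$ and hence to $\xi^n$ in normal coordinates; then replace $r$ by a slightly $\al$-dependent radius $r_\al\to r$, or equivalently pre-compose with a dilation by $1+o(1)$, which does not affect the $C^1$-convergence of the rescaled metric. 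All other steps are routine once the normal-coordinate setup and the cancellation of $b_\al$-powers under dilation are in place.
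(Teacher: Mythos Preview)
Your argument for the $C^1$-convergence of the rescaled pulled-back metric is correct and in fact more detailed than what the paper offers. The paper, however, takes a slightly different and cleaner route: it sets
\[
\Theta_\al(x) \;=\; \exp^{\ga_\al}_{p_\al}\bigl(b_\al^{-1}x\bigr),
\]
using the exponential map for the \emph{varying} metric $\ga_\al$ at the \emph{varying} basepoint $p_\al$ (with $T_{p_\al}V$ identified with $\mR^n$ via a $\ga_\al$-orthonormal frame), rather than a fixed $\ga_0$-normal chart at $p$ followed by a translation. The metric estimate then proceeds exactly as you wrote, using that in $\ga_\al$-normal coordinates one has $g^\al_{ij}(0)=\de_{ij}$, $\pa_k g^\al_{ij}(0)=0$, and a uniform bound on the second derivatives coming from the $C^2$-closeness of the $\ga_\al$ to $\ga_0$.

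The payoff of the paper's choice is precisely the point you flag as your ``main obstacle'': with $\exp^{\ga_\al}_{p_\al}$ the image of $B^n(r)$ is \emph{by definition} the geodesic ball $B^{\ga_\al}(p_\al,b_\al^{-1}r)$, so no bookkeeping is needed. By contrast, your coordinate ball (a Euclidean ball in $\ga_0$-normal coordinates, translated to $a_\al$) is not a $\ga_\al$-geodesic ball, and your proposed fix of pre-composing with a dilation by $1+o(1)$ only changes the Euclidean radius---it cannot turn a Euclidean ball into a $\ga_\al$-geodesic ball, so exact equality of the image with $B^{\ga_\al}(p_\al,b_\al^{-1}r)$ is not achieved this way. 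For the applications of the lemma in the paper this discrepancy is harmless (only containments up to factors $1+o(1)$ are ever used), but if you want the statement exactly as written, switching to $\exp^{\ga_\al}_{p_\al}$ is the simplest remedy.
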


\begin{proof}
Denote by $\exp^{\ga_\al}_{p_\al}: U_\al \to O_\al$ the exponential
map at the point $p_\al$ defined with respect to the metric
$\ga_\al$. Here $O_\al$ is a neighbourhood of $p_\al$ in $V$ and
$U_{\al}$ is a neighbourhood of the origin in $\mR^n$. We set 
$$ 
\Theta_\al:  
B^n(r) \ni x
\mapsto
\exp^{\ga_\al}_{p_\al}( b_\al^{-1} x) \in
B^{\ga_\al} (p_\al, b_\al^{-1} r) .
$$
It is easily checked that $\Theta_\al$ is the desired diffeomorphism.
\end{proof}

\begin{lemma}\label{lim_sol}
Let $V$ an $n$-dimensional spin manifold. Let $(g_\al)$ be a sequence 
of metrics which converges to a metric $g$ in $C^1$ on all compact
sets $K \subset V$ as $\al \to 0$. Assume that $(U_\al)$ is an
increasing sequence of subdomains of $V$ such that 
$\cup_{\al} U_\al = V$. Let 
$\psi_\al \in \Gamma(\Sigma^{g_\al} U_\al)$ be a sequence of 
spinors of class $C^1$ such that 
$\| \psi_{\al} \|_{L^\infty(U_\al)} \leq C$ where $C$ does 
not depend on $\al$, and 
\begin{equation} \label{eqal}
D^{g_{\al}} \psi_\al = \la_\al |\psi_\al|^{\frac{2}{n-1}} \psi_\al
\end{equation}
where the $\la_\al$ are positive numbers which tend to 
$\bar{\la} \geq 0$. Then there exists a spinor 
$\psi \in \Gamma(\Sigma^g V)$ of class $C^1$ such that 
\begin{equation} \label{eq_limit}
D^g \psi 
= 
\bar{\la} |\psi|^{\frac{2}{n-1}} \psi
\end{equation} 
on $V$ and a subsequence of $(\beta_g^{g_\al} \psi_\al )$ 
tends to $\psi$ in $C^0(K)$  for any compact set $K \subset {V}$.
In particular
\begin{equation} \label{norminf_lim}
\| \psi \|_{L^\infty(K)} 
= 
\lim_{\al \to 0} \| \psi_\al \|_{L^\infty(K)},
\end{equation} 
and 
\begin{equation} \label{normlr_lim}
\int_K |\psi|^r \,dv^g 
= 
\lim_{\al \to 0} \int_K |\psi_\al|^r \,dv^{g_\al}
\end{equation}
for any compact set $K$ and any $r \geq 1$.
\end{lemma}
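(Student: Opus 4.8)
The plan is to transport everything to the fixed spinor bundle $\Sigma^g V$ via the Bourguignon--Gauduchon identification of Section~\ref{bgt}, to establish uniform local elliptic estimates, to extract a convergent subsequence by Arzel\`a--Ascoli and a diagonal argument over an exhaustion of $V$, and finally to pass to the limit in the equation.

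Set $\ti\psi_\al\definedas\beta_g^{g_\al}\psi_\al\in\Gamma(\Si^g U_\al)$, which is again of class $C^1$; since $\beta_g^{g_\al}$ is a fiberwise isometry, $|\ti\psi_\al|_g=|\psi_\al|_{g_\al}$ pointwise. Applying $\beta_g^{g_\al}$ to \eref{eqal} and using \eref{relD} turns the equation into
$$
D^g\ti\psi_\al+A^g_{g_\al}(\nabla^g\ti\psi_\al)+B^g_{g_\al}(\ti\psi_\al)
=\la_\al|\ti\psi_\al|^{\frac{2}{n-1}}\ti\psi_\al,
$$
where, by \eref{boundA^g_g'}, \eref{boundB^g_g'} and the $C^1$-convergence $g_\al\to g$, the tensors $A^g_{g_\al}$ and $B^g_{g_\al}$ tend to $0$ uniformly on every compact subset of $V$. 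Fix an exhaustion $K_1\subset K_2\subset\cdots$ of $V$ by compact sets; for each $j$ we have $K_j\subset U_\al$ once $\al$ is small, since the $U_\al$ increase to $V$. Choosing $\chi\in C^\infty_c(V)$ with $\chi\equiv1$ near $K_j$, applying the interior estimate \eref{ellip_est} for the \emph{fixed} metric $g$ (hence with a constant independent of $\al$) to $\chi\ti\psi_\al$, and estimating $D^g(\chi\ti\psi_\al)=\chi D^g\ti\psi_\al+\grad\chi\cdot\ti\psi_\al$ via the displayed equation, one finds that every term on the right is controlled by $\|\ti\psi_\al\|_{L^\infty}\le C$ and $\|B^g_{g_\al}\|_\infty$, except $\|A^g_{g_\al}(\nabla^g(\chi\ti\psi_\al))\|_{L^r}\le\|A^g_{g_\al}\|_{L^\infty(\mathrm{supp}\,\chi)}\|\nabla^g(\chi\ti\psi_\al)\|_{L^r}$, which, because $\|A^g_{g_\al}\|_{L^\infty(\mathrm{supp}\,\chi)}\to0$, is absorbed into the left-hand side for $\al$ small. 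Thus $\|\ti\psi_\al\|_{W^{1,r}(K_j)}$ is bounded uniformly in $\al$, for every $r<\infty$. This uniform estimate --- in which the point is to control the first-order perturbation $A^g_{g_\al}(\nabla^g\ti\psi_\al)$ of the varying operators ${}^g D^{g_\al}$ by the absorption trick --- is the step requiring genuine care; the rest is routine once the problem sits on the fixed metric $g$.

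By Sobolev embedding ($W^{1,r}\hookrightarrow C^{0,\gamma}$ for $r>n$) the $\ti\psi_\al$ are uniformly bounded in $C^{0,\gamma}(K_j)$ for each $j$ and some $\gamma\in(0,1)$; Arzel\`a--Ascoli and a diagonal argument give a subsequence (still denoted $\ti\psi_\al$) converging in $C^0(K)$, for every compact $K\subset V$, to some $\psi\in C^0(V)$ with $\|\psi\|_{L^\infty(V)}\le C$ (and, after a further subsequence, $\nabla^g\ti\psi_\al\weakto\nabla^g\psi$ in $L^r_{\rm loc}$, so $\psi\in W^{1,r}_{\rm loc}$). Testing the displayed equation against $\varphi\in C^\infty_c(\Si^g V)$, integrating the $D^g$-term by parts, and letting $\al\to0$: the $D^g$-term converges by $C^0$-convergence of $\ti\psi_\al$; the $A^g_{g_\al}$- and $B^g_{g_\al}$-terms vanish by the uniform $L^1$-bound on $\nabla^g\ti\psi_\al$ and the uniform $L^\infty$-bound on $\ti\psi_\al$ together with $\|A^g_{g_\al}\|_\infty,\|B^g_{g_\al}\|_\infty\to0$; and the right-hand side converges by uniform $C^0$-convergence and $\la_\al\to\bar\la$. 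Hence $D^g\psi=\bar\la|\psi|^{2/(n-1)}\psi$ weakly on $V$, and since $\psi\in C^0\cap L^\infty$ the standard elliptic regularity for this equation (cf. Section~\ref{regularity} and the regularity statement used in Theorem~\ref{attained}) upgrades $\psi$ to a $C^1$ (indeed $C^{1,\alpha}$, and smooth away from its zero set) spinor, establishing \eref{eq_limit}.

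Finally, \eref{norminf_lim} follows at once from uniform $C^0$-convergence on $K$ and the identity $|\ti\psi_\al|_g=|\psi_\al|_{g_\al}$, and for \eref{normlr_lim} one writes $dv^{g_\al}=\rho_\al\,dv^g$ with $\rho_\al\to1$ uniformly on $K$ (from $g_\al\to g$ in $C^0$) and combines this with $|\ti\psi_\al|_g^r\to|\psi|_g^r$ uniformly on $K$.
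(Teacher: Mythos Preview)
Your proof is correct and follows essentially the same route as the paper: transport to the fixed bundle $\Sigma^g V$ via the Bourguignon--Gauduchon map, use \eref{relD} and the local elliptic estimate \eref{ellip_est} with the absorption of the $A^g_{g_\al}$-term to obtain uniform $W^{1,r}$ bounds on cut-off spinors, extract a $C^{0,\gamma}$-convergent subsequence via Sobolev embedding and a diagonal argument, pass to the weak limit in the equation, and upgrade regularity. The only cosmetic difference is that the paper works one compact set at a time and explicitly writes out the estimate $|D^g(\chi\phi_\al)|^r\le C+o(1)|\nabla^g(\chi\phi_\al)|^r$ before absorbing, whereas you summarize this step; the content is identical.
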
 

\begin{proof}
Let $K$ be a compact subset of $V$ and let $\Om$ be an open set in 
$V$ with compact closure such that $K \subset \Om$. Let 
$\chi \in C^{\infty}(V)$ with $0 \leq \chi \leq 1$ be compactly 
supported in $\Om$ and satisfy $\chi = 1$ on a neighbourhood 
$\widetilde{\Om}$ of $K$. Set 
$\phi_\al =  ( \beta_{g_\al}^g)^{-1} \psi_\al$. Using Equations 
(\ref{eqal}) and (\ref{relD}) we get
\begin{equation} \label{dphial}
D^g (\chi \phi_\al) 
= 
\grad^g \chi \cdot \phi_\al 
+ \chi\la_{\al} |\phi_\al|^{\frac{2}{n-1}} \phi_\al 
- \chi A^g_{g_\al} (\nabla^g \phi_{\al} ) 
- \chi B^g_{g_\al} (\phi_{\al} ).
\end{equation}
Using the fact that $|a+b+c|^r\leq 3^r(|a|^r+|b|^r+|c|^r)$ for  
$a,b,c \in \mR$, $r \geq 1$, we see that 
\begin{equation*}
\begin{split}
| D^g( \chi \phi_\al)|^r 
&\leq 
3^r \Big(
| \grad^g \chi \cdot \phi_\al + 
\chi \la_{\al} |\phi_\al|^{\frac{2}{n-1}} \phi_\al |^r   \\
&\qquad
+ 
|\chi A^g_{g_\al} (\nabla^g \phi_{\al} )|^r 
+ 
|\chi B^g_{g_\al} (\phi_{\al} )|^r \Big) 
\end{split}
\end{equation*}
for  $r \geq 1$. Since $\| \phi_{\al} \|_{L^\infty(V)} 
= \| \psi_{\al} \|_{L^\infty(V)} \leq C$ we have 
$$
| \grad^g \chi \cdot \phi_\al + 
\chi \la_{\al} |\phi_\al|^{\frac{2}{n-1}} \phi_\al |^r
\leq C.
$$
By Relations (\ref{boundA^g_g'}) and (\ref{boundB^g_g'}), and 
since $\lim_{\al \to 0} \| g_\al - g \|_{C^1(\Om)} =0$, we get 
\begin{equation*}
\begin{split}
|\chi A^g_{g_\al} (\nabla^g \phi_{\al} )|^r 
+ 
| \chi B^g_{g_\al} (\phi_{\al} )|^r 
&\leq 
o(1) \left( 
|\nabla^g (\chi \phi_\al) |^r 
+ |\grad^g \chi \cdot \phi_\al |^r
+ |\chi \phi_\al |^r
\right) \\
&\leq
o(1) \left( |\nabla^g (\chi \phi_\al) |^r  + C \right) ,
\end{split}
\end{equation*}
where $o(1)$ tends to $0$ with $\al$. It follows that 
$$
|D^g (\chi \phi_\al)|^r \leq C + o(1) |\nabla^g (\chi \phi_\al) |^r.
$$
Setting $\phi = \chi \phi_\al$ in Inequality (\ref{ellip_est}) 
and again using that $\| \phi_{\al} \|_{L^\infty(\Omega)}$ is
uniformly bounded we get that
$$
\int_{\Omega} |\nabla^g (\chi \phi_\al) |^r \,dv^g 
\leq 
C + o(1) \int_{\Omega} |\nabla^g (\chi \phi_\al) |^r \,dv^g.
$$
In particular $(\chi \phi_\al)$ is bounded in $H_0^{1,r}(\Om)$. 
Let $a \in (0,1)$. By the Sobolev Embedding Theorem this implies that
a subsequence of $(\chi \phi_\al)$ converges in $C^{0,a}(\Omega)$ to
$\psi_K \in \Gamma(\Sigma^{g_\al} \Omega)$ of class $C^{0,a}$. We take
the inner product of (\ref{dphial}) with a smooth spinor 
$\widetilde{\phi}$ which is compactly supported in $\widetilde{\Om}$
and integrate over $\Omega$. Since $\chi = 1$ on the support of
$\widetilde{\phi}$ the result is
\begin{equation*}
\begin{split}
\int_\Omega \< \phi_{\al} ,D^g \widetilde{\phi} \> \,dv^g 
&=  
\int_\Omega \la_{\al} 
|\phi_\al|^{\frac{2}{n-1}} \< \phi_\al,\widetilde{\phi} \> \,dv^g  \\
&\quad
- \int_\Omega 
\<A^g_{g_\al}(\nabla^g \phi_{\al}  ), \widetilde{\phi} \> \,dv^g 
- \int_\Omega 
\<B^g_{g_\al}(\phi_{\al} ),\widetilde{\phi} \> \,dv^g .
\end{split}
\end{equation*}
Taking the limit $\al \to 0$ and again using (\ref{boundA^g_g'}) and 
(\ref{boundB^g_g'}) we get  
$$
\int_\Omega \< \psi_K ,D^g \widetilde{\phi} \>\,dv^g 
= 
\int_\Omega \bar{\la} |\psi_K|^{\frac{2}{n-1}} 
\< \psi_K,\widetilde{\phi} \> \,dv^g.
$$
Hence, $\psi_K$ satisfies Equation (\ref{eq_limit}) weakly on $K$.
By standard regularity theorems we conclude that $\psi_K \in C^1(K)$. 

Now we choose an increasing sequence of compact sets $K_m$ such that
$\cup_m K_m = V$. Using the above arguments and taking successive
subsequences it follows that $(\phi_\al)$ converge to spinor fields
$\psi_m$ on $K_m$ with $\psi_m |_{K_{m-1}} = \psi_{m-1}$.  We define
$\psi$ on $V$ by $\psi \definedas \psi_m$ on $K_m$. By taking a
diagonal subsequence of we get that $(\phi_\al)$ tends to $\psi$ in
$C^0$ on any compact set $K \subset V$.

The relations (\ref{norminf_lim}) and (\ref{normlr_lim}) follow
immediately since $\beta^g_{g_\al}$ is an isometry, since 
$\phi_\al = (\beta^g_{g_\al})^{-1} \psi_\al$, and since $(g_\al)$
(resp. $(\phi_\al)$) tends to $g$ (resp. $\psi$) in $C^0$ on $K$. 
This ends the proof of Lemma \ref{lim_sol}.
\end{proof}

\subsection{Dirac spectral bounds on products with spheres}
\label{sec.lemmata}

In the following lemma we assume (in the case $m=1$) that $S^1$
carries the spin structure which is obtained by restricting the unique
spin structure on the $B^2$ to the boundary. The proof is
a simple application of the formula for the squared Dirac operator on
a product manifold together with the lower bound of its spectrum on
the standard sphere.

\begin{lemma} \label{sphere_bundle}
Let $(V,g)$ be a complete Riemannian spin manifold. Then any
$L^2$-spinor $\psi$ on $(V \times S^m, g + \si^m)$ satisfies 
$$
\int_{V \times S^m} |D\psi|^2 \,dv^{g + \si^m} 
\geq 
\frac{m^2}{4} \int_{V \times S^m} |\psi|^2 \,dv^{g + \si^m} .
$$
\end{lemma}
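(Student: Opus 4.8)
The plan is to use the Schrödinger–Lichnerowicz/Bochner-type formula for the square of the Dirac operator on a Riemannian product and reduce the estimate to the well-known lower bound for the square of the Dirac operator on the round sphere $S^m$. Recall that on $(V\times S^m,\,g+\si^m)$ the spinor bundle is (up to a grading factor) the exterior tensor product of the spinor bundles on the factors, and the Dirac operator splits as $D = D^V\otimes \id \pm \id\otimes D^{S^m}$ in the appropriate sense; in particular $D^V$ and $D^{S^m}$ anticommute (or commute, depending on parity conventions) so that
$$
D^2 = (D^V)^2 \otimes \id + \id \otimes (D^{S^m})^2 .
$$
The first step is to make this decomposition precise and to justify that for $\psi$ smooth and compactly supported one has $\int |D\psi|^2 = \int |D^V\psi|^2 + \int |D^{S^m}\psi|^2$, the cross term vanishing after integration by parts since the two operators are formally self-adjoint and their anticommutator has vanishing integral against $\langle\cdot,\psi\rangle$.

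The second step is to invoke Friedrich's inequality on the round sphere: the smallest eigenvalue of $(D^{S^m})^2$ on $(S^m,\si^m)$ equals $m^2/4$, so fiberwise (integrating over the $S^m$-factor for fixed point of $V$) we get $\int_{S^m}|D^{S^m}\psi|^2\,dv^{\si^m} \geq \tfrac{m^2}{4}\int_{S^m}|\psi|^2\,dv^{\si^m}$. Integrating this over $V$ and discarding the nonnegative term $\int |D^V\psi|^2$ yields the claimed inequality for compactly supported smooth $\psi$.

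The third step is the $L^2$-density argument needed because $V$ is only complete, not compact, and because the hypothesis is merely $\psi\in L^2$. Here one uses that on a complete manifold the Dirac operator is essentially self-adjoint, so $\psi\in L^2$ with $D\psi\in L^2$ can be approximated in the graph norm by compactly supported smooth spinors (cut off by functions $\chi_R$ with $|\grad\chi_R|\to 0$, of the standard Gaffney type); passing to the limit in the inequality proved in step two gives the result, while if $D\psi\notin L^2$ the left-hand side is $+\infty$ and there is nothing to prove.

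The main obstacle is the bookkeeping in step one: getting the product decomposition of the Dirac operator right across all dimension parities of $\dim V$ and $m$ (the spinor bundle of a product is a tensor product only after possibly doubling, and the signs in the anticommutation relation depend on whether $\dim V$ is even or odd), and confirming that the cross term integrates to zero in each case. Everything else — Friedrich's spherical bound and the Gaffney cutoff on complete manifolds — is standard; the subtlety flagged in the statement about the spin structure on $S^1$ when $m=1$ is precisely what makes the bound $m^2/4 = 1/4$ hold there, since the bounding spin structure on $S^1$ has Dirac spectrum $\mZ+\tfrac12$ rather than $\mZ$.
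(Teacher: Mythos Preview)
Your proposal is correct and follows essentially the same route the paper indicates: the squared Dirac operator on the product decomposes as $(D^V)^2 + (D^{S^m})^2$, and then one applies the sharp lower bound $m^2/4$ for the spectrum of $(D^{S^m})^2$ on the round sphere. The paper's one-sentence sketch omits the parity bookkeeping and the density/Gaffney cutoff argument you spell out, but those are precisely the details one would fill in, and your treatment of them (including the remark on the bounding spin structure for $m=1$) is accurate.
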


\subsection{Approximation by local product metrics} 
\label{sec.approx}

In this paragraph we will see how to change the metrics $g_i$ to
product form $g_i = h_i + dr_i^2 + r_i^2 \sigma^{n-k-1}$ in a
neighbourhood of $W_i'$ in $M_i$ without changing $\lamin (M_i,g_i)$
much.

\begin{lemma} \label{approx}
Let $(V,g)$ be a compact Riemannian manifold of dimension~$n$ and 
let $S$ be a closed submanifold of dimension~$k$, where 
$0 \leq k \leq n-2$. Assume that a trivialization of the normal bundle
of $S$ is given and assume that $D^g$ is invertible. Then there exists
a sequence $(\ep_i)_{i \in \mN}$ of positive real numbers converging to $0$
and a sequence  $(g_{\ep_i})$ of metrics on $V$ such that 
$$
\lim_{i \to \infty} \lamin(V,g_{\ep_i}) = \lamin (V,g)
$$
and
$$
g_{\ep_i} = h + dr^2 + r^2 \sigma^{n-k-1}
$$
on $U^g(S,\ep_i)$. Here $h$ is the restriction of the metric $g$ to
$S$ and $r(x) = d^g(S,x)$.  
\end{lemma}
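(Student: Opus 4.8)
The plan is to build $g_{\ep}$ by interpolating, in the region near $S$, between the given metric $g$ and the ``radially symmetrized'' model metric $h + dr^2 + r^2\si^{n-k-1}$. First I would work in Fermi coordinates around $S$: using the given trivialization of the normal bundle, the exponential map $\exp^g$ identifies a tubular neighbourhood $U^g(S,\Rmax)$ with $S\times B^{n-k}(\Rmax)$, and in these coordinates the metric $g$ takes the form $g = h_r + dr^2 + r^2\si^{n-k-1} + \text{(error)}$, where $h_r$ is an $r$-dependent metric on $S$ with $h_0 = h$, and the error terms vanish at $r=0$ together with enough derivatives that $g$ and the model metric $g_{\mathrm{model}} := h + dr^2 + r^2\si^{n-k-1}$ agree to first order along $S$. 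Concretely, $|g - g_{\mathrm{model}}|_g = O(r)$ and $|\nabla^g(g - g_{\mathrm{model}})|_g = O(1)$ as $r\to 0$.

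Next I would fix a smooth cutoff $\chi_\ep:V\to[0,1]$ with $\chi_\ep = 1$ on $U^g(S,\ep)$, $\chi_\ep = 0$ outside $U^g(S,2\ep)$, and $|d\chi_\ep|\le C/\ep$ (as in the proof of Theorem \ref{thm.removal}), and set
$$
g_\ep := \chi_\ep\, g_{\mathrm{model}} + (1-\chi_\ep)\, g
$$
inside the tubular neighbourhood, and $g_\ep := g$ outside. This is a smooth metric which equals $h + dr^2 + r^2\si^{n-k-1}$ on $U^g(S,\ep)$ as required. The key estimate is that $g_\ep \to g$ in a strong enough topology. On $U^g(S,2\ep)$ we have $|g_\ep - g|_g = |\chi_\ep(g_{\mathrm{model}} - g)|_g \le |g_{\mathrm{model}}-g|_g = O(\ep)$, and for the derivative $|\nabla^g(g_\ep - g)|_g \le |d\chi_\ep|\,|g_{\mathrm{model}}-g|_g + |\nabla^g(g_{\mathrm{model}}-g)|_g \le (C/\ep)\cdot O(\ep) + O(1) = O(1)$, so $g_\ep \to g$ in $C^0$ and $\|g_\ep - g\|_{C^1}$ stays bounded — but importantly the $C^0$-norm goes to zero and the region where $g_\ep\neq g$ shrinks. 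One then needs the convergence $\lamin(V,g_\ep)\to\lamin(V,g)$. For this I would use the variational characterization \eref{funct}: since $D^g$ is invertible, $\lamin(V,g) = \inf_\psi J^g(\psi)$, and the functionals $J^{g_\ep}$ converge to $J^g$ in the sense that applying the Bourguignon--Gauduchon identification $\beta^g_{g_\ep}$ and the bounds \eref{boundA^g_g'}, \eref{boundB^g_g'}, together with ${}^{g}D^{g_\ep} = D^g + A^g_{g_\ep}(\nabla^g\cdot) + B^g_{g_\ep}(\cdot)$ and the elliptic estimate \eref{ellip_est2} valid since $D^g$ is invertible, shows $|J^{g_\ep}(\beta^g_{g_\ep}\psi) - J^g(\psi)| \to 0$ uniformly on the relevant set of spinors; hence $\liminf_\ep \lamin(V,g_\ep) \ge \lamin(V,g)$ and $\limsup_\ep\lamin(V,g_\ep)\le\lamin(V,g)$. (A convenient way to package this is to fix a minimizing or near-minimizing $\psi$ for $J^g$ and to transport it to get the upper bound, and conversely to take near-minimizers $\psi_\ep$ for $J^{g_\ep}$, note they stay in a fixed $H^{1,2n/(n+1)}$-ball by \eref{ellip_est2}, and pass to the limit; this also forces $D^{g_\ep}$ to be invertible for $\ep$ small, so that $\lamin(V,g_\ep)$ is well-defined and positive.) Finally one extracts a sequence $\ep_i\to 0$ from the family.

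The main obstacle I anticipate is the control of the radial symmetrization error: one must verify carefully that replacing the full metric $g$ near $S$ by the product-with-round-sphere model $h + dr^2 + r^2\si^{n-k-1}$ really does produce a $C^0$-small perturbation with only bounded (not blowing-up) first derivatives — i.e.\ that the first-order Taylor behaviour of $g$ in the normal directions is captured exactly by $dr^2 + r^2\si^{n-k-1}$ and that the $S$-dependence $h_r - h$ is $O(r)$. This is where the specific structure of Fermi coordinates and the Gauss lemma enter, and it is the step that makes the cutoff argument work despite $|d\chi_\ep|\sim 1/\ep$. The passage $\lamin(V,g_\ep)\to\lamin(V,g)$ is then a fairly standard continuity argument via the functional $J$, using that $D^g$ invertible is an open condition and that the perturbation is localized and $C^0$-small.
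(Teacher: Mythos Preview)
Your construction of $g_\ep$ and the estimates $|g_\ep - g|_g = O(\ep)$, $|\nabla^g(g_\ep - g)|_g = O(1)$ are exactly those of the paper, and the tools you name (Bourguignon--Gauduchon, \eref{boundA^g_g'}--\eref{boundB^g_g'}, \eref{ellip_est2}, \eref{funct}) are the ones used. The execution differs in two places. For $\limsup \lamin(V,g_\ep)\le \lamin(V,g)$ the paper avoids transporting test spinors via $\beta^g_{g_\ep}$ altogether: it multiplies a near-minimizer $\psi$ of $J^g$ by a cutoff $\chi'_\ep$ supported where $g_\ep = g$, so that $J^{g_\ep}(\chi'_\ep\psi)=J^g(\chi'_\ep\psi)$, and then only needs $J^g(\chi'_\ep\psi)\to J^g(\psi)$. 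For the reverse inequality the paper does not argue via a uniform comparison of functionals; instead it assumes (without loss) $\la_\ep<\lamin(S^n,\si^n)$ and invokes Theorem~\ref{attained} to obtain \emph{actual} minimizers $\psi_\ep$ solving $D^{g_\ep}\psi_\ep=\la_\ep|\psi_\ep|^{2/(n-1)}\psi_\ep$ with $\|\psi_\ep\|_{L^{2n/(n-1)}}=1$. This Euler--Lagrange equation is what yields clean $H_1^{2n/(n+1)}$-bounds on $\phi_\ep=(\beta^g_{g_\ep})^{-1}\psi_\ep$ and, after a separate step showing $\bar\la>0$, the estimate $J^g(\phi_\ep)\le\bar\la+o(1)$. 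Your phrase ``uniformly on the relevant set of spinors'' hides this: since $\|g_\ep-g\|_{C^1}$ is only bounded (not $o(1)$), the $B^g_{g_\ep}$-contribution is controlled only through smallness of its support together with a priori bounds on $\|\psi_\ep\|_{L^{2n/(n-1)}}$ and $\|\nabla^g\phi_\ep\|_{L^{2n/(n+1)}}$, and for generic near-minimizers such bounds are not automatic.
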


\begin{proof}
Using the trivialization of the normal bundle we identify a
neighbourhood of $S$ with $S \times B^{n-k}(\Rmax)$ as described in
Section \ref{joining}. In this neighbourhood we define the metric 
$\overline{g} \definedas h + dr^2 + r^2 \sigma^{n-k-1}$.
Recall that $U^g(S,\ep)$ denotes the set of points $x \in V$ such
that $r(x) < \ep$ and let $\chi_\ep \in C^{\infty}(M)$, 
$0 \leq \chi \leq 1$, be a cut-off function such that $\chi = 1$ on 
$U^g(S,\ep)$, $\chi = 0$ on $M \setminus U^g(S,2\ep)$, and 
$|d \chi_{\ep}|\leq 2/\ep$. We define
$$
g_\ep
\definedas
\chi_\ep \overline{g}+ (1-\chi_\ep) g.
$$
Then $g_\ep$ has product form on $U^g(S,\ep)$. For convenience we
introduce the notation $\la_\ep \definedas \lamin(V,g_\ep)$ and 
$\la \definedas \lamin(V,g)$. Let $(\ep_i)_{\i \in \mN}$ be a sequence of
positive numbers tending to $0$ such
that  the limit $\lim_{i \to \infty} \la_{\ep_i}$ exists. In the following, we
write $\ep \to 0$ instead of
$\ep_i \to 0$ as $i \to \infty$. In the same way, $(\ep)$ will denote
the successive subsequences of $(\ep_i)$ we will need. With this notations,
let $\bar{\la} \definedas \lim_{\ep \to 0} \la_\ep$ which exists after 
possibly taking a subsequence.  

We begin by proving that 
\begin{equation} \label{la_al<la}
\bar{\la} \leq \la,
\end{equation}
which is the simpler part of the proof. Let $J \definedas J^g$ and 
$J_\ep \definedas J^{g_{\ep}}$ be the functionals associated to $g$
and $g_{\ep}$, and let $\de>0$ be a small number. We set 
$\chi_\ep' \definedas 1-\chi_{2\ep}$, so that 
$\chi_\ep' = 1$ on $V \setminus U^g(S,4\ep)$, $\chi_\ep' = 0$ on 
$U^g(S,2\ep)$, and $|d\chi_\ep'| \leq 1/\ep$. We see that $g=g_\ep$ 
on the support of $\eta_\ep'$. Let $\psi$ be a smooth spinor such 
that $J(\psi) \leq \la + \de$. We then have 
$$
\int_V \< D^g (\chi_\ep' \psi), \chi_\ep' \psi \> \,dv^g 
= 
\int_V \chi_\ep'^2 \< D^g \psi, \psi\> \,dv^g 
+ 
\int_V \< \grad^g \chi_\ep' \cdot \psi, \chi_\ep' \psi \> \,dv^g.
$$
Since the last term here is purely imaginary we obtain
\begin{equation} \label{limden}
\lim_{\ep \to 0} 
\int_V  \< D^g (\chi_\ep' \psi), \chi_\ep' \psi \> \,dv^g  
= 
\lim_{\ep \to 0}
\operatorname{Re}
\int_V \chi_\ep'^2 \< D^g \psi, \psi\> \,dv^g
=
\int_V \<D^g \psi, \psi \> \,dv^g.
\end{equation}
We compute
\begin{equation} \label{limnum} 
\begin{split}
\int_V |D^g( \chi_\ep' \psi)|^{\frac{2n}{n+1}} \,dv^g
&=
\int_{V \setminus U^g(S,4\ep)} |D^g\psi|^{\frac{2n}{n+1}} \,dv^g \\
&\quad
+ 
\int_{U^g(S,4\ep) \setminus U^g(S,2\ep)} 
|\grad^g \chi_\ep' \cdot \psi 
+ \chi_\ep' D^g \psi |^{\frac{2n}{n+1}} \,dv^g.
\end{split}
\end{equation}
Using the fact that $|a+b|^{\frac{2n}{n+1}} \leq
2^{\frac{2n}{n+1}}(|a|^{\frac{2n}{n+1}} +|b|^{\frac{2n}{n+1}})$
for $a,b \in \mR$ we have 
\begin{equation*}
\begin{split}
|\grad^g \chi_\ep' \cdot \psi + \chi_\ep' D^g\psi |^{\frac{2n}{n+1}} 
&\leq
2^\frac{2n}{n+1} \left(  
|\grad^g \chi_\ep'|^{\frac{2n}{n+1}} |\psi|^{\frac{2n}{n+1}} 
+  
| \chi_\ep'|^{\frac{2n}{n+1}} |D^g\psi|^{\frac{2n}{n+1}} 
\right) \\
&\leq 
2^\frac{2n}{n+1} \left( C_1 \ep^{-\frac{2n}{n+1}} + C_2 \right),
\end{split} 
\end{equation*}
where $C_1$ and $C_2$ are bounds on $|\psi|$ and $|D\psi|$. Since 
$\Vol( U^g(S,4\ep) \setminus U^g(S,2\ep)) 
\leq C \ep^{n-k} \leq C \ep^2$
it follows that
$$
\lim_{\ep \to 0}
\int_{U^g(S,4\ep) \setminus U^g(S,2\ep)} 
| \grad^g \chi_\ep' \cdot \psi 
+ \chi_\ep' D^g\psi |^{\frac{2n}{n+1}} \,dv^g
= 0.
$$
It is clear that 
$\lim_{\ep \to 0} \int_{V \setminus U^g(S,4\ep)} 
|D^g\psi|^{\frac{2n}{n+1}} \,dv^g
= \int_V  |D^g\psi|^{\frac{2n}{n+1}} \,dv^g$
so Equation (\ref{limnum}) tells us that 
$$
\lim_{\ep \to 0} 
\int_V |D^g( \chi_\ep' \psi)|^{\frac{2n}{n+1}} \,dv^g 
= 
\int_V  |D^g\psi|^{\frac{2n}{n+1}} \,dv^g.
$$
Together with Equation (\ref{limden}) this proves that 
$\lim_{\ep \to 0} J(\chi_\ep' \psi) = J(\psi) \leq \la +\de$. 
Since $g_\ep = g$ on the support of $\chi_\ep' \psi$, we have 
$J_{\ep} (\chi_\ep' \psi)= J(\chi_\ep' \psi)$.
Relation (\ref{la_al<la}) now follows since
$\la_\ep \leq J_{\ep} (\chi_\ep' \psi)$ and $\de$ is arbitrary.

The second and harder part of the proof is to show that
\begin{equation} \label{la_al>la}
\bar{\la} \geq \la.
\end{equation}

From Proposition \ref{aubin} we know that
$\la_\ep \leq  \lamin(S^n,\si^n)$,
$\bar{\la} \leq  \lamin(S^n,\si^n)$, and $\la \leq  \lamin(S^n,\si^n)$.  
Inequality (\ref{la_al>la}) is obvious if 
$\bar{\la} =  \lamin(S^n,\si^n)$. Hence we will assume  
$\la_\ep < \lamin(S^n,\si^n)$ for a sequence $\ep\to 0$.
As the Dirac operator is invertible we know that \eref{ellip_est2}
holds. By Theorem \ref{attained} there exists for all $\ep$
spinor fields $\psi_\ep \in \Gamma(\Si^{g_\ep} v)$ of class $C^1$ 
such that  
\begin{equation} \label{eq-alpha}
D^{g_{\ep}} \psi_\ep 
= 
\la_\ep |\psi_\ep|^{\frac{2}{n-1}} \psi_\ep,
\end{equation}
and
\begin{equation} \label{normalize}
\int_V |\psi_\ep|^{\frac{2n}{n-1}} \,dv^{g_\ep} = 1.
\end{equation}
Define $\phi_\ep = (\beta_{g_\ep}^g)^{-1} \psi_\ep$. 
Since $g_\ep \to g$ it is easily seen that the sequence 
$(\phi_\ep)$ is bounded in $L^\frac{2n}{n-1}(V,g)$. By (\ref{relD}) 
and (\ref{eq-alpha}) we have
\begin{equation} \label{dphial1}
D^g \phi_\ep 
= 
\la_\ep |\phi_\ep|^{\frac{2}{n-1}} \phi_\ep
- A^g_{g_\ep} (\nabla^g  \phi_{\ep} ) 
- B^g_{g_\ep} (\phi_{\ep} ),
\end{equation} 
together with $|a+b+c|^{\frac{2n}{n+1}} \leq 3^{\frac{2n}{n+1}}
(|a|^{\frac{2n}{n+1}}+|b|^{\frac{2n}{n+1}}+|c|^{\frac{2n}{n+1}})$ 
for $a,b,c \in \mR$ this implies
\begin{equation} \label{dphial1der}
| D^g \phi_\ep|^{\frac{2n}{n+1}}
\leq 
C \left( \la_{\ep}^{\frac{2n}{n+1}} |\phi_\ep|^{\frac{2n}{n-1}}
+ 
|A^g_{g_\ep} (\nabla^g \phi_{\ep} )|^{\frac{2n}{n+1}}
+ 
|B^g_{g_\ep} ( \phi_{\ep} )|^{\frac{2n}{n+1}} \right). 
\end{equation}
We also have 
\begin{equation} \label{Ag} 
| A^g_{g_\ep} (\nabla^g \phi_{\ep} )| 
\leq 
\| g - g_\ep \|_{C^0(V)} |\nabla^g \phi_{\ep}| 
\leq 
C \ep |\nabla^g \phi_{\ep}| ,
\end{equation}
and 
\begin{equation} \label{Bg}
| B^g_{g_\ep} ( \phi_{\ep} )| 
\leq 
\| g - g_\ep \|_{C^1(V)} |\phi_{\ep}| 
\leq 
C |\phi_{\ep}|. 
\end{equation}
Indeed, since $g$ and $g_{\ep}$ coincide on $S$, there exists a
constant $C$ so that $\| g - g_{\ep} \|_{ B^g(V,\ep) } \leq C \ep$. 
Together with the fact that $|d \chi_\ep| \leq 2/\ep$ and using 
the definition of $g_\ep$, this immediately implies that 
$\| g- g_\ep \|_{C^1(V)} \leq C$. Using Relation (\ref{ellip_est2})
and integrating (\ref{dphial1der}) we find that 
\begin{equation*} 
\begin{split}
\int_V |\nabla^g \phi_\ep|^{\frac{2n}{n+1}} \,dv^g 
&\leq C \Big( 
\la_\ep^{\frac{2n}{n+1}} \int_V |\phi_\ep|^{\frac{2n}{n-1}} \,dv^g 
+ 
\ep^{\frac{2n}{n+1}} \int_V 
|\nabla^g \phi_\ep|^{\frac{2n}{n+1}}\,dv^g \\
&\qquad
+ 
\int_V |B^g_{g_\ep}(\phi_\ep)|^{\frac{2n}{n+1}}\,dv^g
\Big).
\end{split}
\end{equation*}
As $g$ and $g_\ep$ coincide on $V \setminus B^g(S,2\ep)$ we conclude
that $B^g_{g_\ep} ( \phi_{\ep} ) = 0$ on this set. Together with 
(\ref{Bg}) we have 
\begin{equation*} 
\begin{split}
\int_V  | B^g_{g_\ep}  (\phi_{\ep} )|^{\frac{2n}{n+1}} \,dv^g 
&\leq   
C \int_{ B^g(S,2\ep)}  |\phi_{\ep} |^{\frac{2n}{n+1}} \,dv^g \\
&\leq  
C \, \Vol(B^g(S,2\ep))^{\frac{2}{n+1}} 
{\left( \int_{B^g(S,2\ep)} |\phi_{\ep}|^{\frac{2n}{n-1}}\,dv^g 
\right)}^{\frac{n-1}{n+1}} \\
&= o(1) ,
\end{split}
\end{equation*}
where $o(1)$ tends to $0$ with $\ep$. Hence
\begin{equation} \label{nabla_ineq} 
\int_V |\nabla^g \phi_\ep|^{\frac{2n}{n+1}} \,dv^g 
\leq C \left( 
\la_\ep^{\frac{2n}{n+1}} \int_V |\phi_\ep|^{\frac{2n}{n-1}} \,dv^g 
+ 
\ep \int_V |\nabla^g \phi_\ep|^{\frac{2n}{n+1}}\,dv^g 
+ 
o(1) 
\right) .
\end{equation}
This implies in particular that $(\phi_\ep)$ is bounded in 
$H_1^{\frac{2n}{n+1}}(V)$ and hence after passing to a subsequence
$(\phi_\ep)$ converges weakly to a limit $\phi$ in
$H_1^{\frac{2n}{n+1}}(V)$. 

The next step is to prove that $\bar{\la} = \lim_{\ep \to 0} \la_{\ep}$ 
is not zero. To get a contradiction let us assume that $\bar{\la} = 0$.
We then obtain from (\ref{nabla_ineq}) that 
$$
\int_V |\nabla^g \phi|^{\frac{2n}{n+1}}\,dv^g 
\leq
\lim_{\ep \to 0}  \int_V |\nabla^g \phi_\ep|^{\frac{2n}{n+1}}\,dv^g 
=0.
$$
So $\phi$ is parallel and since $D^g$ is invertible we conclude 
$\phi = 0$, in other words $(\phi_\ep)$ converges weakly to zero in
$H_1^{\frac{2n}{n+1}} (V)$. As this space embeds compactly into 
$L^{\frac{2n}{n+1}} (V)$ we have
$$
\lim_{\ep \to 0} \| \phi_\ep \|_{L^{\frac{2n}{n+1}}(V)} 
= 
\| \phi \|_{L^{\frac{2n}{n+1}}(V)} 
=
0 ,
$$
and hence $(\phi_\ep)$ converges strongly to zero in
$H_1^{\frac{2n}{n+1}}(V)$. As this space embeds continuously into 
$L^{\frac{2n}{n-1}}(V)$ we conclude that the sequence
converges strongly to zero in $L^{\frac{2n}{n-1}}(V)$. This is 
impossible since by Relation (\ref{normalize}) we easily get that 
$$
\lim_{\ep \to 0} \| \phi_\ep \|_{L^{\frac{2n}{n-1}}(V)} = 1.
$$
From this contradiction we conclude
\begin{equation} \label{la>0}
\bar{\la} > 0.
\end{equation}
From (\ref{dphial1}) we have  
\begin{equation*}
\begin{split}
{\| D^g \phi_\ep \|}_{L^\frac{2n}{n+1}(V)} 
&\leq 
\la_\ep {\| \phi_\ep \|}^\frac{n-1}{n+1}_{L^\frac{2n}{n-1}(V)} 
+
{\| A^g_{g_\ep}(\nabla^g \phi_{\ep}) \|}_{L^\frac{2n}{n+1}(V)} 
\\
&\quad
+ 
{\| B^g_{g_\ep}(\phi_{\ep} )\|}_{L^\frac{2n}{n+1}(V)}.
\end{split}
\end{equation*}
We already proved above that 
$$
\lim_{\ep \to 0} {\| B^g_{g_\ep} ( \phi_{\ep} )
\|}_{L^\frac{2n}{n+1}(V)} = 0.
$$
Using Relation (\ref{Ag}) we get similarily
$$
\lim_{\ep \to 0} {\| A^g_{g_\ep} (\nabla^g \phi_{\ep} ) 
\|}_{L^\frac{2n}{n+1}(V)} = 0.
$$
Moreover since $dv^{g_{\ep}} = (1 + o(1))\,dv^g$ it follows from 
(\ref{normalize}) that 
$$
\la_\ep \| \phi_\ep \|^{\frac{n+1}{n-1}}_{L^\frac{2n}{n-1}(V)} 
= 
\la_\ep (1+o(1)).
$$
We conclude
\begin{equation} \label{num2} 
{\| D^g \phi_\ep \|}_{L^\frac{2n}{n+1}(V)}
\leq \la_\ep  + o(1).
\end{equation}
Starting from Equation (\ref{dphial1}) we can prove in a 
similar way that
\begin{equation} \label{den2}
\int_V \< D^g \phi_\ep , \phi_\ep  \> \,dv^g 
\geq 
\la_\ep + o(1).
\end{equation}
From (\ref{la>0}), (\ref{num2}), and (\ref{den2}) it follows that 
$\la \leq \lim_{\ep \to 0} J(\phi_\ep)= \bar{\la}$. This ends
the demonstration of (\ref{la_al>la}), which together with 
(\ref{la_al<la}) proves Lemma \ref{approx}.
\end{proof}

\section{Proofs}

\subsection{Proof of Theorem \ref{main}}

This section is devoted to the proof of Theorem \ref{main}. 
Our goal is to construct a family of 
metrics $(g_\th)$ with $0 <\th < \th_0$ which satisfies the conclusion of Theorem 
(\ref{main}).

From Lemma \ref{approx} applied with $V = M = M_1\amalg M_2$ and 
$S = W'= w_1(W \times \{0\}) \amalg w_2(W \times \{0\})$ 
we may assume that 
\begin{equation} \label{metric=product} 
g = h + dr^2 + r^2 \sigma^{n-k-1}
\end{equation} 
in a neighbourhood $U(\Rmax)$ of $W'$ where $\Rmax>0$. 
We fix numbers $R_0,R_1\in \mR$ with $\Rmax> R_1> R_0 >0$  
and we choose a function $F:M\setminus W' \to \mR^+$ such that 
$$
F(x) = 
\begin{cases}
1,       &\text{if $x \in M_i \setminus U_i(R_1)$;} \\ 
r_i^{-1} &\text{if $x \in U_i(R_0)\setminus W'$.}
\end{cases}
$$
We further choose $\theta\in (0,R_0)$, later we will let 
$\theta\to 0$.
It is not difficult to see that there is a smooth function
$f:U(\Rmax) \to \mR$ (depending only on $r$), 
real numbers $\de_1 = \de_1(\th)$ and $ \de_2 = \de_2(\th)$ with 
$\th> \de_2 > \de_1>0$ and a real number 
$A_\th\in (\th^{-1},\de_2^{-1})$ such that
\begin{figure}
\newdimen\axdim
\axdim=1pt
\newdimen\cudim
\cudim=2pt
\def\ticklen{.2}
\def\abst{.3}  

\begin{center}
\psset{unit=1cm}

\begin{pspicture}(-1,-\ticklen)(10.5,7)
\psset{linewidth=\axdim}
\psaxes[linewidth=\axdim,labels=none,ticks=none]{->}(0,0)(9.5,6)
\rput[t](9.5,-\abst){$-\ln r=|\ln \epsilon|-|t|$} 
\rput[r](-\abst,6){$f(r)$}  
\psline(3.0,-\ticklen)(3.0,\ticklen)
\psline(5.0,-\ticklen)(5.0,\ticklen)
\rput[t](3.0,-\abst){$-\ln\theta$}
\rput[t](5.0,-\abst){$-\ln \delta_2$}
\psline(-\ticklen,4)(\ticklen,4)
\rput[r](-\abst,4){$\ln A_\theta$}
\psline[linestyle=dotted]{-}(3.0,0)(3,3)
\psline[linestyle=dotted]{-}(5,0)(5,4)
\psline[linestyle=dotted]{-}(0,4)(5,4)
\psset{linewidth=\cudim}

\psline(0,0)(3,3)
\psecurve[showpoints=false](2.8,2.7)(3,3)(5,4)(5.2,3.9)
\psline(5,4)(7,4)
\end{pspicture}
\end{center}
\smallskip
\caption{The function $-\ln r \mapsto f(r)$}
\end{figure}
$$
f(x)  =  
\begin{cases}
-\ln r     &\text{if $x \in U(\Rmax) \setminus U(\th)$;} \\
\ln A_\th  &\text{if  $x \in U(\de_2)$,} 
\end{cases}
$$
and such that
$$
\left|r\frac{df}{dr}\right|
= 
\left|\frac{df}{d(\ln r)}\right|
\leq 1,
$$ 
and 
$$
\left\|r\frac{d}{dr}\left(r\frac{df}{dr}\right)\right\|_{L^\infty}
= 
\left\|\frac{d^2f}{d^2(\ln r)}\right\|_{L^\infty}
\to 0
$$
as $\th\to 0$. It follows that $\lim_{\th\to 0} A_\th=\infty$.
 
After these choices we set $\ep \definedas e^{-A_\th} \de_1$. We
assume that $N$ is obtained from $M$ by a connected sum along $W$ with
parameter $\ep$, as explained in Section \ref{joining}. In particular,
recall that $U^N_\ep(s) = U(s)\setminus U(\ep)/{\sim}$ for all 
$s \geq \ep$. On the set 
$U^N_\ep(\Rmax) = U(\Rmax) \setminus U(\ep)/{\sim}$
we define the variable $t$ by
$$
t \definedas -\ln r_1 + \ln \ep \leq 0
$$
on $U_1(\Rmax)\setminus U(\ep)$ and 
$$
t \definedas \ln r_2 - \ln \ep\geq 0
$$  
on $U_2(\Rmax)\setminus U(\ep)$. This implies
$$
r_i=e^{|t|+ \ln \ep}= \ep e^{|t|}.
$$
The choices imply that $t:U^N_\ep(\Rmax) \to \mR$ is a smooth
function with $t\leq 0$ on $U^N_\ep(\Rmax)\cap M_1$, $t\geq 0$ 
on $U^N_\ep(\Rmax)\cap M_2$, and $t=0$ is the common boundary 
$\pa U_1(\ep)$ identified in $N$ with $\pa U_2(\ep)$.
Then Equation \eref{metric=product} tells us that
$$
r^{-2} g = \ep^{-2} e^{-2|t|}h_i + dt^2 + \sigma^{n-k-1}.
$$

Expressed in the new variable $t$ we have
$$
F(x) = \ep^{-1}e^{-|t|}
$$ 
if $x \in U_\ep^N(R_0) \setminus U_\ep^N(\th)$ or in other words if 
$|t|+\ln \ep\leq \ln R_0$,
and 
$$
f(t)  =  
\begin{cases}
-|t|-\ln\ep     &\text{if $|t|+\ln \ep \in (\th,\Rmax)$,} \\
\ln A_\th  &\text{if  $|t|+\ln \ep \leq \ln \de_2$,}
\end{cases}
$$
and $|df/dt|\leq 1$, $\|d^2f/dt^2\|_{L^\infty}\to 0$. 
After choosing a cut-off function $\chi:\mR\to [0,1]$ such that
$\chi=0$ on $(-\infty,-1]$ and $\chi=1$ on $[1,\infty)$, we define
$$
g_{\th}(x) 
\definedas  
\begin{cases}
F^2 g_i 
&\text{if $x \in M_i \setminus U_i(\th)$;} \\
e^{2f(t)}h_i + dt^2 + \sigma^{n-k-1} 
&\text{if $x \in  U_i(\th)\setminus U_i(\de_1)$;} \\
A_{\th}^2 \chi( A_{\th}^{-1} t ) h_2 
+ A_{\th}^2 (1-\chi(A_{\th}^{-1} t) ) h_1 + dt^2 + \sigma^{n-k-1} 
&\text{if $x \in U_i(\de_1)\setminus U_i(\ep)$.} 
\end{cases}
$$
(Recall that the $h_i$ are defined as the pullback via $w_i$ of 
the metric $g_i$ on $M_i$, composed with restriction to 
$W = W\times \{0\}$.)

\begin{figure} 
\begin{center}
$\framebox{\vbox{
{\sc\large Hierarchy of Variables} 
$$
\Rmax> R_1> R_0 > \th> \de_2>\de_1> \ep>0 
$$
We choose in the order  
$\Rmax,R_1,R_0,\th,\de_2,\de_1, A_{\th}$ 
We can assume for example that 
$\ep=e^{-A_{\th}}\de_1$.
This implies $|t|=A_{\th}\Leftrightarrow r_i=\de_1$.
}}$

\caption{Hierarchy of variables}
\end{center}
\end{figure}

On $U_\ep^N(R_0)$ we write $g_{\th}$ as 
$$
g_\th= \al_t^2  \tilde{h}_t + dt^2 + \sigma^{n-k-1},
$$
where the metric $\tilde{h}_t$ is defined for $t \in \mR$ by 
\begin{equation} \label{def.tildeh}
\tilde{h}_t 
\definedas 
\chi(A_{\th}^{-1} t ) h_2 + (1-\chi(A_{\th}^{-1} t )) h_1,
\end{equation}
and where 
\begin{equation} \label{def.al_t}
\al_t \definedas e^{f(t)}.
\end{equation} 

The rest of the proof consists of showing that $(g_\th)$ is
the desired family of metrics. 
We first choose a sequence $(\th_i)_{i\in \mN}$ converging to $0$ so that
$\lim_{i \to
  \infty}  \la_{\th_i}$ exists.  
 To avoid
complicated notation we write $\theta\to 0$ for the sequence
$(\th_i)_{i\in \mN}$ converging to zero and we will pass successively
to subsequences without changing notation. Similarly $\lim_{\th\to 0}
h(\th)$ should be read as $\lim_{i\to \infty} h(\th_i)$.
We set $\la \definedas \lamin(M_1\amalg M_2,g)$, 
$\la_\th\definedas \lamin(N,g_\th)$, and 
$\bar{\la} \definedas \lim_{\th\to 0} \la_{\th}$. 
Let $J \definedas J^g$ and $J_\th\definedas J^{g_{\th}}$
be the functionals associated respectively 
to $g$ and $g_{\th}$.  

The easier part of the argument is to show that 
\begin{equation} \label{easypart}
\bar{\la} \leq \la.
\end{equation}  
For this let $\al>0$ be a small number. We choose a smooth cut-off 
function $\chi_\al: M_1 \amalg M_2 \to [0,1]$ such that 
$\chi_\al = 1$ on $M_1 \amalg M_2 \setminus U(2 \al)$, 
$|d\chi_\al| \leq 2/\al$, and $\chi_\al = 0$ on $U(\al)$. 
Let $\psi$ be a smooth non-zero spinor such that 
$J(\psi) \leq \la + \de$ where $\de$ is a small positive number. 
On the support of $\chi_\al$ the metrics $g$ and $g_\al$ are 
conformal since $g_\th= F^2 g$ and hence by Formula 
(\ref{funct_conf}) we have
$$
\la_\th 
\leq J_\th\left(\chi_\al \beta^g_{g_\th}(F^{-\frac{n-1}{2}} \psi )
\right)  
= J(\chi_\al \psi)
$$
for $\th< \al$. Proceeding exactly as in the first part of the proof
of Lemma \ref{approx} we show that 
$\lim_{\al \to 0} J(\chi_\al \psi) = J(\psi) \leq \la + \de$. From
this Relation (\ref{easypart}) follows. 

Now we turn to the more difficult part of the proof, that 
\begin{equation} \label{diffpart}
\bar{\la} \geq \min \{ \la, \La_{n,k} \}.
\end{equation}
By Proposition \ref{aubin} we can assume that 
$\la_\th< \lamin(S^n,\si^n)$ for all $\th$, otherwise Relation 
(\ref{diffpart}) is trivial. From Theorem \ref{attained} we know that 
there exists a spinor field $\psi_\th\in \Gamma(\Si^{g_\al} N)$ of 
class $C^2$ such that 
\begin{equation*} 
\int_N |\psi_\th|^\frac{2n}{n-1} \,dv^{g_\th} = 1
\end{equation*}
and
\begin{equation} \label{eqa}
D^{g_{\th}} \psi_\th= \la_\th|\psi_\th|^{\frac{2}{n-1}} \psi_\th.
\end{equation}
We let $x_\th$ in $N$ be such that $|\psi_\th(x_\th)| = m_\th$ where
$m_\th\definedas \| \psi_\th\|_{L^{\infty}(N)}$. 

The proof continues divided in cases. 

\begin{caseI}
The sequence $(m_\th)$ is not bounded.
\end{caseI}

After taking a subsequence, we can assume that 
$\lim_{\th\to 0} m_\th= \infty$. We consider two subcases.

\begin{subcaseI.1}
There exists $a>0$ such that $x_\th\in N \setminus U^N(a)$ 
for an infinite number of $\th$.
\end{subcaseI.1}

We recall that $N \setminus U^N(a) = N_\ep \setminus U^N_\ep(a) =
M_1 \amalg M_2\setminus U(a)$.
By taking a subsequence we can assume that there exists
$\bar{x} \in M_1 \amalg M_2 \setminus U(a)$ such that 
$\lim_{\th\to 0} x_\th= \bar{x}$. We let 
$g_\th' \definedas m_\th^{\frac{4}{n-1}} g_\th$.
In a neighbourhood $U$ of $\bar{x}$ the metric $g_{\th}= F^2 g$ does
not depend on $\th$. We apply Lemma \ref{diffeom} with $O=U$, $\al = \th$, 
$p_\al = x_\th$, $p = \bar{x}$, $\ga_\al = g_\th=F^2 g$, and 
$b_\al = m_\th^{\frac{2}{n-1}}$. Let $r>0$. For $\th$ small enough 
Lemma \ref{diffeom} gives us diffeomorphisms 
$$
\Th_\th: 
B^n(r)
\to
B^{g_\th} ( x_\th, m_\th^{-\frac{2}{n-1}} r) 
$$ 
such that the sequence of metrics $(\Th_\th^* (g_\th'))$ tends to 
the Euclidean metric $\xi^n$ in $C^1(B^n(r))$. We let 
$\psi_\th' \definedas m_\th^{-1} \psi_\th$. By (\ref{confD}) we 
then have
$$
D^{g_\th'} \psi_\th' = \la_\th|\psi_\th'|^\frac{2}{n-1} \psi_\th'
$$
on $B^{g_\th} ( x_\th, m_\th^{-\frac{2}{n-1}} r)$ and 
\begin{equation*}
\begin{split}
\int_ {B^{g_\th} ( x_\th, m_{\th}^{-\frac{2}{n-1}} r)}
|\psi_\th'|^\frac{2n}{n-1}\,dv^{g_\th'} 
&=  
\int_{B^{g_\th} ( x_\th, m_\th^{-\frac{2}{n-1}} r)}
|\psi_\th|^\frac{2n}{n-1}\,dv^{g_\th} \\
&\leq 
\int_N |\psi_\th|^\frac{2n}{n-1} dv^{g_\th} \\ 
&= 1.
\end{split}
\end{equation*}
Here we used the fact that 
$dv^{g_\th'} = m_\th^\frac{2n}{n-1}\,dv^{g_\th}$. 
Since 
$$
\Th_\th: 
(B^n(r), \Th_\th^* (g_\th'))
\to
(B^{g_\th} ( x_\th, m_\th^{-\frac{2}{n-1}} r), g_\th') 
$$ 
is an isometry we can consider $\psi_\th'$ as a solution of 
$$
D^{\Th_\th^*(g_\th')} \psi_\th' 
= 
\la_\th|\psi_\th'|^\frac{2}{n-1} \psi_\th'
$$ 
on $B^n(r)$ with 
$\int_{B^n(r)} |\psi_\th'|^\frac{2n}{n-1} 
\, dv^{\Th_\th^*(g_\th')} \leq 1$. 
Since 
$\| \psi_\th\|_{L^\infty(B^n(r))} = |\psi_\th'(0)| = 1$ 
we can apply Lemma \ref{lim_sol} with  $V = \mR^n$, $\al=\th$, 
$g_\al = \Th_\th^*(g_\th')$, and $\psi_\al = \psi_\th'$ 
(we may apply this lemma since each compact set of $\mR^n$ 
is contained in some ball $B^n(r)$). This shows that there
exists a spinor $\psi$ of class $C^1$ on $(\mR^n,\xi^n)$ which 
satisfies 
$$
D^{\xi^n} \psi = \bar{\la} |\psi|^{\frac{2}{n-1}} \psi.
$$
Furthermore by (\ref{normlr_lim}) we have 
$$
\int_{ B^n(r)} |\psi|^\frac{2n}{n-1} \, dv^{\xi^n} 
= 
\lim_{\th\to 0} \int_{ B^{g_\th} 
( x_\th, m_\th^{-\frac{2}{n-1}} r)} |\psi_\th|^\frac{2n}{n-1}
\,dv^{g_\th}
\leq 1
$$ 
for any $r>0$. We conclude that
$\int_{\mR^n} |\psi|^\frac{2n}{n-1} \, dv^{\xi^n}  \leq 1$. 
Since $|\psi(0)|=1$ we also see that $\psi$ is not identically zero. 
As $(\mR^n, \xi^n)$ and $(S^n \setminus \{ \rm{pt} \}, \sigma^n)$
are conformal we can write $\sigma^n = \Phi^2 \xi^n$ for a positive
function $\Phi$. We define 
$\phi \definedas \Phi^{-\frac{n-1}{2}} \beta^{\xi^n}_{\si^n} \psi$. 
By Equation (\ref{confD}) it follows that 
$\phi \in L^{\frac{2n}{n-1}}(S^n)$ is a solution of
\begin{equation} \label{eqsn} 
D^{\sigma^n} \phi = \bar{\la} |\phi|^{\frac{2}{n-1}} \phi
\end{equation}
on $S^n \setminus \{ \rm{pt} \}$ of class $C^1$.
By Corollary \ref{removal} we know that $\phi$ can be extended to a
weak solution of (\ref{eqsn}) on all $S^n$ and by standard regularity
theorems it follows that $\phi \in C^1(S^n)$. Let $J^{\si^n}$ be the
functional associated to $(S^n,\sigma^n)$. By Equation (\ref{eqsn}) we
have
$$
\lamin(S^n, \sigma^n) \leq J^{\si^n}(\phi) = \bar{\la}
$$
where the inequality comes from Proposition \ref{aubin}.
We have proved Relation (\ref{diffpart}) in this subcase.

\begin{subcaseI.2}
For all $a>0$ it holds that 
$x_\th\notin M_1 \amalg M_2 \setminus U(a)$ for $\th$ 
sufficiently small.
\end{subcaseI.2}

This means that $x_\th$ belongs to $U^N(a)$ if $\th$ is sufficiently
small. This subset is diffeomorphic to $W \times I \times S^{n-k-1}$ 
where $I$ is an interval. Through this diffeomorphism $x_\th$ can be 
written as  
$$
x_\th= (y_\th, t_\th, z_\th)
$$
where $y_\th\in W$, 
$t_\th\in (-\ln R_0  + \ln \ep, -\ln \ep + \ln R_0 )$, and 
$z_\th\in S^{n-k-1}$. By taking a subsequence we can assume that 
$y_\th$, $\frac{t_\th}{A_{\th}}$, and $z_\th$ converge respectively to 
$y \in W$, $T \in [-\infty, +\infty]$, and $z \in S^{n-k-1}$.
We apply Lemma \ref{diffeom} with 
$V = W$, $\al=\th$, $p_\al= y_\th$, $p=y$, $\ga_\al= \tilde{h}_{t_\th}$, 
$\ga_0 = \tilde{h}_T$ (we define $\tilde{h}_{-\infty} \definedas h_1$ 
and $\tilde{h}_{+\infty} \definedas h_2$), and 
$b_\al = m_\th^{\frac{2}{n-1}} \al_{t_\th}$.  The lemma provides
diffeomorphisms
$$
\Th_\th^y : 
B^k(r)
\to
B^{ \tilde{h}_{t_\th}}
(y_\th,  m_\th^{- {\frac{2}{n-1}}}\al_{t_\th}^{-1} r) 
$$ 
for $r>0$ such that $(\Th_\th^y)^* (m_\th^{\frac{4}{n-1}}
\al_{t_\th}^2 \tilde{h}_{t_\th})$ tends to the Euclidean metric $\xi^k$
on $B^k(r)$ as $\th\to 0$. Next we apply Lemma \ref{diffeom}
with $V= S^{n-k-1}$, $\al = \th$, $p_\al = z_\th$, 
$\ga_\al = \ga_0 = \sigma^{n-k-1}$, and 
$b_\al = m_\th^\frac{2}{n-1}$. For $r'>0$ we get the existence of
diffeomorphisms 
$$
\Th_\th^z : 
B^{n-k-1}(r')
\to
B^{\sigma^{n-k-1}}(z_\th, m_\th^{-\frac{2}{n-1}} r')
$$
such that 
$(\Th_\th^z)^* (m_\th^{\frac{4}{n-1}} \sigma^{n-k-1})$ converges 
to $\xi^{n-k-1}$ on $B^{n-k-1}(r')$ as $\th\to 0$. For $r,r',r''>0$
we define
\begin{equation*}
\begin{split}
U_\th(r,r',r'') 
&\definedas  
B^{\tilde{h}_{t_\th}}(y_\th,  m_\th^{-\frac{2}{n-1}} \al_{t_\th}^{-1} r)
\times 
[t_\th- m_\th^{-\frac{2}{n-1}} r'', 
t_\th+ m_\th^{-\frac{2}{n-1}} r''] \\  
&\qquad 
\times  B^{\sigma^{n-k-1}}(z_\th, m_\th^{-\frac{2}{n-1}} r')
\end{split}
\end{equation*}
and 
\begin{equation*}
\begin{aligned}
\Th_\th: 
B^k(r) \times [-r'',r''] \times B^{n-k-1}(r') 
&\to 
U_\th(r,r',r'') 
\\
(y,s,z) 
&\mapsto  \left( \Th_\th^y (y), t(s), \Th_\th^z (z) \right),
\end{aligned}
\end{equation*}
where $t(s) \definedas t_\th+ m_\th^{\frac{2}{n-1}} s$. By
construction $\Th_\th$ is a diffeomorphism. As is readily seen
\begin{equation} \label{diffe}
\Th_\th^* (m_\th^\frac{4}{n-1} g_\th) 
= 
(\Th_\th^y)^* (m_\th^\frac{4}{n-1} \al_{t}^2  \tilde{h}_{t}) 
+ ds^2 +  
(\Th_\th^z)^*(m_\th^\frac{4}{n-1} \sigma^{n-k-1}).
\end{equation}
By construction of $\al_t$ one can verify that 
$$
\lim_{\th\to 0} 
\left\| 
\frac{\al_{t_\th}}{\al_t} - 1 
\right\|_{C^1( [t_\th- m_\th^{-\frac{2}{n-1}} r'', 
t_\th+ m_\th^{-\frac{2}{n-1}} r''])} 
=0
$$
for all $R>0$ since $\frac{df}{dt}$ and $\frac{d^2f }{dt^2}$ are  
uniformly bounded. Moreover it is clear that 
$$ 
\lim_{\th\to 0} 
\left| 
\tilde{h}_{t} - \tilde{h}_{t_\th} 
\right|_{C^1( B^{\tilde{h}_{t_\th}}
(y_\th, m_\th^{-\frac{2}{n-1}} \al_{t_\th}^{-1} R))} 
= 0 
$$
uniformly in $t \in [t_\th- m_\th^{-\frac{2}{n-1}} r'', 
t_\th+ m_\th^{-\frac{2}{n-1}} r'']$. As a consequence
$$ 
\lim_{\th\to  0} 
\left|
(\Th_\th^y)^* \left(m_\th^{\frac{4}{n-1}}\left(\al_{t}^2
\tilde{h}_{t} - \al_{t_\th}^2 \tilde{h}_{t_\th}\right) \right) 
\right|_{C^1(B^k(r))}
=0
$$ 
uniformly in $t$. This implies that the sequence  
$(\Th_\th^y)^* (m_\th^{\frac{4}{n-1}}\al_{t}^2 \tilde{h}_{t})$
tends to the Euclidean metric $\xi^k$ in $C^1(B^k(r))$ uniformly in
$t$ as $\th\to 0$. From \eref{diffe} we know that the sequence 
$(\Th_\th^z)^* (m_\th^{\frac{4}{n-1}} \sigma^{n-k-1})$ tends 
to the Euclidean metric $\xi^{n-k-1}$ on $B^{n-k-1}(r')$ 
as $\th\to 0$. Returning to (\ref{diffe}) we obtain that the sequence
$\Th_\th^* (m_\th^{\frac{4}{n-1}} g_\th)$ tends to 
$\xi^n = \xi^k + ds^2 + \xi^{n-k-1}$ on 
$B^k(r) \times [-r'',r''] \times B^{n-k-1}(r')$. 
As in Subcase~I.1 we apply Lemma \ref{lim_sol} to get a spinor $\psi$
of class $C^1$ on $\mR^n$ which satisfies
$$
D^{\xi^n} \psi = \bar{\la} |\psi|^{\frac{2}{n-1}} \psi
$$
with $\int_{ B^n (r)} |\psi|^\frac{2n}{n-1} dx \leq 1$ for all 
$r\in \mR^+$. Lemma \ref{lim_sol} tells us that $|\psi(0)|=1$ so 
$\psi$ does not vanish identically.
As in Subcase~I.1 we conclude that
$$
\la \leq \lamin(S^n, \sigma^n) \leq \bar{\la}.
$$
This ends the proof of Theorem \ref{main} in Case I.

\begin{caseII}
There exists a constant $C_1$ such that $m_\th\leq C_1$ 
for all $\th$. 
\end{caseII}

Again we consider two subcases.

\begin{subcaseII.1}
Assume that  
\begin{equation} \label{assomp}
\liminf_{\th\to 0} \int_{N \setminus U^N(a)} 
|\psi_\th|^{\frac{2n}{n-1}}\,dv^{g_\th} >0
\end{equation}
for some number $a>0$.
\end{subcaseII.1}

Let $K$ a compact subset such that 
$K \subset M_1 \amalg M_2 \setminus W'$. 
Choose a small number $b$ such that 
$K \subset M_1 \amalg M_2 \setminus U(2b)= N \setminus U^N(2b)$. 
Let $\chi \in C^{\infty}(M_1 \amalg M_2)$, $0 \leq \chi \leq 1$, be
a cut-off function equal to $1$ on $M_1 \amalg M_2 \setminus U(2b)$
and equal to $0$ on $U(b)$. Set $\psi_\th' \definedas 
F^{\frac{n-1}{2}} (\beta^g_{g_\th})^{-1} \psi_\th$.
Since $g_{\th}= F^2 g$ on the support of $\chi$ we have  
\begin{equation*} 
D^g \psi_\th' = \la_\th|\psi_\th'|^\frac{2}{n-1} \psi_\th'
\end{equation*}
on this set. For $r>0$ we have
\begin{equation*}
\begin{split}
&\int_{M_1 \amalg M_2} |D^g (\chi \psi_\th')|^r\,dv^g \\
&\qquad
= 
\int_{M_1 \amalg M_2} \left| 
\grad^g \chi \cdot \psi_\th' 
+ \chi \la_\th|\psi_\th'|^\frac{2}{n-1} \psi_\th' 
\right|^r \,dv^g \\
&\qquad
\leq  
2^r \left( 
\int_{M_1 \amalg M_2} | \grad^g \chi|^r |\psi_\th'|^r \,dv^g 
+ \la_\th^r \int_{M_1 \amalg M_2} \chi^r
|\psi_\th'|^{\frac{(n+1)r}{n-1}}\,dv^g 
\right) \\
&\qquad
\leq C.
\end{split}
\end{equation*}
since $m_\th\leq C_1$. Together with Relation (\ref{ellip_est2}) we 
get that the sequence $(\chi \psi_\th')$ is bounded in 
$H_1^r({M_1 \amalg M_2})$ for all $r>0$. Proceeding as in the proof 
of Lemma \ref{lim_sol} we get a $C^1$ spinor $\psi_0$ defined on $K$ 
such that a subsequence of $(\psi_\th')$ converges to $\psi_0$ in 
$C^0(K)$ and which satisfies 
\begin{equation} \label{eqlim_case2}
D^g \psi_0 = \bar{\la} |\psi_0|^\frac{2}{n-1} \psi_0.
\end{equation}
Furthermore the convergence in $C^0$ implies that 
$$
\int_K |\psi_0|^\frac{2n}{n-1}dv^g 
\leq 
\liminf_{\th\to 0} 
\int_K |\psi_\th'|^\frac{2n}{n-1}dv^g 
= 
\liminf_{\th\to 0}
\int_K |\psi_\th|^\frac{2n}{n-1}dv^{g_\th}
\leq 1.
$$
Repeating the same for a sequence of compact sets which exhausts
${M_1 \amalg M_2} \setminus W'$ and taking a diagonal 
subsequence we can extend $\psi_0$ to ${M_1 \amalg M_2} \setminus
W'$. Since $\psi_0 \in L^\frac{2n}{n-1}({M_1 \amalg M_2} \setminus W')
= L^\frac{2n}{n-1}({M_1 \amalg M_2})$ we can use Theorem \ref{removal}
to extend $\psi_0$ to a weak solution of Equation (\ref{eqlim_case2})
on $M_1 \amalg M_2$. Note here that since $D^g$ is invertible we have 
$\bar{\la} > 0$. By standard regularity theorems we conclude that 
$\psi_0 \in C^1({M_1 \amalg M_2})$. By (\ref{assomp}) we have  
\begin{equation*}
\begin{split}
\int_{ {M_1 \amalg M_2} \setminus U(a)}
|\psi_0|^{\frac{2n}{n-1}}\,dv^g  
&= \lim_{\th\to 0} \int_{{M_1 \amalg M_2} \setminus U(a)} 
|\psi_\th'|^{\frac{2n}{n-1}}\,dv^g \\
&= \lim_{\th\to 0} \int_{{M_1 \amalg M_2} \setminus U(a)} 
|\psi_\th|^{\frac{2n}{n-1}}\,dv^{g_\th} \\
&> 0,
\end{split}
\end{equation*}
and we conclude that $\psi_0$ does not vanish identically. 
Equation (\ref{eqlim_case2}) then leads to
$$
\la \leq J(\psi_0) 
= 
\bar{\la} \left(\int_{M_1 \amalg M_2} |\psi_0|^{\frac{2n}{n-1}}\,dv^g
\right)^{\frac{n+1}{n} -1} 
\leq
\bar{\la},
$$
which proves Theorem \ref{main} in this case.

\begin{subcaseII.2}
We have 
\begin{equation} \label{assomp2} 
\liminf_{\th\to 0} \int_{N \setminus U^N(a)} 
|\psi_\th|^{\frac{2n}{n-1}}\,dv^{g_\th}= 0
\end{equation}
for all $a>0$.
\end{subcaseII.2}

This case is the most difficult one and we proceed in several steps.
The assumption here is that we have a sequence $(\th_i)$ which tends
to zero as $i \to\infty$ with the property that the integral above
tends to zero for all $a>0$. We will abuse notation and write 
$\lim_{\th\to 0}$ for what should be a limit as $i \to \infty$ or a 
limit of a subsequence.

For positive $a$ and $\th$ let 
$$
\ga_\th(a) 
\definedas
\frac{\int_{N \setminus U^N(a)} |\psi_\th|^2 \,dv^{g_\th} }
{\int_{U^N(a)} |\psi_\th|^2 \,dv^{g_\th}}
$$
The first step is to establish an estimate for $\ga_\th(a)$.

%
\begin{step}
There is a constant $C_0$ so that 
\begin{equation} \label{l2_linf}
1 \leq C_0 \left(\ga_\th(a) 
+ \| \psi_\th\|^{\frac{4}{n-1}}_{L^\infty(U^N(2a))}  \right)
\end{equation}
for all $a>0$.
\end{step}

Let $\chi \in C^\infty(N)$, $0\leq \chi \leq 1$, be a cut-off
function with $\chi = 1$ on $U^N(a)$ and $\chi = 0$ on 
$N \setminus U^N(2a) = {M_1 \amalg M_2} \setminus U(2a)$. Since the
definitions of $U^N(a)$ and $U(a)$ use the distance to $W'$ for the
metric $g$ we can and do assume that $|d \chi|_g \leq 2/a$. For
the metric $g_\th$ this gives 
$$
|d \chi|_{g_\th} 
= F^{-1} |d \chi|_g
= r |d \chi|_{g} 
\leq 2a \frac{2}{a} 
= 4.
$$
From Lemma \ref{sphere_bundle} and Equation (\ref{eqa}) it
follows that   
\begin{equation*}
\begin{split}
\frac{(n-k-1)^2 }{4}
&\leq 
\frac{\int_N |D^{g_\th} (\chi \psi_\th) |^2 \,dv^{g_\th}}
{\int_N| \chi \psi_\th|^2 \, dv^{g_\th} } \\
&= 
\frac{\int_N |d\chi|^2_{g_\th} |\psi_\th|^2 \,dv^{g_\th} 
+ \la_\th^2 \int_N \chi^2 
|\psi_\th|^{\frac{2(n+1)}{n-1}} \,dv^{g_\th}}
{\int_N| \chi \psi_\th|^2 \,dv^{g_\th} } \\
&\leq  
\frac{ 16 \int_{U^N(2a) \setminus U^N(a)}|\psi_\th|^2 \,dv^{g_\th} + 
\la_\th^2 \| \psi_\th\|^{\frac{4}{n-1}}_{L^\infty(U^N(2a))} 
\int_{N} |\chi \psi_\th|^2 \,dv^{g_\th}} 
{\int_{N} |\chi\psi_\th|^2 \,dv^{g_\th}} \\
&\leq  
\frac{16\int_{U^N(2a) \setminus U^N(a)}|\psi_\th|^2 \,dv^{g_\th}}
{\int_{U^N(a)} |\psi_\th|^2\,dv^{g_\th}} 
+ \la_\th^2 \| \psi_\th\|^{\frac{4}{n-1}}_{L^\infty(U^N(2a))} \\
&\leq   
16 \ga_\th(a) 
+ \la_\th^2  \| \psi_\th\|^{\frac{4}{n-1}}_{L^\infty(U^N(2a))}.
\end{split} 
\end{equation*}
Using that $\la_\th\leq \lamin(S^n,\si^n)$ by Proposition \ref{aubin} 
we obtain Relation (\ref{l2_linf}) with
$$
C_0 
\definedas
\frac{4}{(n-k-1)^2} \max 
\left\{ 
16, \lamin(S^n)^2
\right\}.  
$$
This ends the proof of Step~1. 

%
\begin{step} 
There exist a sequence of positive numbers $(a_\th)$ which
tends to $0$ with $\th$ and constants $0 < m < M$ such that  
\begin{equation} \label{norminfbound}
m 
\leq  
{\| \psi_\th\|}_{L^\infty(U^N(2a_\th))} 
\leq 
M
\end{equation}
for all $\th$.
\end{step}

By (\ref{assomp2}) we have
$$
\lim_{\th\to 0} \int_{N \setminus U^N(a)} 
|\psi_\th|^{\frac{2n}{n-1}} \,dv^{g_\th}
= 0
$$
for all $a > 0$. Since $\vol\left(N \setminus U^N(a),g_\th\right)$
does not depend on $\th$ if $\th< a$ it follows that  
$$
\lim_{\th\to 0} {\left( \int_{N \setminus U^N(a)}
|\psi_\th|^{\frac{2n}{n-1}}\,dv^{g_\th} \right)}^{\frac{n-1}{n}} 
\vol (N \setminus U^N(a),g_\th)^{\frac{1}{n}} = 0
$$
for all $a$. Hence we can take a sequence $(a_\th)$ which tends
sufficiently slowly to $0$ so that
\begin{equation} \label{choiceofa} 
\lim_{\th\to 0} 
\left( 
\int_{N \setminus U^N(a_\th)} 
|\psi_\th|^{\frac{2n}{n-1}} \,dv^{g_\th} 
\right)^{\frac{n-1}{n}} 
\vol(N \setminus U^N(a_\th),g_\th)^{\frac{1}{n}} 
= 0.
\end{equation}
Using the H\"older inequality we get  
\begin{equation*}
\begin{split}
\ga_\th(a_\th) 
&= 
\frac{\int_{N \setminus U^N(a_\th)} |\psi_\th|^2\,dv^{g_\th}}
{\int_{U^N(a_\th)} |\psi_\th|^2\,dv^{g_\th}} \\
&\leq  
\frac{(\int_{N \setminus U^N(a_\th)} 
|\psi_\th|^{\frac{2n}{n-1}} \,dv^{g_\th} )^{\frac{n-1}{n}} 
\vol (N \setminus U^N(a_\th), g_\th)^{\frac{1}{n}}}
{{\| \psi_\th\|}^{-\frac{2}{n-1}}_{L^\infty(U^N(a_\th))} 
\int_{U^N(a_\th)} |\psi_\th|^{\frac{2n}{n-1}} \,dv^{g_\th}}.
\end{split}
\end{equation*}
The numerator of this expression tends to $0$ by Relation
(\ref{choiceofa}). Further by (\ref{choiceofa}) we have 
\begin{equation*}
\begin{split}
\lim_{\th\to 0} 
\int_{U^N(a_\th)} |\psi_\th|^{\frac{2n}{n-1}} \,dv^{g_\th} 
&=  
\lim_{\th\to 0} \int_N |\psi_\th|^{\frac{2n}{n-1}} \,dv^{g_\th} 
-  
\int_{N \setminus U^N(a_\th)} |\psi_\th|^{\frac{2n}{n-1}} 
\,dv^{g_\th} \\
&= 1.
\end{split}
\end{equation*}
Together with the fact that 
$\| \psi_\th\|_{L^\infty(U^N(a_\th))} \leq m_\th\leq C_1$ 
we obtain that
$$
\lim_{\th\to 0} \ga_\th(a_\th) = 0.
$$
From Relation (\ref{l2_linf}) applied with $a = a_\th$ we know that
$\| \psi_\th\|_{L^\infty(U^N(2a_\th))}$ is bounded from below. 
Moreover, by the assumption of Case II we have that 
$\| \psi_\th\|_{L^\infty(U^N(2a_\th))} \leq m_\th\leq C_1$.
This finishes the proof of Step~2.

%
\begin{step} We have 
\begin{equation*} 
\bar{\la} \geq \La_{n,k}.
\end{equation*}
\end{step}

Let $x_\th$ be a point in the closure of $U^N(2a_\th)$ such that 
$|\psi_\th(x_\th)| = \| \psi_\th\|_{L^\infty(U^N(2a_\th))}$. 
As in Subcase~I.2 we write $x_\th= (y_\th, t_\th, z_\th)$ 
where $y_\th\in W$, 
$t_\th\in (-\ln R_0 + \ln \ep, -\ln \ep + \ln R_0)$, and 
$z_\th\in S^{n-k-1}$.
By restricting to a subsequence we can assume that $y_\th$, 
$t_\th/ A_{\th}$, and $z_\th$ converge respectively
to $y \in W$, $T \in [-\infty, +\infty]$, and $z \in S^{n-k-1}$.
We apply Lemma \ref{diffeom} with $V = W$, $\al = \th$, 
$p_\al = y_\th$, $p=y$, $\ga_\al= \tilde{h}_{t_\th}$, 
$\ga_0 = \tilde{h}_T$, and $b_\al = \al_{t_\th}$
(recall that $\tilde{h}_t$ and $\al_t$ were defined in 
\eref{def.tildeh} and \eref{def.al_t})
and conclude that there is a diffeomorphism
$$
\Th_\th^y : 
B^k(r)
\to
B^{ \tilde{h}_{t_\th}}(y_\th, \al_{t_\th}^{-1} r) 
$$ 
for $r>0$ such that 
$({\Th_\th^y})^* (\al_{t_\th}^2 \tilde{h}_{t_\th})$  
converges to the Euclidean metric $\xi^k$ on $B^k(r)$. 
For $r,r'>0$ we define
$$
U_\th(r,r') 
\definedas
B^{\tilde{h}_{t_\th}}(y_\th,\al_{t_\th}^{-1} r) 
\times 
[t_\th- r', t_\th+ r'] \times S^{n-k-1}
$$
and 
\begin{equation*} 
\begin{aligned}
\Th_\th: 
B^k(r) \times [-r',r'] \times S^{n-k-1}
&\to  
U_\th(r,r')
\\
(y,s,z) 
&\mapsto
\left( \Th_\th^y (y), t(s), z \right), 
\end{aligned}
\end{equation*}
where $t(s) \definedas t_\th+ s$. By construction $\Th_\th$ is a
diffeomorphism. Since 
$g_\th= \al_t^2 \tilde{h}_t + dt^2+ \sigma^{n-k-1}$ we see that
\begin{equation} \label{metric=}
\Th_\th^* ( g_\th) 
= 
\frac{\al_t^2}{\al_{t_\th}^2}
({\Th_\th^y})^* ( \al_{t_\th}^2  \tilde{h}_{t}) + ds^2 +  
\sigma^{n-k-1}.
\end{equation}
We will now find the limit of $\Th_\th^* ( g_\th) $ in the
$C^1$ topology. We define 
$\ka \definedas \lim_{\theta\to 0} f'(t_\theta)$.

\begin{lemma} \label{lim_metric} 
The sequence of metrics $\Th_\th^* ( g_\th)$ tends to 
$$
G_\ka
=
\eta^{k+1}_\ka + \sigma^{n-k-1}
= 
e^{2\ka s}\xi^k + ds^2 + \sigma^{n-k-1}
$$ 
in $C^1$ on 
$B^k(r) \times [-r',r'] \times S^{n-k-1}$ for fixed $r,r'>0$.
\end{lemma}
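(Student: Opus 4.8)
The plan is to start from the explicit formula \eref{metric=}: writing $t=t(s)=t_\th+s$,
\[
\Th_\th^*(g_\th)
=
\frac{\al_{t}^2}{\al_{t_\th}^2}\,(\Th_\th^y)^*\!\bigl(\al_{t_\th}^2\,\tilde h_{t}\bigr)+ds^2+\sigma^{n-k-1},
\]
so since this metric is block diagonal and the blocks $ds^2$ and $\sigma^{n-k-1}$ do not depend on $\th$, it suffices to treat the scalar conformal factor $\al_t^2/\al_{t_\th}^2$ and the $W$-block $(\Th_\th^y)^*(\al_{t_\th}^2\tilde h_t)$ separately. Concretely I will prove (a) $\al_t^2/\al_{t_\th}^2\to e^{2\ka s}$ in $C^1([-r',r'])$, and (b) $(\Th_\th^y)^*(\al_{t_\th}^2\tilde h_t)\to\xi^k$ in $C^1(B^k(r))$, uniformly in $s\in[-r',r']$ and with $s$-derivative tending to $0$; a product-rule argument then gives $\Th_\th^*(g_\th)\to e^{2\ka s}\xi^k+ds^2+\sigma^{n-k-1}=G_\ka$ in $C^1$ on $B^k(r)\times[-r',r']\times S^{n-k-1}$.

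For (a), recall from \eref{def.al_t} that $\al_t=e^{f(t)}$, so $\al_t^2/\al_{t_\th}^2=e^{2\phi_\th(s)}$ with $\phi_\th(s)\definedas f(t_\th+s)-f(t_\th)$. Then $\phi_\th(0)=0$, $\phi_\th'(s)=f'(t_\th+s)$ and $\phi_\th''(s)=f''(t_\th+s)$, so from $\|f''\|_{L^\infty}\to 0$ and $f'(t_\th)\to\ka$ (the very definition of $\ka$) one integrates to get $\phi_\th'\to\ka$ and then $\phi_\th(s)\to\ka s$, both uniformly on $[-r',r']$; composing with $x\mapsto e^{2x}$ gives (a). Note also that $|\ka|\le 1$ because $|f'|\le1$ everywhere, so $G_\ka$ is one of the metrics occurring in the definition of $\La_{n,k}$ -- this is what is used immediately after the lemma.

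For (b), the first point is that $\al_{t_\th}=e^{f(t_\th)}\to\infty$: the point $x_\th$ lies in the closure of $U^N(2a_\th)$ with $a_\th\to 0$, hence $r(x_\th)\le 2a_\th$, and from the shape of $f$ (equal to $-\ln r$ for $r\ge\th$, equal to $\ln A_\th$ for $r\le\de_2$, and monotone in $-\ln r$ in between) together with $\th\to 0$ and $A_\th\to\infty$ one gets $f(t_\th)\ge\min\{-\ln(2a_\th),-\ln\th\}\to\infty$. Hence Lemma~\ref{diffeom}, applied exactly as in Step~3 with $b_\al=\al_{t_\th}\to\infty$, is legitimate and yields $(\Th_\th^y)^*(\al_{t_\th}^2\tilde h_{t_\th})\to\xi^k$ in $C^1(B^k(r))$. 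It remains to pass from $\tilde h_{t_\th}$ to $\tilde h_t$: by \eref{def.tildeh}, $\tilde h_t-\tilde h_{t_\th}=\bigl(\chi(A_\th^{-1}t)-\chi(A_\th^{-1}t_\th)\bigr)(h_2-h_1)$, and because $|t-t_\th|=|s|\le r'$ is small compared with the scale $A_\th$ on which $\chi(A_\th^{-1}\cdot)$ varies, this difference, its $s$-derivative $A_\th^{-1}\chi'(A_\th^{-1}t)(h_2-h_1)$ and its $\tilde h_{t_\th}$-covariant derivative are all $O(A_\th^{-1})$ uniformly in $s$ (here $h_1,h_2$ are fixed metrics on the compact manifold $W$ and $\tilde h_{t_\th}$ is a convex combination of them, so they are uniformly comparable). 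Using that the pointwise $\tilde h_{t_\th}$-norm of $\al_{t_\th}^2(\tilde h_t-\tilde h_{t_\th})$ equals the scale-invariant $|\tilde h_t-\tilde h_{t_\th}|_{\tilde h_{t_\th}}$, that the corresponding covariant derivative carries an extra factor $\al_{t_\th}^{-1}$, and that $(\Th_\th^y)^*(\al_{t_\th}^2\tilde h_{t_\th})\to\xi^k$ makes $\Th_\th^y$ uniformly bi-Lipschitz onto its image, one concludes $(\Th_\th^y)^*(\al_{t_\th}^2(\tilde h_t-\tilde h_{t_\th}))\to 0$ in $C^1(B^k(r))$ uniformly in $s$ and with $s$-derivative tending to $0$, which proves (b). This is the same mechanism as in Subcase~I.2; the only new feature is that over the fixed interval $[-r',r']$ the conformal factor no longer tends to $1$ but to $e^{2\ka s}$, which is exactly what turns the $W$-block into the hyperbolic factor $\eta^{k+1}_\ka$.

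The step requiring genuine care is this passage from $\tilde h_{t_\th}$ to $\tilde h_t$ inside (b): a priori the large conformal blow-up $\al_{t_\th}^2$ could amplify the error, but it does not, precisely because $\chi(A_\th^{-1}\cdot)$ varies slowly and because the $C^0$-norm of a $(0,2)$-tensor is scale invariant, so the blow-up is absorbed. Given (a), (b) and the constant blocks $ds^2$, $\sigma^{n-k-1}$, the product rule finishes the proof.
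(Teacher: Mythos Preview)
Your proof is correct and follows essentially the same route as the paper's: both start from the block decomposition \eref{metric=}, show $\al_t/\al_{t_\th}\to e^{\ka s}$ in $C^1$ via the Taylor expansion of $f$ and the hypothesis $\|f''\|_{L^\infty}\to 0$, and then control the $W$-block by splitting off $\tilde h_t-\tilde h_{t_\th}=O(A_\th^{-1})$ and invoking the conclusion of Lemma~\ref{diffeom}. Your version is in fact slightly more complete, since you explicitly verify the hypothesis $b_\al=\al_{t_\th}\to\infty$ of Lemma~\ref{diffeom} (using $r(x_\th)\le 2a_\th\to 0$ and the shape of $f$), which the paper uses without comment.
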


\begin{proof}
Recall that $\al_t=e^{f(t)}$. The intermediate value theorem tells
us that
$$
\left|
f(t) - f(t_\th) - f'(t_\th)(t-t_\th)
\right|
\leq 
\frac{r'^2}{2} 
\max_{\xi \in [t_\th-r',t_\th+r']} 
\left| f''(\xi) \right|
$$
for all $t\in [t_\th-r', t_\th+r']$. On the other hand we assume that 
$f''(t)\to 0$ as $\th\to 0$, so 
$$
\left\|
f(t) - f(t_\th) - f'(t_\th)(t-t_\th)
\right\|_{C^0([t_\th-r',t_\th+r'])} 
\to 0
$$
as $\th\to 0$ (and $r'$ fixed). Furthermore
\begin{equation*}
\begin{split}
\left|
\ddt \left( f(t) - f(t_\th) - f'(t_\th)(t-t_\th) \right)
\right|
&=
\left| f'(t) - f'(t_\th) \right| \\
&= 
\left| \int_{t_\th}^t f''(s) \,ds \right| \\
&\leq 
r' \max_{\xi\in [t_\th-r',t_\th+r']} 
\left| f''(\xi) \right| \\
&\to 0
\end{split}
\end{equation*}
as $\th\to 0$. Together with $\ka=\lim_{\theta\to 0}f'(t_\theta)$
we have
$$
\left\| 
f(t)-f(t_\th)-\ka(t-t_\th)
\right\|_{C^1([t_\th-r',t_\th+r'])}
\to 0.
$$
Exponentiation of functions 
is a continuous map  
$$
C^1([t_\th-r',t_\th+r'])
\ni
\ti f \mapsto \exp \circ \ti f
\in
C^1([t_\th-r',t_\th+r']).
$$ 
Hence
$$
\left\|  
\frac{\al_t}{\al_{t_\th}} - e^{\ka(t-t_\th)} 
\right\|_{ C^1([t_\th- r',t_\th+ r']) }
= 
\left\| 
e^{f(t)-f(t_\th)} - e^{\ka(t-t_\th)}
\right\|_{ C^1([t_\th- r',t_\th+ r']) }   
\to 0
$$
as $\th\to 0$. We now write  
$\al^2_t \tilde{h}_t 
= \al^2_t (\tilde{h}_t - \tilde{h}_{t_\th}) 
+ \frac{\al^2_t}{\al^2_{t_\th}} \al^2_{t_\th} \tilde{h}_{t_\th}$. 
Using the fact that 
$$ 
\lim_{\th\to 0}  
\left\| 
\tilde{h}_{t} - \tilde{h}_{t_\th} 
\right\|_{C^1( B_{\tilde{h}_{t_\th}}(y_\th, \al_{t_\th}^{-1} R))} 
= 0 
$$ 
uniformly for $t \in [t_\th-r', t_\th-r']$ we get that the
sequence $\frac{\al_t^2}{\al_{t_\th}^2} 
(\Th_\th^y)^* ( \al_{t_\th}^2 \tilde{h}_{t})$ tends to 
$e^{2\ka s}\xi^k$ in $C^1$ on $B^k(r)$. Going back to Relation 
(\ref{metric=}) this proves Lemma \ref{lim_metric}. 
\end{proof}

We continue with the proof of Step 3. As in subcases I.1 and I.2 
we apply Lemma \ref{lim_sol} with 
$(V,g) = (\mR^{k+1} \times S^{n-k-1}, G_\ka)$, 
$\al = \th$, and $g_\al = \Th_\th^* (g_\th)$ (we can apply this 
lemma since any compact subset of $\mR^{k+1} \times S^{n-k-1}$ is
contained in some $B^k(r) \times [-r',r'] \times S^{n-k-1}$). 
We obtain a $C^1$ spinor $\psi$ which is a solution of 
\begin{equation*} 
D^{G_\ka} \psi = \bar{\la} |\psi|^\frac{2}{n-1} \psi
\end{equation*} 
on $(\mR^{k+1} \times S^{n-k-1}, G_\ka)$. 
From (\ref{normlr_lim}) it follows that
$$
\int_{\mR^{k+1} \times S^{n-k-1}} 
|\psi|^{\frac{2n}{n-1}} \,dv^{G_\ka}
\leq 1.
$$ 
From (\ref{norminf_lim}) it follows that 
$\psi \in L^\infty(\mR^{k+1} \times S^{n-k-1})$, and from 
(\ref{norminf_lim}) and (\ref{norminfbound}) it follows that $\psi$ 
does not vanish identically. We want to show that 
$\psi \in L^2(\mR^{k+1} \times S^{n-k-1})$. From (\ref{normlr_lim}) 
we get that  
\begin{equation} \label{psil2}
\begin{split}
\int_{B^k(r) \times [-r',r'] \times S^{n-k-1}}
|\psi|^2\,dv^{G_\ka}  
&= 
\lim_{\th\to 0} 
\int_{U_\th(r,r')} |\psi_\th|^2 dv^{g_\th} \\
&\leq 
\lim_{\th\to 0}
\int_{U^N(a)} |\psi_\th|^2 dv^{g_\th}
\end{split}
\end{equation}
for some fixed number $a>0$ independent of $r$, $r'$ and $\th$. Let
$\chi$ be defined as in Step~1. Using the H\"older inequality,
Lemma \ref{sphere_bundle}, and Equation (\ref{eqa}) we see that   
\begin{equation*}
\begin{split}
&\frac{(n-k-1)^2 }{4} \\ 
&\qquad
\leq 
\frac{\int_N |D^{g_\th} (\chi \psi_\th) |^2 \,dv^{g_\th}}
{\int_N| \chi \psi_\th|^2 \,dv^{g_\th} } \\ 
&\qquad
= 
\frac{\int_N |d\chi|^2_{g_\th} |\psi_\th|^2 \,dv^{g_\th} +
\la_\th^2 \int_N \chi^2 |\psi_\th|^{\frac{2(n+1)}{n-1}} \,dv^{g_\th}}
{\int_N| \chi \psi_\th|^2 \,dv^{g_\th}} \\
&\qquad
\leq 
\frac{16 \int_{U^N(2a) \setminus U^N(a)}|\psi_\th|^2 \,dv^{g_\th} + 
\la_\th^2 \| \psi_\th\|^{\frac{2}{n-1}}_{L^\infty(U^N(2a))}  
\int_{U^N(2a)} |\psi_\th|^\frac{2n}{n-1} \,dv^{g_\th}} 
{\int_{U^N(a)} |\psi_\th|^2\,dv^{g_\th} }.
\end{split}
\end{equation*} 
We have 
$$ 
\la_\th^2 \| \psi_\th\|^{\frac{2}{n-1}}_{L^\infty(U^N(2a))}  
\int_{U^N(2a)} |\psi_\th|^\frac{2n}{n-1} \,dv^{g_\th} 
\leq
\lamin(S^n,\si^n)^2 C_1^\frac{2}{n-1}
$$
and 
\begin{equation*}
\begin{split}
&\int_{U^N(2a) \setminus U^N(a)}|\psi_\th|^2 \,dv^{g_\th} \\
&\qquad
\leq
{\left( 
\int_{U^N(2a) \setminus U^N(a)}|\psi_\th|^\frac{2n}{n-1} \,dv^{g_\th} 
\right)}^{\frac{n-1}{n}} 
\vol \left(  U^N(2a) \setminus  U^N(a), g_\th\right)^{\frac{1}{n}} \\
&\qquad
\leq 
\vol\left(  U^N(2a) \setminus U^N(a), g_\th\right)^{\frac{1}{n}}.
\end{split}
\end{equation*} 
Since $g_\th$ does not depend on $\th$ on $U^N(2a) \setminus U^N(a)$
for $\th<a$, we get the existence of a constant $C$
such that 
$$
\frac{(n-k-1)^2 }{4} 
\leq 
\frac{C}{\int_{U^N(a)} |\psi_\th|^2\,dv^{g_\th}}.
$$
Together with (\ref{psil2}) we obtain that 
$$
\int_{B^k(r) \times [-r',r'] \times S^{n-k-1}} 
|\psi|^2\,dv^{G_\ka} 
\leq C
$$
where $C$ is independent of $r$ and $r'$. This proves that 
$\psi \in L^2 (\mR^{k+1} \times S^{n-k-1})$. Since
the spinor $\psi$ is non-zero and 
$$
\psi \in  L^\infty(\mR^{k+1} \times S^{n-k-1}) 
\cap  \Cloc^1(\mR^{k+1} \times S^{n-k-1}) 
\cap L^2(\mR^{k+1} \times S^{n-k-1})
$$
with 
$$
\int_{\mR^{k+1} \times S^{n-k-1}} 
|\psi|^{\frac{2n}{n-1}} \,dv^{G_\ka}
\leq 1
$$ 
we get that $\bar{\la} \geq \La_{n,k}$ by the definition of
$\La_{n,k}$. This ends the proof of this subcase and the proof of
Theorem \ref{main}.

\subsection{Proof of Theorem \ref{lamin_hyperb}}

We prove Theorem \ref{lamin_hyperb} by contradiction. Assume that
there is a sequence $\ka_i \in [-1,1]$, ${i \in \mN}$, for which 
$$
\lim_{i \to \infty}
\lamintilde(\mR^{k+1} \times S^{n-k-1}, 
G_{\ka_i}) = 0.
$$
After removing the indices $i$ for which $\lamintilde$ is infinite
we have for all $i$ a solution of
\begin{equation} \label{eqpsii}
D^{G_{\ka_i}
} \psi_i =\la_i |\psi_i|^{\frac{2}{n-1}} \psi_i
\end{equation} 
where $\la_i \to 0$ as $i \to \infty$. Moreover, the spinors 
$\psi_i$ are in $L^\infty \cap L^2 \cap \Cloc^1$ and
\begin{equation*} 
\int_{\mR^{k+1} \times  S^{n-k-1}} 
|\psi_i|^{\frac{2n}{n-1}} \,dv^{G_{\ka_i}
} 
\leq 1.
\end{equation*}
Let $m_i  \definedas \| \psi_i \|_{L^\infty}$. We cannot assume 
that $m_i$ is attained, but since 
$(\mR^{k+1} \times S^{n-k-1}, G_{\ka_i})$ 
is a symmetric space we can compose $\psi_i$ with isometries so 
that $|\psi_i(P)|> m_i/2$ for some fixed point 
$P \in \mR^{k+1} \times  S^{n-k-1}$. First we prove that  
\begin{equation} \label{sup=infty}
\lim_{i \to \infty} m_i = \infty.
\end{equation} 
By Lemma \ref{sphere_bundle} and Equation (\ref{eqpsii}) we have 
\begin{equation*} 
\begin{split}
\frac{(n-k-1)^2}{4} 
&\leq 
\frac{\int_{\mR^{k+1} \times S^{n-k-1}}  
|D^{G_{\ka_i}
} \psi_i|^2 \,dv^{G_{\ka_i}
} }
{\int_{\mR^{k+1} \times S^{n-k-1}} 
|\psi_i|^2\,dv^{G_{\ka_i}
} } \\ 
&\leq 
\frac{\la_i^2 \int_{\mR^{k+1} \times S^{n-k-1}}  
|\psi_i|^\frac{2(n+1)}{n-1} \,dv^{G_{\ka_i}
}}
{\int_{\mR^{k+1} \times S^{n-k-1}}  
|\psi_i|^2 \,dv^{G_{\ka_i}
} } \\
&\leq 
\la_i^2 m_i^{\frac{4}{n-1}}.
\end{split}
\end{equation*}
Since $\la_i$ tends to zero this proves (\ref{sup=infty}).
Restricting to subsequence we can assume that 
$\lim_{i \to \infty} \ka_i$ exists and we denote this limit by 
$\ka \in [-1,1]$. We apply Lemma \ref{diffeom} with $\al = 1/i$, 
$(V, \ga_\al) = (\mR^{k+1} \times S^{n-k-1}, 
G_{\ka_i})$, 
$(V, \ga_0) = (\mR^{k+1} \times S^{n-k-1}, G_\ka)$,
$p_\al = p = P$, and $b_\al = m_i^\frac{2}{n-1}$. For $r>0$ we
obtain a diffeomorphism 
$$
\Th_i : 
B^n(r)
\to
B^{G_{\ka_i}
} (P, m_i^\frac{2}{n-1}r) 
$$
such that $\Th_i^* (m_i^\frac{4}{n-1} 
(G_{\ka_i}))$ tends to the
Euclidean metric $\xi^n$ on $B^n(r)$. Proceeding as in
Subcase~I.1 of Theorem \ref{main} we construct a non-zero spinor
$\psi$ belonging to $L^{\frac{2n}{n-1}}(\mR^n)$ such that
$$
D^{\xi^n} \psi 
= 
\lim_{i \to \infty} \la_i |\psi|^\frac{2}{n-1} \psi 
= 
0.
$$
Again as in Subcase~I.1 of Theorem \ref{main} we get 
$0 \geq \lamin(S^n, \si^n)$, which is false. This proves Theorem
\ref{lamin_hyperb}.



\begin{thebibliography}{10}

\bibitem{ammann:p03}
B.~Ammann, \emph{The smallest {D}irac eigenvalue in a spin-conformal class and
  cmc-immersions}, {P}reprint, 2003.

\bibitem{ammann:03}
\bysame, \emph{A spin-conformal lower bound of the first positive {D}irac
  eigenvalue}, {D}iff. {G}eom. {A}ppl. \textbf{18} (2003), 21--32.

\bibitem{ammann:habil}
\bysame, \emph{A variational problem in conformal spin geometry},
  Habilitationsschrift, Universit\"at Hamburg, 2003.

\bibitem{ammann.dahl.humbert:p06}
B.~Ammann, M.~Dahl, and E.~Humbert, \emph{Surgery and harmonic spinors},
  Preprint, {A}r{X}iv math.{DG}/0606224, 2006, to appear in
  Adv. Math. \textbf{220} (2009) no. 2, 523--539.

\bibitem{ammann.dahl.humbert:p08a}
\bysame, \emph{Smooth {Y}amabe invariant and surgery}, Preprint, {A}r{X}iv
  0804.1418, 2008.

\bibitem{ammann.grosjean.humbert.morel:p07}
B.~Ammann, J.~F. Grosjean, E.~Humbert, and B.~Morel, \emph{A spinorial analogue
  of {A}ubin's inequality}, Math. Z. \textbf{260} (2008), 127--151.

\bibitem{ammann.humbert:p06a}
B.~Ammann and E.~Humbert, \emph{The spinorial $\tau$-invariant and
  0-dimensional surgery}, 
J. reine angew. Math., \textbf{624} (2008), 27--50.

\bibitem{baum:81}
H.~Baum, \emph{{S}pin-{S}trukturen und {D}irac-{O}peratoren {\"u}ber
  pseudoriemannschen {M}annigfaltigkeiten}, Teubner Verlag, 1981.

\bibitem{bourguignon.gauduchon:92}
J.-P. Bourguignon and P.~Gauduchon, \emph{Spineurs, op{\'e}rateurs de {D}irac
  et variations de m{\'e}triques}, Comm. Math. Phys. \textbf{144} (1992),
  581--599.

\bibitem{friedrich:00}
T.~Friedrich, \emph{{D}irac {O}perators in {R}iemannian {G}eometry}, {G}raduate
  {S}tudies in {M}athematics 25, AMS, Providence, Rhode Island, 2000.

\bibitem{gilbarg.trudinger:77}
D.~Gilbarg and N.~Trudinger, \emph{Elliptic partial differential equations of
  second order}, Grundlehren der mathematischen Wissenschaften, no. 224,
  Springer-Verlag, 1977.

\bibitem{hijazi:86}
O.~Hijazi, \emph{A conformal lower bound for the smallest eigenvalue of the
  {D}irac operator and {K}illing spinors}, Comm. Math. Phys. \textbf{104}
  (1986), 151--162.

\bibitem{hijazi:91}
\bysame, \emph{Premi\`ere valeur propre de l'op\'erateur de {D}irac et nombre
  de {Y}amabe}, C. R. Acad. Sci. Paris \textbf{t.\ 313, S\'erie I} (1991),
  865--868.

\bibitem{hijazi:01}
\bysame, \emph{{Spectral properties of the {D}irac operator and geometrical
  structures.}}, {Ocampo, Hernan (ed.) et al., Geometric methods for quantum
  field theory. Proceedings of the summer school, Villa de Leyva, Colombia,
  July 12-30, 1999. Singapore: World Scientific. 116--169 }, 2001.

\bibitem{hitchin:74}
N.~Hitchin, \emph{Harmonic spinors}, Adv. Math. \textbf{14} (1974), 1--55.

\bibitem{kirby:89}
R.~C. Kirby, \emph{The topology of {$4$}-manifolds}, Lecture Notes in
  Mathematics, vol. 1374, Springer-Verlag, Berlin, 1989.

\bibitem{kobayashi:87}
O.~Kobayashi, \emph{Scalar curvature of a metric with unit volume}, Math. Ann.
  \textbf{279} (1987), no.~2, 253--265.

\bibitem{kosinski:93}
A.~A. Kosinski, \emph{Differential manifolds}, Pure and Applied Mathematics,
  vol. 138, Academic Press Inc., Boston, MA, 1993.

\bibitem{lawson.michelsohn:89}
H.~B. Lawson and M.-L. Michelsohn, \emph{Spin geometry}, Princeton University
  Press, Princeton, 1989.

\bibitem{lott:86}
J.~Lott, \emph{Eigenvalue bounds for the {D}irac operator}, Pacific J. of Math.
  \textbf{125} (1986), 117--126.

\bibitem{schoen:89}
R.~Schoen, \emph{{Variational theory for the total scalar curvature functional
  for Riemannian metrics and related topics}}, {Topics in calculus of
  variations, Lect. 2nd Sess., Montecatini/Italy 1987, Lect. Notes Math. 1365,
  120-154}, 1989.

\bibitem{stong:68}
R.~E. Stong, \emph{Notes on cobordism theory}, Mathematical notes, Princeton
  University Press, Princeton, N.J., 1968.

\end{thebibliography}

\providecommand{\bysame}{\leavevmode\hbox to3em{\hrulefill}\thinspace}
\providecommand{\MR}{\relax\ifhmode\unskip\space\fi MR }
\providecommand{\MRhref}[2]{%
  \href{http://www.ams.org/mathscinet-getitem?mr=#1}{#2}
}
\providecommand{\href}[2]{#2}

\end{document}